\theoremstyle{plain} 
	\newtheorem{Thm}{Theorem}[section]
	\newtheorem{Cor}[Thm]{Corollary}
	\newtheorem{Lem}[Thm]{Lemma}
	\newtheorem{Prop}[Thm]{Proposition}
\theoremstyle{definition}
        \newtheorem{Def}[Thm]{Definition}
	\newtheorem{Rem}[Thm]{Remark}
\numberwithin{equation}{section}
\newcommand{\R}{\mathbb{R}}
\newcommand{\Sbb}{\mathbb{S}}
\newcommand{\bary}{\mathrm{bar}}
\newcommand{\inn}[2]{\left\langle#1, #2\right\rangle}
\newcommand{\Ent}{\mathrm{Ent}}
\newcommand{\Lc}{\mathcal{L}}
\newcommand{\notll}{\centernot\ll}
\newcommand{\Int}{\mathrm{int}}
\newcommand{\argmin}{\mathop\mathrm{arg~min}\limits}
\begin{document}

\title{Symmetrized Talagrand Inequalities on Euclidean Spaces}
\author{Hiroshi Tsuji\footnote{Department of Mathematics, Osaka University, Osaka 560-0043, Japan (u302167i@ecs.osaka-u.ac.jp)}}
\date{}
\maketitle

\begin{abstract}
In this paper, we study the symmetrized Talagrand inequality that was proved by Fathi and has a connection with the Blaschke-Santal\'{o} inequality in convex geometry. 
As corollaries of our results, we have several refined functional inequalities under some conditions.
We also give an alternative proof of Fathi's symmetrized Talagrand inequality on the real line and some applications.   
\end{abstract}

\tableofcontents

\section{Introduction}

The Talagrand inequality, which is also called the Talagrand transportation inequality, is as follows: 
If $m = e^{-V}\mathcal{L}^n$ is a probability measure on $\R^n$ with $\nabla^2V\geq \kappa$ for some $\kappa>0$, 
and $\mu \in P_2(\R^n)$, then $W_2^2(\mu,m)\leq 2\Ent_m(\mu)/\kappa$ holds, 
where $\mathcal{L}^n$ is the Lebesgue measure on $\R^n$, $P_2(\R^n)$ is the set of all probability measures on $\R^n$ with finite second moment, 
$W_2$ is the Wasserstein distance, and $\Ent_{m}$ is the relative entropy (or the Kullback-–Leibler distance) with respect to $m$.
More generally, it is known that the Talagrand inequality holds on metric measure spaces with similar conditions above, and 
there are many studies on refinements of the Talagrand inequality and relations with logarithmic Sobolev inequalities and Poincar\'{e} inequalities (\cite{V2}). 
This paper is motivated by Fathi's following result.  

\begin{Thm}[\cite{F}]\label{STI}
Let $\mu, \nu \in P_2(\R^n)$. 
\begin{description}
\item{(1)} Let $m = e^{-V}\mathcal{L}^n$ be a probability measure on $\R^n$ such that $V \in C^{\infty}(\R^n)$ is even and  $\kappa$-convex for some $\kappa>0$. 
If $\nu$ is symmetric (i.e. its density with respect to $\Lc^n$ is even), then it holds that 
\begin{align*}
\frac{1}{2}W_2^2(\mu, \nu)\leq \frac{1}{\kappa}(\Ent_{m}(\mu)+\Ent_{m}(\nu)).
\end{align*}
\item{(2)} Let $m = \gamma_n$ be the $n$-dimensional standard Gaussian measure. If $\bary(\nu):=\int_{\R^n}x ~d\nu(x)=0$, then it holds that 
\begin{align*}
\frac{1}{2}W_2^2(\mu, \nu)\leq \Ent_{\gamma_n}(\mu)+\Ent_{\gamma_n}(\nu).
\end{align*}
Moreover, the equality holds in (2) if and only if there exist some positive definite symmetric matrix $A\in \R^{n\times n}$ and some $a\in \R$ such that $\mu$ is the Gaussian measure 
whose center is $a$ and covariance matrix is $A$, and $\nu$ is the Gaussian measure whose center is $0$ and covariance matrix is $A^{-1}$. 
\end{description}
\end{Thm}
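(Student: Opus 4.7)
The plan is to establish (2) first, using the Kantorovich dual formulation of $W_2^2$ in combination with the functional Blaschke--Santal\'o inequality, and then to deduce (1) from (2) by Caffarelli's contraction theorem.

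For (2), the starting point is Kantorovich--Rubinstein duality for the quadratic cost,
\[
\tfrac{1}{2}W_2^2(\mu,\nu) = \sup\Bigl\{\int\varphi\, d\mu + \int\psi\, d\nu : \varphi(x)+\psi(y)\leq \tfrac{1}{2}|x-y|^2\Bigr\}.
\]
For any admissible pair the Gibbs variational principle yields $\int\varphi\, d\mu \leq \Ent_{\gamma_n}(\mu) + \log\int e^\varphi\, d\gamma_n$, and analogously for $\psi$ and $\nu$. Substituting $F(x):=e^{\varphi(x)-|x|^2/2}$ and $G(y):=e^{\psi(y)-|y|^2/2}$ turns the admissibility condition into $F(x)G(y)\leq e^{-\langle x,y\rangle}$ and makes the sum of the two log terms equal to $\log(\int F\cdot\int G) - n\log(2\pi)$, so the desired inequality reduces to $\int F\cdot\int G \leq (2\pi)^n$, which is exactly the functional Blaschke--Santal\'o inequality under a centering hypothesis.

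The centering is produced from $\bary(\nu)=0$ as follows. A direct computation shows that the defect $\tfrac{1}{2}W_2^2(\mu,\nu)-\Ent_{\gamma_n}(\mu)-\Ent_{\gamma_n}(\nu)$ is invariant under translations of $\mu$ whenever $\bary(\nu)=0$, so one may assume in addition that $\bary(\mu)=0$. With both measures centered, one can choose optimal Kantorovich potentials so that the associated log-concave functions $F,G$ satisfy the centering required by the functional Blaschke--Santal\'o inequality in the form established by Fradelizi--Meyer and Lehec, yielding $\int F\cdot\int G\leq(2\pi)^n$ and hence (2). For the equality case, equality in functional Blaschke--Santal\'o forces $F,G$ to be Gaussian densities with mutually inverse covariance matrices, so $\varphi,\psi$ are quadratic, the Brenier map is affine, and $\mu,\nu$ are Gaussians with covariances $A$ and $A^{-1}$; the translation used to center $\mu$ reinstates the shift $a$.

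For (1), Caffarelli's contraction theorem applies because $V$ is $\kappa$-convex: the Brenier map $T:\gamma_n\to m$ is $\kappa^{-1/2}$-Lipschitz. Since $V$ is even, $\gamma_n$ and $m$ are both symmetric, and uniqueness of the Brenier potential forces $T$ to be odd. Setting $\tilde\mu:=(T^{-1})_{\#}\mu$ and $\tilde\nu:=(T^{-1})_{\#}\nu$, bijective pushforward preserves relative entropy, so $\Ent_{\gamma_n}(\tilde\mu)=\Ent_m(\mu)$ and similarly for $\nu$; moreover $T$ odd together with $\nu$ symmetric yields $\tilde\nu$ symmetric, hence $\bary(\tilde\nu)=0$. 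Applying (2) to $(\tilde\mu,\tilde\nu)$ and combining with the Lipschitz bound $W_2^2(\mu,\nu)\leq\kappa^{-1}W_2^2(\tilde\mu,\tilde\nu)$ gives (1).

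The main obstacle is the centering step in the proof of (2): one must rigorously convert the hypothesis $\bary(\nu)=0$ (augmented by the translation reduction $\bary(\mu)=0$) into the centering condition demanded by functional Blaschke--Santal\'o on the log-concave functions $F,G$ produced from the optimal Kantorovich pair, and verify that this can be done without sacrificing the supremum. The bookkeeping in the equality case, tracing the rigidity of functional Blaschke--Santal\'o back through Kantorovich duality to the explicit Gaussian form of $\mu$ and $\nu$, also demands some care.
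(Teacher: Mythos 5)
Your route to part (2) --- Kantorovich duality plus the Donsker--Varadhan bound $\int\varphi\,d\mu\leq\Ent_{\gamma_n}(\mu)+\log\int e^{\varphi}\,d\gamma_n$, reducing the claim to $\int F\cdot\int G\leq(2\pi)^n$ --- is essentially Fathi's original argument, the one the paper explicitly sets aside in Subsection 5.3. The hard point, which you correctly flag as the ``main obstacle'' but do not close, is the centering: Lehec's functional Blaschke--Santal\'o inequality requires $\int x\,F(x)\,dx=0$, i.e.\ that the measure proportional to $e^{\varphi}\gamma_n$ is barycentered at the origin, whereas the hypothesis $\bary(\nu)=0$ concerns the target $\nu$, which is in general \emph{not} proportional to $e^{\psi}\gamma_n$, and the reduction $\bary(\mu)=0$ centers $\mu$, not $e^{\varphi}\gamma_n$. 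The normalization freedom in the Kantorovich pair (adding a constant to $\varphi$, subtracting it from $\psi$) does not touch this barycenter, so ``one can choose optimal Kantorovich potentials so that $F,G$ satisfy the centering'' is an unproved claim, and it is precisely the crux. Fathi closes it by invoking the moment-measure theorem of Cordero-Erausquin and Klartag to build a potential whose associated exponential measure is centered; without that ingredient the argument stalls exactly where you say it might. By contrast, your deduction of (1) from (2) is correct and clean: Caffarelli's theorem (Theorem \ref{CCT} with $\kappa_0=1$, $\kappa_1=\kappa$) gives the $\kappa^{-1/2}$-Lipschitz bound, uniqueness of the Brenier map between symmetric measures forces $T$ odd, pushforward by the diffeomorphism $T^{-1}$ preserves relative entropy, and $\tilde\nu$ inherits symmetry, hence $\bary(\tilde\nu)=0$.

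You should also be aware that the paper's own proof of Theorem \ref{STI} is carried out only for $n=1$ and runs in the opposite logical direction. It writes the defect $\Ent_m(\mu)+\Ent_m(\nu)-\tfrac{\kappa}{2}W_2^2(\mu,\nu)$ via the Monge--Amp\`ere equation as an integral functional of the two monotone optimal maps $T_1,T_2$ from $m$, and proves nonnegativity by the elementary one-dimensional Lemma \ref{STI1}: introduce $h:=\sqrt{f(g-\alpha)}$, use $f'g'\leq(h')^2$, integrate by parts, and apply $x-1-\log x\geq 0$. No Blaschke--Santal\'o input is used; indeed the paper derives Blaschke--Santal\'o as a \emph{corollary} of this approach in Section 6. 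Your proposal therefore reverses the paper's logical flow, and in the one place where that reversal must pay a price --- the centering in (2) --- the bill is left unpaid.
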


Notice that Theorem \ref{STI}(1) does not include (2).
When $m=\nu$ in (1), we recover the classical Talagrand inequality, and hence 
Theorem \ref{STI} is a refinement of the classical Talagrand inequality. 
Using Fathi's paper as reference, we call this type inequality a {\it symmetrized Talagrand inequality}. 
Fathi proved the symmetrized Talagrand inequality by using optimal transport theory and convex geometry. 
Moreover, he pointed out that the symmetrized Talagrand inequality for Gaussian measures is related to the functional Blaschke-Santal\'{o} inequality, 
which is well-known and important in convex geometry. 

We consider refinements and extensions of the symmetrized Talagrand inequality in this paper. 
In general, the Talagrand inequality follows from the convexity of the relative entropy (Theorem \ref{TI}), and  
our idea is to strengthen the convexity under certain conditions as follows.   
\begin{Thm}\label{MT1}
Let $\mu_i = e^{-V_i}\mathcal{L}^n \in P_2(\R^n)$ $(i = 0,1)$ be probability measures with 
$V_i \in C^{\infty}(\R^n)$ $(i=0, 1)$.
We assume that $\mu_0$ and $\mu_1$ satisfy the following three conditions: 
\begin{description}
\item{(i)} $\bary(\mu_0) = \bary(\mu_1)$.
\item{(ii)} $\nabla^2V_0 \leq \kappa_0$ and $\nabla^2 V_1 \geq  \kappa_1$ for some $\kappa_i>0$ $(i=0,1)$.
\item{(iii)} $\mu_0$ satisfies the Poincar\'{e} inequality with a constant $C_{\mu_0}>0$ in the sense that, 
for any $f\in H^1(\R^n,\mu_0)$ with $\int_{\R^n}f ~d\mu_0 = 0$, it holds that 
\[
C_{\mu_0}\int_{\R^n}f^2~d\mu_0 \leq \int_{\R^n}\|\nabla f\|_2^2~d\mu_0.
\] 
\end{description}
Then $\Ent$, which is the relative entropy with respect to $\Lc^n$, is $C_{\mu_0}\min\left\{1,\kappa_1/\kappa_0\right\}$-convex along $(\mu_t)_{t\in [0, 1]}$, where 
$(\mu_t)_{t\in [0, 1]}$ is the geodesic from $\mu_0$ to $\mu_1$ in $(P_2(\R^n), W_2)$, and 
$H^1(\R^n,\mu_0)$ is the Sobolev space with respect to the probability measure $\mu_0$.  
\end{Thm}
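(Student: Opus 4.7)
The plan is to show that
\begin{equation*}
\frac{d^2}{dt^2}\Ent(\mu_t) \geq C_{\mu_0}\min\!\left(1, \frac{\kappa_1}{\kappa_0}\right) W_2^2(\mu_0,\mu_1) \qquad\text{for every } t \in [0,1],
\end{equation*}
which implies the asserted convexity. To set up, let $T = \nabla\varphi \colon \mu_0 \to \mu_1$ be the Brenier map, put $\psi := \varphi - \tfrac12\|\cdot\|_2^2$ so that $\nabla\psi = T - \mathrm{id}$ and $D^2\psi = D^2\varphi - I$, and denote by $\Lambda_1(x),\dots,\Lambda_n(x)$ the eigenvalues of $D^2\psi(x)$. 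Then the displacement interpolation is $\mu_t = (T_t)_\#\mu_0$ with $T_t = \mathrm{id} + t\nabla\psi$, and a direct Monge--Amp\`ere computation gives
\begin{equation*}
\frac{d^2}{dt^2}\Ent(\mu_t) = \int_{\R^n}\sum_{i=1}^n \frac{\Lambda_i(x)^2}{(1+t\Lambda_i(x))^2}\,d\mu_0(x).
\end{equation*}

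The crucial step is to use condition (ii) to prove the Caffarelli-type contraction bound $\lambda_{\max}(D^2\varphi) \leq \sqrt{\kappa_0/\kappa_1}$ pointwise on $\R^n$. I would argue via the maximum principle: supposing the supremum is attained at some $x_0 \in \R^n$ with associated unit eigenvector $e_0$, the scalar function $f(x) := e_0^\top D^2\varphi(x)\, e_0$ satisfies $\nabla f(x_0) = 0$ and $D^2 f(x_0) \leq 0$. Twice differentiating the Monge--Amp\`ere identity $V_0 = V_1\circ\nabla\varphi - \log\det D^2\varphi$ along $e_0$ at $x_0$ and dropping the nonnegative terms that arise yields $\kappa_0 \geq \partial_{e_0 e_0}V_0(x_0) \geq \kappa_1 f(x_0)^2$, whence $f(x_0) \leq \sqrt{\kappa_0/\kappa_1}$. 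Since $DT_t = (1-t)I + tD^2\varphi$, this implies $0 < 1 + t\Lambda_i(x) \leq \max(1,\sqrt{\kappa_0/\kappa_1})$ on $[0,1]\times\R^n$, and therefore pointwise
\begin{equation*}
\sum_{i=1}^n \frac{\Lambda_i^2}{(1+t\Lambda_i)^2} \geq \min\!\left(1,\frac{\kappa_1}{\kappa_0}\right)\|D^2\psi\|_{\mathrm{HS}}^2.
\end{equation*}

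To finish, I would use conditions (i) and (iii) to bound $\int \|D^2\psi\|_{\mathrm{HS}}^2\,d\mu_0$ below by $C_{\mu_0}W_2^2(\mu_0,\mu_1)$. Condition (i) gives $\int \nabla\psi\,d\mu_0 = \bary(\mu_1) - \bary(\mu_0) = 0$, so each partial $\partial_i\psi$ is $\mu_0$-centered; applying the Poincar\'e inequality of (iii) to each $\partial_i\psi$ and summing in $i$ gives $C_{\mu_0}\int \|\nabla\psi\|_2^2\,d\mu_0 \leq \int \|D^2\psi\|_{\mathrm{HS}}^2\,d\mu_0$. Combining this with the identity $\int \|\nabla\psi\|_2^2\,d\mu_0 = W_2^2(\mu_0,\mu_1)$ and the previous two displays produces the required inequality.

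The hard part will be the Caffarelli-type contraction bound in the second step: one needs $\lambda_{\max}(D^2\varphi)$ to genuinely attain its supremum and the Brenier potential to be smooth enough to legitimize the pointwise second-order computation. This typically demands an approximation/regularization argument on $V_0, V_1$ together with Caffarelli's interior regularity theory for Monge--Amp\`ere equations; controlling the behavior of $\varphi$ at infinity under only the upper Hessian bound $\nabla^2 V_0 \leq \kappa_0$ (and not a lower one) is the delicate point.
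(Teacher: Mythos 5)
Your proof is correct and follows essentially the same route as the paper's: express the entropy along the displacement interpolation via the Monge--Amp\`ere equation, bound the concavity deficit of $\log\det$ in terms of $\|\nabla T - I\|_{HS}^2$ with a prefactor controlled by Caffarelli's contraction bound $\lambda_{\max}(\nabla^2\varphi)\le\sqrt{\kappa_0/\kappa_1}$, and then convert $\int\|\nabla^2\psi\|_{HS}^2\,d\mu_0$ into $C_{\mu_0}W_2^2(\mu_0,\mu_1)$ via the barycenter condition and the Poincar\'e inequality applied componentwise. The only stylistic difference is that you work with $\frac{d^2}{dt^2}\Ent(\mu_t)$ (which requires justifying differentiation under the integral), whereas the paper uses an integrated quantitative concavity lemma for $\log\det$ (its Lemma~5.1) applied to $A=I_n$, $B=\nabla T(x)$, which sidesteps that regularity issue; both yield the same $\min(1,\kappa_1/\kappa_0)$ factor, and the paper likewise isolates Caffarelli's contraction as a separate theorem.
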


\begin{Thm}\label{MT2}
Let $\mu_i = e^{-V_i}\mathcal{L}^n \in P_2(\R^n)$ $(i = 0,1)$ be probability measures with 
$V_i \in C^{\infty}(\R^n)$ $(i=0, 1)$, and let $m = e^{-V}\mathcal{L}^n$ be a probability measure on $\R^n$ such that $V \in C^{\infty}(\R^n)$ satisfies $\kappa\leq V\leq \kappa'$ for some $\kappa, \kappa'>0$. 
We assume that $\mu_0$ and $\mu_1$ satisfy the following two conditions: 
\begin{description}
\item{(i)} $\bary(\mu_0) = \bary(\mu_1)$.
\item{(ii)} $\nabla^2V_0 \geq \kappa_0$ and $\nabla^2 V_1 \geq  \kappa_1$ for some $\kappa_i>0$ $(i=0,1)$.
\end{description} 
Then $\Ent_{m}$ is $\kappa(1 + \min\{\kappa_0/\kappa', \kappa_1/\kappa'\})$-convex along  $(\mu_t)_{t\in [0, 1]}$, 
where $(\mu_t)_{t\in [0, 1]}$ is the  generalized geodesic from $\mu_0$ to $\mu_1$ with the base $m$ in $(P_2(\R^n), W_2)$.
\end{Thm}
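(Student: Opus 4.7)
The plan is to parametrize the generalized geodesic by Brenier maps from $m$ and reduce $\Ent_m(\mu_t)$ to a sum of $t$-dependent pieces whose second derivatives can be estimated separately. Writing $T_i=\nabla\varphi_i$ for the Brenier map from $m$ to $\mu_i$ and setting $T_t:=(1-t)T_0+tT_1$, so that $\mu_t=(T_t)_{\#}m$ and $\int\|T_1-T_0\|_2^2\,dm$ is the squared length of the generalized geodesic, the Monge-Amp\`ere identity $\rho_t(T_t(x))\det\nabla T_t(x)=e^{-V(x)}$ (for the Lebesgue density $\rho_t$ of $\mu_t$) together with the standard change of variables yields
\[
\Ent_m(\mu_t) \;=\; \int V(T_t)\,dm \;-\; \int\log\det\nabla T_t\,dm \;+\; C_0, \qquad C_0:=-\int V\,dm.
\]
From $\nabla^2 V\ge\kappa I$, the first term is immediately $\kappa$-convex in $t$:
\[
\frac{d^2}{dt^2}\int V(T_t)\,dm \;=\; \int\langle\nabla^2V(T_t)(T_1-T_0),T_1-T_0\rangle\,dm \;\ge\; \kappa\int\|T_1-T_0\|_2^2\,dm.
\]

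For the log-determinant term, writing $M_i:=\nabla T_i\succeq 0$ (symmetric because each $\varphi_i$ is convex) and $M_t:=(1-t)M_0+tM_1$, a direct computation gives the nonnegative Hessian
\[
-\frac{d^2}{dt^2}\int\log\det\nabla T_t\,dm \;=\; \int\operatorname{tr}\bigl[(M_t^{-1}(M_1-M_0))^2\bigr]\,dm,
\]
and the whole task reduces to bounding this integral from below by $\kappa\min\{\kappa_0/\kappa',\kappa_1/\kappa'\}\int\|T_1-T_0\|_2^2\,dm$. I would combine two classical ingredients. First, the Caffarelli contraction theorem (in its extension appropriate to $\nabla^2V\le\kappa' I$ on the source and $\nabla^2V_i\ge\kappa_i I$ on the targets) yields the pointwise operator-norm bound $M_i\preceq\sqrt{\kappa'/\kappa_i}\,I$, hence $M_t^{-1}\succeq\sqrt{\min\{\kappa_0,\kappa_1\}/\kappa'}\,I$; the elementary identity $\operatorname{tr}[(PB)^2]\ge\lambda_{\min}(P)^2\operatorname{tr}(B^2)$ for symmetric $P\succeq 0$ and symmetric $B$ (obtained by diagonalizing $P$) applied with $P=M_t^{-1}$, $B=M_1-M_0$ delivers
\[
\int\operatorname{tr}[(M_t^{-1}(M_1-M_0))^2]\,dm \;\ge\; \min\!\left\{\frac{\kappa_0}{\kappa'},\frac{\kappa_1}{\kappa'}\right\}\int\operatorname{tr}[(M_1-M_0)^2]\,dm.
\]
Second, assumption (i) forces $\int(T_1-T_0)\,dm=\bary(\mu_1)-\bary(\mu_0)=0$, so the Brascamp-Lieb inequality for the $\kappa$-log-concave base $m$, applied componentwise to the centered vector field $T_1-T_0$, gives the Poincar\'e-type bound $\kappa\int\|T_1-T_0\|_2^2\,dm\le\int\operatorname{tr}[(M_1-M_0)^2]\,dm$ (using that $M_1-M_0$ is symmetric, so its Hilbert-Schmidt norm squared equals $\operatorname{tr}[(M_1-M_0)^2]$). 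Chaining these two estimates and adding the potential contribution yields
\[
\frac{d^2}{dt^2}\Ent_m(\mu_t) \;\ge\; \kappa\Bigl(1+\min\{\kappa_0/\kappa',\kappa_1/\kappa'\}\Bigr)\int\|T_1-T_0\|_2^2\,dm,
\]
which is precisely the claimed convexity along the generalized geodesic.

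The main technical obstacle is the Caffarelli-type Lipschitz estimate on $T_i$: with only the upper Hessian bound $\nabla^2V\le\kappa' I$ assumed on the base $m$, one has to invoke an extension of Caffarelli's original Gaussian-source theorem (for instance along the lines of Kim-Milman or Kolesnikov), and also to supply enough regularity of the Brenier potentials $\varphi_i$ to legitimize differentiating the $\log\det$-term twice in $t$. Everything else (the Monge-Amp\`ere change of variables, the second derivative of $\log\det$ at $M_t$, the diagonalization inequality, and the componentwise Brascamp-Lieb estimate) is essentially routine.
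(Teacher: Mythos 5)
Your proof is correct and follows essentially the same route as the paper: decompose $\Ent_m(\mu_t)$ via the Monge--Amp\`ere change of variables into a potential term (estimated by the $\kappa$-convexity of $V$) and a $\log\det$ term, apply Caffarelli's contraction theorem (as extended in the paper's Theorem 4.1) to bound $\lambda_{\max}(\nabla T_i)$, and apply the Poincar\'e inequality for $m$ componentwise to the centered field $T_1-T_0$. The only difference is cosmetic: you quantify the strong concavity of $\log\det$ by differentiating twice in $t$ and using the spectral inequality $\operatorname{tr}[(PB)^2]\ge\lambda_{\min}(P)^2\operatorname{tr}(B^2)$, whereas the paper invokes the Courtade--Fathi--Pananjady lemma directly, which is precisely the integrated form of your pointwise second-derivative bound.
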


In general, a function $f$ on a metric space $(X,d)$ is said to be $\kappa$-convex for some $\kappa\in\R$ 
if 
\[
f((1-t)x +ty)\leq (1-t)f(x) + tf(y) -\kappa t(1-t)d(x,y)^2/2 
\] 
for any $x, y\in X$ and any $t\in[0,1]$.  
In particular, when $X=\R^n$ and $f\in C^{\infty}(\R^n)$, $f$ is $\kappa$-convex for some $\kappa \in \R$ if and only if $\nabla^2f\geq \kappa$. 
A generalized geodesic in Theorem \ref{MT2} is defined in Subsection \ref{GG}. 
Theorems \ref{MT1} and \ref{MT2} yield the following symmetrized Talagrand inequalities, respectively. 

\begin{Cor}\label{MC1}
Let $\mu_i$ $(i = 0,1)$ be as in Theorem \ref{MT1}, and 
$m = e^{-V}\mathcal{L}^n$ be a probability measure on $\R^n$ such that $V \in C^{\infty}(\R^n)$ is $\kappa$-convex for some $\kappa>0$. 
Then it holds that 
\[
\frac{1}{2}W_2^2(\mu_0, \mu_1)\leq \frac{2}{\kappa + C_{\mu_0}\min\left\{1, \frac{\kappa_1}{\kappa_0}\right\}}(\Ent_m(\mu_0)+\Ent_m(\mu_1)).
\]
In particular, if there exists $\kappa_2>0$ such that $\kappa_2 \leq \nabla^2 V_0$ $(\leq\kappa_0)$, then 
\[
\frac{1}{2}W_2^2(\mu_0, \mu_1)\leq \frac{2}{\kappa + \kappa_2\min\left\{1, \frac{\kappa_1}{\kappa_0}\right\}}(\Ent_m(\mu_0)+\Ent_m(\mu_1)).
\]
Therefore if $\kappa, \kappa_i$ $(i=0,1,2)$ satisfy $\kappa\leq\kappa_2 \leq \kappa_0\leq\kappa_1$, then 
\[
\frac{1}{2}W_2^2(\mu_0, \mu_1)\leq \frac{1}{\kappa}(\Ent_m(\mu_0)+\Ent_m(\mu_1)).
\]
\end{Cor}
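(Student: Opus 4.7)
The plan is to combine Theorem~\ref{MT1} with the displacement convexity of the potential functional and then apply the standard ``midpoint trick'' that converts $\lambda$-convexity of $\Ent_m$ along a $W_2$-geodesic into a Talagrand-type bound. First I write $\Ent_m(\mu) = \Ent(\mu) + \int_{\R^n}V\, d\mu$ for any $\mu$ absolutely continuous with respect to $\Lc^n$. Along the $W_2$-geodesic $(\mu_t)_{t\in[0,1]}$ from $\mu_0$ to $\mu_1$, which has the explicit form $\mu_t = ((1-t)\,\mathrm{id}+tT)_{\#}\mu_0$ with $T$ the Brenier map from $\mu_0$ to $\mu_1$, McCann's displacement convexity shows that $\mu \mapsto \int V\, d\mu$ is $\kappa$-convex because $V$ is $\kappa$-convex (the function $t\mapsto \int V((1-t)x + tT(x))\, d\mu_0(x)$ is manifestly $\kappa$-convex in $t$). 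Together with Theorem~\ref{MT1} this gives that $\Ent_m$ is $\lambda$-convex along $(\mu_t)$ with $\lambda := \kappa + C_{\mu_0}\min\{1, \kappa_1/\kappa_0\}$.

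Next I evaluate the $\lambda$-convexity inequality at the midpoint $t=1/2$ and use $\Ent_m(\mu_{1/2})\geq 0$ (which holds by Jensen's inequality, since $\mu_{1/2}$ is itself absolutely continuous) to discard the left-hand side:
\[
0 \;\leq\; \Ent_m(\mu_{1/2}) \;\leq\; \tfrac{1}{2}\Ent_m(\mu_0) + \tfrac{1}{2}\Ent_m(\mu_1) - \tfrac{\lambda}{8}W_2^2(\mu_0,\mu_1).
\]
Rearranging yields $\frac{1}{2}W_2^2(\mu_0,\mu_1) \leq \frac{2}{\lambda}(\Ent_m(\mu_0)+\Ent_m(\mu_1))$, which is the first inequality of the corollary.

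For the ``in particular'' refinement, the assumption $\nabla^2 V_0 \geq \kappa_2$ yields $C_{\mu_0} \geq \kappa_2$ by the Brascamp--Lieb inequality (equivalently the Bakry--\'Emery $CD(\kappa_2,\infty)$ criterion), and monotonicity in the denominator gives the second bound. For the final ``therefore'' clause, the ordering $\kappa_0 \leq \kappa_1$ forces $\min\{1, \kappa_1/\kappa_0\}=1$, while $\kappa \leq \kappa_2$ gives $\kappa + \kappa_2 \geq 2\kappa$, and hence $2/(\kappa+\kappa_2) \leq 1/\kappa$.

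The only substantive input is Theorem~\ref{MT1}; once that is granted, the rest is the well-known extraction of a Talagrand-type inequality from $\lambda$-displacement convexity of the entropy, and I expect no essential obstacle. The one point that must be checked carefully is that the $W_2$-geodesic appearing in Theorem~\ref{MT1} is exactly the one along which $\int V\, d\mu$ is $\kappa$-convex, but both are the standard geodesic given by the Brenier map between $\mu_0$ and $\mu_1$, so the two convexity estimates add without any extra work.
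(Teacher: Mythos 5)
Your proof is correct and follows essentially the same route as the paper: the paper first establishes (as its Corollary~\ref{C1}) the $\bigl(\kappa + C_{\mu_0}\min\{1,\kappa_1/\kappa_0\}\bigr)$-convexity of $\Ent_m$ by decomposing $\Ent_m = \Ent + \int V\,d\mu$ exactly as you do, then evaluates at $t=1/2$ and drops $\Ent_m(\mu_{1/2})\ge 0$. The only cosmetic difference is that you invoke Brascamp--Lieb/Bakry--\'Emery to get $C_{\mu_0}\ge\kappa_2$, while the paper cites its own Theorem~\ref{PI} for the same fact.
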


\begin{Cor}\label{MC2}
Let $\mu_i$ $(i = 0,1)$ and $m$ be as in Theorem \ref{MT2}.
Then 
\[
\frac{1}{2}W_2^2(\mu_0, \mu_1)\leq \frac{1}{\kappa}\cdot\frac{2}{1+\min\left\{\frac{\kappa_0}{\kappa'},\frac{\kappa_1}{\kappa'}\right\}}(\Ent_{m}(\mu_0)+\Ent_m(\mu_1)).
\]
In particular, if $\kappa_0$ and $\kappa_1$ satisfy $\kappa_0, \kappa_1 \geq \kappa'$, then 
\[
\frac{1}{2}W_2^2(\mu_0, \mu_1)\leq \frac{1}{\kappa}(\Ent_{m}(\mu_0)+\Ent_m(\mu_1)).
\]
\end{Cor}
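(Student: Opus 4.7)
The plan is to deduce the inequality directly from the $\lambda$-convexity of $\Ent_m$ along generalized geodesics provided by Theorem~\ref{MT2}. Write $\lambda := \kappa\bigl(1+\min\{\kappa_0/\kappa',\kappa_1/\kappa'\}\bigr)$ and let $(\mu_t)_{t\in[0,1]}$ denote the generalized geodesic from $\mu_0$ to $\mu_1$ with base $m$. Theorem~\ref{MT2} then gives, for every $t\in[0,1]$,
\[
\Ent_m(\mu_t) \leq (1-t)\Ent_m(\mu_0) + t\Ent_m(\mu_1) - \frac{\lambda}{2}t(1-t)\, W_{2,m}^2(\mu_0,\mu_1),
\]
where $W_{2,m}^2(\mu_0,\mu_1)$ denotes the squared transport cost along the generalized geodesic (i.e.\ $\int_{\R^n}\|T^{\mu_0}_m - T^{\mu_1}_m\|_2^2 \, dm$ in terms of the Brenier maps from $m$ to $\mu_0$ and $\mu_1$), which serves as the natural quadratic penalty along such curves.

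Next, I would evaluate this inequality at $t=1/2$, discard the left-hand side via the non-negativity $\Ent_m(\mu_{1/2}) \geq 0$, and invoke the bound $W_2^2(\mu_0,\mu_1) \leq W_{2,m}^2(\mu_0,\mu_1)$, which holds because $(T^{\mu_0}_m, T^{\mu_1}_m)_{\#} m$ is an admissible coupling of $\mu_0$ and $\mu_1$. After rearranging, the result is
\[
\frac{1}{2}W_2^2(\mu_0,\mu_1) \leq \frac{2}{\lambda}\bigl(\Ent_m(\mu_0)+\Ent_m(\mu_1)\bigr),
\]
which is exactly the main display once $\lambda$ is written back as $\kappa(1+\min\{\kappa_0/\kappa',\kappa_1/\kappa'\})$.

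For the ``in particular'' part, observe that $\kappa_0,\kappa_1 \geq \kappa'$ forces $\min\{\kappa_0/\kappa',\kappa_1/\kappa'\}\geq 1$, so the factor $\frac{2}{1+\min\{\kappa_0/\kappa',\kappa_1/\kappa'\}}$ is at most $1$ and the inequality collapses to $\tfrac{1}{2}W_2^2(\mu_0,\mu_1) \leq \tfrac{1}{\kappa}(\Ent_m(\mu_0)+\Ent_m(\mu_1))$. There is no genuine obstacle in the corollary itself, since the whole argument is a one-line midpoint-convexity trick; the only point worth being careful about is the convention for convexity along generalized geodesics, namely that the quadratic penalty is taken with respect to $W_{2,m}^2(\mu_0,\mu_1)$ rather than $W_2^2(\mu_0,\mu_1)$, so that the inequality $W_2^2 \leq W_{2,m}^2$ plays in our favor instead of against us.
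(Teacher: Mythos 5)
Your proof is correct and uses essentially the same midpoint-convexity argument as the paper (the paper simply remarks that Corollary \ref{MC2} is proved in the same way as Corollary \ref{MC1}: evaluate the convexity inequality at $t=1/2$, drop $\Ent_m(\mu_{1/2})\geq 0$, and rearrange). Your extra caution about $W_{2,m}^2$ versus $W_2^2$ is harmless but not needed here, since Theorem~\ref{MT2} as stated and proved already delivers the $\lambda$-convexity inequality directly in terms of $W_2^2(\mu_0,\mu_1)$: the bound $W_2^2(\mu_0,\mu_1)\leq\int_{\R^n}\|T_0-T_1\|_2^2\,dm$ coming from the admissibility of $(T_0,T_1)_{\#}m$ is already absorbed into the proof of that theorem.
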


In Corollaries \ref{MC1} and \ref{MC2}, we do not assume that probability measures $m, \mu_0$ and $\mu_1$ 
are symmetric.
In this sense, our symmetrized Talagrand inequalities are more general than Fathi's inequalities. 
We find the assumption (i) in Corollaries \ref{MC1} and \ref{MC2} to be naturally derived from Theorem \ref{STI}(2) (see Section 3). 

Besides the results above, we discuss an alternative proof of Theorem \ref{STI}. In particular, we can prove an extension of Theorem \ref{STI}(1) and apply it to well-known inequalities in convex geometry.

The present paper is organized as follows. In the next section, we introduce some fundamental notions from optimal transport 
theory and functional inequalities including Talagrand inequalities. 
In Section 3, we give another form of the symmetrized Talagrand inequality by a self-improvement of Fathi's result  (Theorem \ref{STI}(2)). 
The barycenter of a probability measure plays an important role in this section. 
In Section 4, we prove Caffarelli's contraction theorem under relaxed conditions compared with classical Caffarelli's result, 
which is the key theorem in order to prove the main theorems. 
In  Section 5, we prove the main theorems and apply them to prove the corresponding HWI inequalities, logarithmic Sobolev inequalities and Poincar\'{e} inequalities. 
Moreover, we give an alternative proof of Theorem \ref{STI} and an extension on the real line in Subsection 5.3. 
In the final section, we describe some applications of the result in the previous subsection to convex geometry, in particular, to the concentration of measures and the Blaschke-Santal\'{o} inequality, 
those generalize classical ones. 
\section*{{\bf Acknowledgments}}
The author would like to thank his supervisor, Prof. Shin-ichi Ohta 
for warm encouragement, reviewing a preliminary version of this paper, and useful comments.
He also thanks his colleagues for conversations on this subject.

\section{Preliminaries}

In this section, we describe several well-known facts from optimal transport theory and various functional inequalities needed in subsequent sections. 
For general theory and discussions in detail, for example, see \cite{V1} and \cite{V2}. 
We give proofs of some results for applications described in Section 5. 

\subsection{Optimal transport theory}

Let us denote by $\inn{\cdot}{\cdot}$ the standard Euclidean inner product on $\R^n$, and denote by $\|\cdot\|_2$ the standard Euclidean norm on $\R^n$. 
We also define $P_2(\R^n)$ as the set of all Borel probability measures on $\R^n$ with $\int_{\R^n}\|x\|_2^2 ~d\mu(x) < \infty$. 
For any two probability measures $\mu$ and $\nu$ in $P_2(\R^n)$, a probability measure $\pi$ on $\R^n\times\R^n$ is said to be a {\it coupling of $\mu$ and $\nu$} if 
for any Borel subset $A\subset\R^n$, $\pi(A\times \R^n) = \mu(A)$ and $\pi(\R^n\times A) = \nu(A)$ hold, and we denote by $\Pi(\mu,\nu)$ the set of all couplings of $\mu$ and $\nu$. 
  
For any $\mu,\nu \in P_2(\R^n)$, the {\it $(L^2$-$)$Wasserstein distance} is defined as follows: 
\[
W_2(\mu, \nu) := \left(\inf_{\pi \in \Pi(\mu, \nu)}\int_{\R^n\times \R^n}\|x - y\|_2^2 ~d \pi(x, y) \right)^{\frac{1}{2}}.
\]
In fact, $W_2$ is a distance function on $P_2(\R^n)$, and in addition, the metric space $(P_2(\R^n), W_2)$ is a geodesic space. 
This space is called the $(L^2$-){\it Wasserstein space}. 
In the following, we describe some well-known and important facts from optimal transport theory.  

The next result is known as the Kantorovich duality. 
\begin{Thm}[\cite{V1}]\label{KD}
For any $\mu, \nu \in P_2(\R^n)$, it holds that 
\begin{align*}
W_2^2(\mu, \nu) = &\sup\Bigg\{\int_{\R^n}f ~d\mu + \int_{\R^n}g ~d\nu ~
\Bigg{|}~ f\in L^1(\R^n,\mu), g\in L^1(\R^n,\nu), \\
&\hspace{7cm} f(x) + g(y) \leq \|x - y\|_2^2 ,~\forall x,y\in \R^n \Bigg\}.
\end{align*}
\end{Thm}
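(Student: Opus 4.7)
My plan is to establish the two inequalities $W_2^2(\mu,\nu) \geq \sup\{\cdots\}$ and $W_2^2(\mu,\nu) \leq \sup\{\cdots\}$ separately.

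The first direction is a one-line calculation. For any $\pi \in \Pi(\mu,\nu)$ and any admissible pair $(f,g)$ satisfying the pointwise bound $f(x)+g(y) \leq \|x-y\|_2^2$, integrating against $\pi$ and using the marginal conditions gives
$$\int_{\R^n}f\,d\mu + \int_{\R^n}g\,d\nu = \int_{\R^n \times \R^n}(f(x)+g(y))\,d\pi(x,y) \leq \int_{\R^n \times \R^n}\|x-y\|_2^2\,d\pi(x,y).$$
Taking the infimum over $\pi$ and then the supremum over $(f,g)$ produces the bound $\sup \leq W_2^2$.

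For the reverse inequality, my plan is to apply the Fenchel-Rockafellar duality theorem on the Banach space $E = C_b(\R^n \times \R^n)$ equipped with the uniform norm. I would set
$$\Theta(\phi) = \begin{cases} -\int_{\R^n}f\,d\mu - \int_{\R^n}g\,d\nu & \text{if } \phi(x,y) = f(x)+g(y) \text{ with } f,g \in C_b(\R^n), \\ +\infty & \text{otherwise,} \end{cases}$$
and $\Xi(\phi) = 0$ if $\phi \leq \|x-y\|_2^2$ pointwise, $\Xi(\phi) = +\infty$ otherwise. Both functionals are convex and proper, and $\Xi$ is continuous at the constant function $\phi \equiv -1$, where $\Theta$ is also finite, so the Fenchel-Rockafellar qualification is satisfied. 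Identifying the Legendre conjugates, $\Theta^*(-\pi)$ is finite (and equal to $0$) exactly when the signed measure $\pi$ has marginals $\mu$ and $\nu$, while $\Xi^*(\pi)$ is finite (and equal to $\int \|x-y\|_2^2\,d\pi$) exactly when $\pi \geq 0$. The duality identity $\inf_E(\Theta+\Xi) = \max_{\pi}\bigl(-\Theta^*(-\pi)-\Xi^*(\pi)\bigr)$ thus identifies the Kantorovich dual supremum (restricted to $C_b$ pairs) with $W_2^2(\mu,\nu)$.

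The main obstacle is that the cost $c(x,y) = \|x-y\|_2^2$ is continuous but \emph{unbounded}, and the admissible test pairs in the theorem range over $L^1(\mu) \times L^1(\nu)$ rather than $C_b \times C_b$. To bridge this gap, I would first truncate to $c_R := \min\{c, R\}$, carry out the Fenchel-Rockafellar step for each $R$, and then pass to the limit $R \to \infty$ using the second-moment hypothesis $\mu,\nu \in P_2(\R^n)$, which guarantees $c \in L^1(\pi)$ for every $\pi \in \Pi(\mu,\nu)$ and thereby justifies monotone convergence. Enlarging the class of test pairs from $C_b$ to $L^1$ is then automatic: any $L^1$-admissible pair is controlled above by $W_2^2$ via the easy direction, while the $C_b$-supremum has already been shown to reach $W_2^2$.
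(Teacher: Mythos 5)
The paper does not prove this theorem; it is cited verbatim from Villani's book \cite{V1}, so there is no in-paper proof to compare against. Your proposal is essentially Villani's own Fenchel--Rockafellar argument, so I will assess it on its own terms.

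Your easy direction and your final sandwich argument (to pass from $C_b$ test pairs to $L^1$ test pairs) are both correct. However, there is a genuine gap in the Fenchel--Rockafellar step: you work on the Banach space $E = C_b(\R^n\times\R^n)$, but the topological dual of $C_b(\R^n\times\R^n)$ with the uniform norm is \emph{not} the space of finite signed Radon measures; it is the strictly larger space of bounded, finitely additive, regular set functions. Consequently the element $\pi \in E^*$ that the duality theorem produces is not, a priori, a coupling $\pi \in \Pi(\mu,\nu)$, and the identification of $-\Theta^*(-\pi) - \Xi^*(\pi)$ with $-\int \|x-y\|_2^2\,d\pi$ over genuine transport plans does not go through. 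Your truncation $c_R := \min\{c, R\}$ only repairs the unboundedness of the cost; it does nothing about the non-compactness of $\R^n\times\R^n$, which is what causes the dual-space problem. The standard remedy --- and the one Villani actually uses --- is to first run the Fenchel--Rockafellar argument on $C(K\times K)$ for compact $K$, where Riesz representation gives $(C(K\times K))^* = \mathcal{M}(K\times K)$ on the nose, and then pass to Polish spaces by a tightness/approximation argument; alternatively one must argue explicitly (using tightness of $\mu\otimes\nu$ and inner regularity) that the maximizer in $(C_b)^*$ is countably additive. Without one of these steps, the reverse inequality is not established.
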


\begin{Rem}\label{rem}
Although we consider the supremum in the right hand side above for functions $f$ and $g$ which are in $L^1(\R^n,\mu)$ and $L^1(\R^n,\nu)$, respectively, 
we can restrict these functions to the class $C_b(\R^n)$ consisting of all bounded continuous functions on $\R^n$ (see \cite[Theorem 1.3]{V1}). 
\end{Rem}

It follows from elementary probability theory that there exists some coupling attaining the infimum of the Wasserstein distance in the definition, which is called the {\it optimal transport plan} or {\it optimal coupling} (see \cite{V1}). 
In order to explain a property of optimal transport plans, we need the following notion. 
\begin{Def}
$\Gamma\subset \R^n\times\R^n$ is said to be {\it cyclically monotone} if for any integer $k\geq1$ and any $(x_1,y_1),(x_2,y_2),\dots,(x_k,y_k)\in\Gamma$, it holds that 
\[
\sum_{i=1}^k\|x_i-y_i\|_2^2 \leq \sum_{i=1}^k \|x_i-y_{i-1}\|_2^2,
\]
where we put $y_0:=y_k$.
\end{Def}

\begin{Thm}[\cite{V1}]\label{CM}
Let $\mu,\nu\in P_2(\R^n)$, and let $\pi\in \Pi(\mu,\nu)$ be an optimal transport plan of $\mu$ and $\nu$.  
Then $\mathrm{supp}(\pi)\subset \R^n\times\R^n$ is cyclically monotone. 
\end{Thm}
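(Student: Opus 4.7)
The plan is to argue by contradiction: if $\mathrm{supp}(\pi)$ fails to be cyclically monotone, I will locally rearrange $\pi$ on a set of positive $\pi$-measure to produce a competing coupling of $\mu$ and $\nu$ whose total cost is strictly smaller, violating optimality.

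Suppose there exist $k\geq 1$ and (without loss of generality distinct) points $(x_1,y_1),\dots,(x_k,y_k)\in\mathrm{supp}(\pi)$ satisfying (with $y_0:=y_k$)
\[
\sum_{i=1}^k \|x_i-y_i\|_2^2 > \sum_{i=1}^k \|x_i-y_{i-1}\|_2^2.
\]
By continuity of the cost $c(x,y):=\|x-y\|_2^2$, I can choose small, pairwise disjoint open neighborhoods $U_i\ni x_i$ and $V_i\ni y_i$ so that the same strict inequality persists with a uniform margin $\delta>0$ for every choice $(x_i',y_i')\in U_i\times V_i$. Because $(x_i,y_i)\in\mathrm{supp}(\pi)$, the masses $m_i:=\pi(U_i\times V_i)$ are positive; I set $\pi_i:=m_i^{-1}\pi|_{U_i\times V_i}$ and let $\mu_i,\nu_i$ denote its marginals.

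The key construction is the perturbed plan (with indices taken mod $k$, so $\nu_0:=\nu_k$)
\[
\tilde\pi := \pi + \frac{\epsilon}{k}\sum_{i=1}^k\bigl(\mu_i\otimes\nu_{i-1} - \pi_i\bigr),\qquad \epsilon>0,
\]
which reroutes a fraction of the mass near each $(x_i,y_i)$ so that it now sits near $(x_i,y_{i-1})$ instead. Telescoping cyclically in $i$ shows that the perturbation has zero marginals, so $\tilde\pi$ retains the marginals $\mu,\nu$; disjointness of the $U_i\times V_i$ together with $\epsilon\leq k\min_i m_i$ guarantees $\tilde\pi\geq 0$, hence $\tilde\pi\in\Pi(\mu,\nu)$.

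Finally I would compare costs. The difference reduces to
\[
\int c\,d\tilde\pi - \int c\,d\pi = \frac{\epsilon}{k}\sum_{i=1}^k\Bigl[\int c\,d(\mu_i\otimes\nu_{i-1}) - \int c\,d\pi_i\Bigr],
\]
and since the neighborhoods are small, these integrals are close to $c(x_i,y_{i-1})$ and $c(x_i,y_i)$ respectively; the $\delta$-margin then makes the total negative, contradicting the optimality of $\pi$. The main obstacle is essentially quantitative: the two smallness scales must be coordinated in the correct order---the diameters of the $U_i,V_i$ are fixed first (small enough that the point-value approximations of the integrals lose at most $\delta/2$), and only then is $\epsilon$ chosen small enough to preserve nonnegativity of $\tilde\pi$ given the resulting $m_i$.
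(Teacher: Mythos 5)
The paper does not prove this result itself (it simply cites \cite{V1}), and your argument is exactly the standard rerouting proof one finds there, so the approach matches and the proof is correct. Two small points worth tightening: (a) the ``without loss of generality distinct'' reduction deserves a line --- if $(x_i,y_i)=(x_j,y_j)$, the cycle splits into two shorter cycles whose defects sum to the original one, so one of them still violates monotonicity, and induction finishes it; (b) the cost comparison is cleanest if you integrate the pointwise margin against the product measure $\sigma:=\pi_1\otimes\cdots\otimes\pi_k$, which identifies $\sum_i\bigl[\int c\,d(\mu_i\otimes\nu_{i-1})-\int c\,d\pi_i\bigr]$ with $\int\bigl[\sum_i c(x_i,y_{i-1})-\sum_i c(x_i,y_i)\bigr]d\sigma<-\delta$ exactly, avoiding the second ``close-to-point-value'' approximation step you describe.
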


Moreover, when a probability measure $\mu$ is absolutely continuous with respect to the Lebesgue measure, then an optimal transport plan of $\mu$ and another probability measure $\nu$  
can be induced from some map. 
The next theorem is known as Brenier's theorem. 
\begin{Thm}[\cite{V1}]\label{BT}
Let $\mu, \nu\in P_2(\R^n)$ be such that $\mu$ is absolutely continuous with respect to the Lebesgue measure. 
Then there exists some map $T : \R^n\to\R^n$ satisfying $T_{\#}\mu =\nu$ and 
\[
W_2^2(\mu, \nu) = \int_{\R^n}\|T(x) - x\|_2^2 ~d\mu(x),
\]
where $T_{\#}\mu$ is the image measure of $\mu$ by $T$. 
Moreover, $T$ is unique up to a difference on a null measure set and coincides with the gradient of some convex function on $\R^n$. 
In addition, $T$ is a locally Lipschitz map. 
\end{Thm}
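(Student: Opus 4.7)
The plan is to combine three ingredients: existence of an optimal coupling, cyclical monotonicity of its support (Theorem \ref{CM}), and Rockafellar's characterization of cyclically monotone sets as subsets of graphs of subdifferentials of convex functions. Existence of some optimal $\pi \in \Pi(\mu,\nu)$ is standard: the family $\Pi(\mu,\nu)$ is tight by Prokhorov's theorem and hence weakly precompact, while the quadratic cost functional $\pi \mapsto \int \|x-y\|_2^2 \, d\pi(x,y)$ is weakly lower semicontinuous, so the infimum is attained. By Theorem \ref{CM}, the set $\Gamma := \mathrm{supp}(\pi)$ is cyclically monotone.

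Next I would invoke Rockafellar's theorem: any cyclically monotone $\Gamma \subset \R^n \times \R^n$ is contained in the graph of the subdifferential $\partial\phi$ of some proper lower semicontinuous convex function $\phi:\R^n \to \R \cup \{+\infty\}$. The standard construction, after fixing a base point $(x_0,y_0) \in \Gamma$, is
$$
\phi(x) := \sup\left\{\inn{x-x_k}{y_k} + \sum_{i=0}^{k-1}\inn{x_{i+1}-x_i}{y_i} \ :\ k\geq 1,\ (x_i,y_i)\in\Gamma\right\};
$$
this is convex as a supremum of affine functions, and cyclical monotonicity ensures that $y \in \partial\phi(x)$ for every $(x,y) \in \Gamma$.

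The absolute continuity of $\mu$ with respect to $\Lc^n$ now enters decisively: on the interior of its domain of finiteness, a convex function is locally Lipschitz and hence Lebesgue-a.e. differentiable, so $\phi$ is differentiable $\mu$-a.e. At such points $\partial\phi(x) = \{\nabla\phi(x)\}$ is single-valued, and the inclusion $\Gamma \subset \partial\phi$ forces $\pi$ to be concentrated on the graph of $T := \nabla\phi$. Consequently $\pi = (\mathrm{id}\times T)_\#\mu$, so $T_\#\mu = \nu$ and $T$ attains the Wasserstein infimum. Uniqueness $\mu$-a.e. follows because any other optimal $\tilde T = \nabla\tilde\phi$ produces, by the same construction, a coupling concentrated on its graph; matching the two couplings on the absolutely continuous measure $\mu$ pins down the values of the gradients almost everywhere.

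The main obstacle is the locally Lipschitz assertion. The Brenier-Rockafellar scheme above only produces a convex potential $\phi$ whose gradient is a priori of locally bounded variation; local Lipschitz continuity of $T = \nabla\phi$ amounts to $\phi \in C^{1,1}_{\mathrm{loc}}$ on the interior of its domain, which is the content of Caffarelli's interior regularity and typically requires additional boundedness and convexity hypotheses on the densities of $\mu$ and $\nu$. I would therefore import this step from the refined Caffarelli-type contraction result proved in Section 4 of the paper rather than attempt to establish it by the cyclical monotonicity argument alone.
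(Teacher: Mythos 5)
The paper presents Theorem \ref{BT} as a citation to Villani's book \cite{V1} without giving a proof of its own, so there is no internal argument to compare against; I assess your sketch on its merits. For the first three assertions---existence of an optimal map, that it is the gradient of a convex function, and $\mu$-a.e.\ uniqueness---your Brenier--Rockafellar scheme (Prokhorov compactness of $\Pi(\mu,\nu)$, weak lower semicontinuity of the quadratic cost, cyclical monotonicity of the optimal support via Theorem \ref{CM}, Rockafellar's representation of a cyclically monotone set by $\partial\phi$, and a.e.\ differentiability of the convex potential under $\mu\ll\Lc^n$) is exactly the standard argument and is correct.

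Where your proposal runs into trouble is precisely the point you yourself flag, the assertion ``$T$ is a locally Lipschitz map,'' and the fix you suggest does not work. Importing the regularity from Theorem \ref{CCT} in Section 4 is circular: the proof of Theorem \ref{CCT} begins by applying Theorem \ref{BT} to obtain $T=\nabla\phi$ and then differentiates $T$, so it presupposes the regularity it is being asked to supply. Moreover Theorem \ref{CCT} is proved only under curvature-pinching hypotheses on both densities ($\nabla^2 V_0\leq\kappa_0$, $\nabla^2 V_1\geq\kappa_1$), which are absent from Theorem \ref{BT}. In fact the ``locally Lipschitz'' clause is simply false for arbitrary $\nu\in P_2(\R^n)$: if $\nu$ is a sum of two Dirac masses, or uniform on a non-convex set, the Brenier map is not even continuous. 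The correct reading is that this clause is an implicit overstatement valid only in the regime the paper actually uses (smooth, positive densities for both endpoints, where Caffarelli's Monge--Amp\`ere regularity theory yields a smooth $\phi$); your proof should state that this part requires such extra hypotheses and a separate regularity theorem rather than attempting to derive it from cyclical monotonicity or from Section 4.
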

The map $T$ in Theorem \ref{BT} is called the {\it optimal transport map} from $\mu$ to $\nu$, and the convex function generating $T$ is called the {\it Kantorovich potential}. 
Using optimal transport maps, we can concretely represent geodesics between two probability measures in $(P_2(\R^n),W_2)$. 
Indeed, when $\mu\in P_2(\R^n)$ is absolutely continuous with respect to the Lebesgue measure, $\nu\in P_2(\R^n)$, and $T$ is the optimal transport map from $\mu$ to $\nu$, 
then $\mu_t:=((1-t)id+tT)_{\#}\mu$ $(t\in[0,1])$ is the geodesic from $\mu$ to $\nu$ in $(P_2(\R^n),W_2)$, 
where $id$ is the identity map on $\R^n$. 

The equation in the next theorem is called the {\it Monge-Amp\`{e}re equation}. 
\begin{Thm}[\cite{V1}]\label{MA}
Let $\mu,\nu\in P_2(\R^n)$ be absolutely continuous with respect to the Lebesgue measure, and denote their densities by $f$ and $g$, respectively. 
Let $T:\R^n \to \R^n$ be the optimal transport map from $\mu$ to $\nu$. 
Then it holds that $f(x) = g(T(x)) \det \nabla T(x)$ for $\mu$-a.e. $x\in \R^n$. 
\end{Thm}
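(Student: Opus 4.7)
The plan is to derive the Monge-Amp\`ere equation as a consequence of the pushforward relation $T_{\#}\mu = \nu$ combined with a change-of-variables formula for the Brenier map. By Theorem \ref{BT} the map $T$ is locally Lipschitz and coincides $\mu$-a.e.\ with the gradient $\nabla\varphi$ of a convex Kantorovich potential $\varphi$, and I will use this regularity throughout.

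First, I would record the regularity needed for Jacobians. Since $T$ is locally Lipschitz, Rademacher's theorem ensures that $T$ is differentiable at Lebesgue-a.e.\ point, and convexity of $\varphi$ makes $\nabla T = \nabla^2\varphi$ symmetric and positive semi-definite wherever it exists, so $\det \nabla T(x) \geq 0$ a.e. A stronger ingredient I would invoke is Alexandrov's theorem, which gives a pointwise second derivative of $\varphi$ at Lebesgue-a.e.\ point and provides the clean notion of $\nabla T$ to use in the change-of-variables step.

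Second, I would translate the pushforward relation into an integral identity. For any Borel set $A \subset \R^n$,
\[
\int_A g(y)\,dy \;=\; \nu(A) \;=\; \mu(T^{-1}(A)) \;=\; \int_{T^{-1}(A)} f(x)\,dx,
\]
and applying the area formula for locally Lipschitz maps to the left-hand side yields
\[
\int_A g(y)\,dy \;=\; \int_{T^{-1}(A)} g(T(x))\,\det \nabla T(x)\,dx,
\]
provided the multiplicity factor in the area formula is $1$, that is, $T$ is essentially injective. Comparing these two expressions for $\nu(A)$ as $A$ ranges over a rich enough family (or equivalently, testing against arbitrary $h \in C_b(\R^n)$) gives the pointwise identity $f(x) = g(T(x))\,\det \nabla T(x)$ for $\mu$-a.e.\ $x$.

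The main obstacle is the rigorous justification of the area formula in this setting, specifically handling the set where $T$ fails to be injective, fails to be differentiable, or has vanishing Jacobian. Essential injectivity on the set of twice-differentiability of $\varphi$ follows from the strict monotonicity of subgradients of convex functions, while the degeneracy set $\{\det \nabla T = 0\}$ is shown, using Alexandrov's theorem together with monotonicity, to map into a Lebesgue-negligible subset of $\R^n$, so it only affects $g$ on a null set. Packaging these ingredients cleanly is the genuinely nontrivial part of the argument, and it is essentially the only analytic input needed beyond Brenier's theorem itself.
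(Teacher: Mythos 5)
The paper states Theorem~\ref{MA} as a cited result from Villani's book \cite{V1} and does not supply a proof of its own, so there is no in-paper argument to compare against. Your sketch follows the standard route (McCann's Jacobian/change-of-variables theorem for gradients of convex functions): since $T$ agrees $\mu$-a.e.\ with $\nabla\varphi$ for a convex Kantorovich potential $\varphi$, Alexandrov's theorem yields a pointwise Hessian $\Lc^n$-a.e., and the pushforward identity is converted to the Monge--Amp\`ere equation via the area formula, with monotonicity of $\partial\varphi$ controlling the non-differentiability, non-injectivity, and Jacobian-degeneracy sets. The outline is correct, and the step you flag as genuinely delicate — verifying that these exceptional sets contribute nothing and that the area formula applies with multiplicity one — is indeed where the real measure-theoretic work lies; as a plan the argument is sound and matches the standard literature.
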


Finally, we describe optimal transport theory on the real line which is needed in Subsection 5.3. 
\begin{Thm}[\cite{V1}]\label{1dim}
Let $\mu$ and $\nu$ be two probability measures on $\R$ such that $\mu$ is absolutely continuous with respect to the Lebesgue measure. 
Define the functions $F$ and $G$ on $\R$ as $F(x):=\mu((-\infty,x])$ and $G(x):=\nu((-\infty,x])$, respectively, and 
the function $G^{-1}$ on $(0,1)$ as $G^{-1}(x):=\inf\{y\in\R ~|~G(y)>x\}$. 
Then $F$ is a monotone increasing function, and $G^{-1}\circ F$ is an optimal transport map from $\mu$ to $\nu$. 
In particular, when $\mu, \nu$ and $m$ in $P_2(\R)$ are absolutely continuous with respect to the Lebesgue measure, $m$ satisfies $\mathrm{supp}(m)=\R$, 
and $T_1$ and $T_2$ are the optimal transport maps from $m$ to $\mu$ and $\nu$, respectively, 
then $T_1$ and $T_2$ are locally Lipschitz and strictly monotone increasing functions satisfying $W_2^2(\mu,\nu) =\int_{\R}(T_1-T_2)^2 ~dm$. 
\end{Thm}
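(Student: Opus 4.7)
My plan is to first establish that $G^{-1}\circ F$ is the optimal transport map from $\mu$ to $\nu$ via Brenier's theorem, and then to deduce the formula $W_2^2(\mu,\nu)=\int_{\R}(T_1-T_2)^2\,dm$ in the ``in particular'' case by a change of variables through the homeomorphism $T_1$.

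For the first assertion, monotonicity of $F$ is immediate. I would verify $(G^{-1}\circ F)_{\#}\mu=\nu$ by using that absolute continuity of $\mu$ makes $F$ continuous, so $F(X)$ is uniformly distributed on $(0,1)$ when $X\sim\mu$, together with the classical inverse-CDF identity $G^{-1}(U)\sim\nu$ for $U\sim U(0,1)$. Since $G^{-1}\circ F$ is nondecreasing on $\R$, the function $\varphi(x):=\int_0^x(G^{-1}\circ F)(s)\,ds$ is convex and its almost-everywhere derivative equals $G^{-1}\circ F$. Brenier's theorem (Theorem \ref{BT}) then identifies $G^{-1}\circ F$ with the optimal transport map from $\mu$ to $\nu$ via the uniqueness (modulo $\mu$-null sets) of gradients of convex functions pushing $\mu$ forward to $\nu$. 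Alternatively, one checks cyclical monotonicity of the graph directly (which reduces to $(x_1-x_2)((G^{-1}\circ F)(x_1)-(G^{-1}\circ F)(x_2))\ge 0$ in the $k=2$ case, with the general case following by a standard induction) and invokes Theorem \ref{CM}.

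For the ``in particular'' statement, the first assertion furnishes explicit descriptions of $T_1$ and $T_2$ in terms of the CDFs of $m,\mu,\nu$. The local Lipschitz property is quoted from Theorem \ref{BT}. Strict monotonicity of $T_1$ follows because $T_1$ is nondecreasing, and if it were constant on some open interval $I$, then $m(I)>0$ (using $\mathrm{supp}(m)=\R$) would force $(T_1)_{\#}m=\mu$ to have an atom, contradicting absolute continuity of $\mu$; the argument for $T_2$ is identical. Since the CDF of $m$ is continuous and strictly increasing (from absolute continuity and full support), $T_1$ is a homeomorphism of $\R$ onto itself, and hence $T_2\circ T_1^{-1}$ is a well-defined nondecreasing map pushing $\mu$ to $\nu$; by the first part it is the optimal transport map from $\mu$ to $\nu$. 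The change of variables $x=T_1(y)$ combined with $\mu=(T_1)_{\#}m$ then yields
\[
W_2^2(\mu,\nu)=\int_{\R}\bigl|T_2(T_1^{-1}(x))-x\bigr|^2\,d\mu(x)=\int_{\R}\bigl(T_2(y)-T_1(y)\bigr)^2\,dm(y).
\]

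The main technical subtlety I foresee is the boundary behavior of $G^{-1}\circ F$: the generalized inverse $G^{-1}$ is only finite on $(0,1)$, so one must check that $\{x\in\R : F(x)\in\{0,1\}\}$ is $\mu$-null (clear since $\mu$ has no atoms and $F$ is continuous) and extend the map arbitrarily on that null set without affecting either the push-forward or the transport cost. A minor secondary point is that the strict monotonicity argument crucially relies on the hypothesis $\mathrm{supp}(m)=\R$, without which $T_1$ could be constant on an open interval disjoint from $\mathrm{supp}(m)$ while still transporting $m$ to an atomless $\mu$.
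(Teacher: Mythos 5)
The paper states Theorem \ref{1dim} as a citation to Villani's book \cite{V1} and does not supply its own proof, so there is nothing in the paper to compare your argument against directly; your proof is correct and follows the standard route. Two small points worth tightening. First, $T_1$ is a homeomorphism of $\R$ onto the open interval $T_1(\R)$, not necessarily onto all of $\R$ (e.g.\ if $\mu$ has bounded support); this is harmless because $\mu=(T_1)_{\#}m$ is concentrated on $T_1(\R)$, so $T_2\circ T_1^{-1}$ is still defined $\mu$-a.e.\ and the change of variables goes through. Second, the phrase ``by the first part it is the optimal transport map'' is cleaner as a direct appeal to Brenier uniqueness (Theorem \ref{BT}): $T_2\circ T_1^{-1}$ is nondecreasing, hence $\mu$-a.e.\ the gradient of a convex function pushing $\mu$ to $\nu$, and is therefore \emph{the} optimal map. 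Alternatively, you can bypass $T_2\circ T_1^{-1}$ entirely by noting that the coupling $(T_1,T_2)_{\#}m\in\Pi(\mu,\nu)$ has monotone, hence cyclically monotone, support, which already certifies it as an optimal plan and yields $W_2^2(\mu,\nu)=\int_{\R}(T_1-T_2)^2\,dm$ in one step.
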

\subsection{Various functional inequalities}
 
Firstly, we define the Shannon entropy and relative entropy. 
Let $\mu \in P_2(\R^n)$. 
Then the {\it Shannon entropy of $\mu$} is defined by 
\begin{align*}
\Ent(\mu) := 
\begin{cases}
\int_{\R^n}\rho(x)\log \rho(x) ~dx  &\quad \text{if} \quad \mu= \rho \Lc^n, \\
\infty &\quad \text{if} \quad \mu\notll\Lc^n.
\end{cases}
\end{align*}
Similarly, for a probability measure $m$ on $\R^n$, the {\it relative entropy of $\mu$ with respect to $m$} is defined by 
\begin{align*}
\Ent_m(\mu) := 
\begin{cases}
\int_{\R^n}\rho\log \rho ~dm &\quad \text{if} \quad \mu= \rho m, \\
\infty &\quad \text{if} \quad \mu\notll m.
\end{cases}
\end{align*}
Such a probability measure $m$ is called the {\it reference measure}. 
By Jensen's inequality, the relative entropy is nonnegative. 
Moreover, $\Ent_{m}(\mu) = 0$ holds if and only if $m=\mu $. 

The next theorem asserts that the Shannon entropy and relative entropy are convex on $(P_2(\R^n),W_2)$. 
\begin{Thm}[\cite{V2}]\label{convexity}
\begin{description}
\item{(1)} Along any geodesic $(\mu_t)_{t \in[0,1]}$ in $(P_2(\R^n), W_2)$, we have 
\[
\Ent (\mu_t) \leq (1-t)\Ent (\mu_0) + t\Ent (\mu_1)
\]
for all $t\in[0,1]$.
\item{(2)} Let $m = e^{-V}\mathcal{L}^n$ be a probability measure on $\R^n$ such that $V \in C^{\infty}(\R^n)$ is  $\kappa$-convex for some $\kappa>0$. 
Then, along any geodesic $(\mu_t)_{t \in[0,1]}$ in $(P_2(\R^n), W_2)$, we have 
\[
\Ent_m(\mu_t) \leq (1-t)\Ent_m(\mu_0) + t\Ent_m(\mu_1) - \frac{\kappa}{2}t(1-t)W_2^2(\mu_0, \mu_1)
\]
for all $t\in[0,1]$. 
\end{description}
\end{Thm}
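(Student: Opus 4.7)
The plan is to follow McCann's classical displacement convexity argument. If either $\mu_0$ or $\mu_1$ fails to be absolutely continuous with respect to $\Lc^n$, the right-hand sides are infinite and both inequalities are trivial; so I would assume $\mu_i = \rho_i\Lc^n$ for $i = 0, 1$. By Theorem \ref{BT} the geodesic is $\mu_t = (T_t)_\#\mu_0$ where $T_t := (1-t)\mathrm{id} + tT$ and $T = \nabla\varphi$ is the Brenier map with $\varphi$ convex. Since $\nabla T = \nabla^2\varphi$ is symmetric positive semidefinite at $\mu_0$-a.e.\ $x$, the matrix $\nabla T_t(x) = (1-t)I + t\nabla T(x)$ is symmetric positive definite for $t \in [0, 1)$, so the Monge--Amp\`ere equation (Theorem \ref{MA}) yields $\rho_0(x) = \rho_t(T_t(x))\det \nabla T_t(x)$ for $\mu_0$-a.e.\ $x$.

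For (1), the change of variables formula gives
\[
\Ent(\mu_t) = \int_{\R^n}\log\rho_t(T_t(x))\,d\mu_0(x) = \Ent(\mu_0) - \int_{\R^n}\log\det\nabla T_t(x)\,d\mu_0(x).
\]
Diagonalizing $\nabla T(x)$ with eigenvalues $\lambda_1(x), \dots, \lambda_n(x) \geq 0$, one has $\log\det \nabla T_t(x) = \sum_{i=1}^n \log((1-t) + t\lambda_i(x))$. Each summand is concave in $t$ as a composition of $\log$ with an affine map, so $t \mapsto \log\det \nabla T_t(x)$ is concave pointwise; integrating against $\mu_0$ shows that $t \mapsto -\int \log\det \nabla T_t\,d\mu_0$ is convex, which yields (1).

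For (2), I would use the identity $\Ent_m(\mu) = \Ent(\mu) + \int V\,d\mu$, valid whenever $\mu \ll \Lc^n$, to write $\Ent_m(\mu_t) = \Ent(\mu_t) + \int_{\R^n} V(T_t(x))\,d\mu_0(x)$. The $\kappa$-convexity of $V$ gives pointwise
\[
V(T_t(x)) \leq (1-t)V(x) + tV(T(x)) - \frac{\kappa}{2}t(1-t)\|x - T(x)\|_2^2.
\]
Integrating against $\mu_0$, using $T_\#\mu_0 = \mu_1$ and $W_2^2(\mu_0, \mu_1) = \int \|x - T(x)\|_2^2\,d\mu_0$, and combining with the convexity of $\Ent$ from (1), delivers (2).

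The main technical obstacle lies in justifying the change-of-variables formula rigorously, since $T = \nabla\varphi$ is only Alexandrov-differentiable $\mu_0$-a.e., so the Monge--Amp\`ere equation must be interpreted in this almost-everywhere sense and one has to ensure that the integrals are well-defined (in particular that $\log\det\nabla T_t$ is $\mu_0$-integrable from above even though it may take the value $-\infty$). The standard remedy is a careful localization and approximation argument that is well documented in \cite{V1} and \cite{V2}.
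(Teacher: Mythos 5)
Your proof is correct and is the standard McCann displacement-convexity argument. The paper itself does not prove Theorem \ref{convexity}; it cites it to \cite{V2}. However, your strategy---pushing forward along $T_t = (1-t)\mathrm{id} + tT$, applying the Monge--Amp\`ere equation to write $\Ent(\mu_t) = \Ent(\mu_0) - \int \log\det\nabla T_t\,d\mu_0$, exploiting the concavity in $t$ of $\log\det((1-t)I + tA)$ for symmetric positive semidefinite $A$, and then splitting $\Ent_m(\mu) = \Ent(\mu) + \int V\,d\mu$ and using the $\kappa$-convexity of $V$---is precisely the computational machinery the paper itself deploys in its proof of Theorem \ref{MT1} (see \eqref{aa} and the role of Lemma \ref{logdet} there) and in Corollary \ref{C1} (the same $\Ent_m = \Ent + \int V$ decomposition). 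In that sense your argument not only proves the cited theorem but also exhibits the base case of the quantitative refinement the paper develops: Theorem \ref{MT1} strengthens the concavity of $\log\det$ along the segment via Lemma \ref{logdet} and Caffarelli's contraction theorem, whereas your proof only needs plain concavity. Your final remark about Alexandrov differentiability and integrability of $\log\det\nabla T_t$ correctly flags the only technically delicate point, and the localization/approximation route you mention is indeed how this is handled in \cite{V1, V2}.
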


In order to describe the HWI inequality and logarithmic Sobolev inequality, we need the notion of the (relative) Fisher information. 
Let $m$ be a reference measure and $\mu \in P_2(\R^n)$. 
Then the {\it Fisher information of $\mu$ with respect to $m$} is defined by 
\begin{align*}
I_m(\mu) :=
\begin{cases}
 \int_{\R^n}\|\nabla \log \rho\|_2^2 \rho ~dm \quad &\text{if} \quad \mu=\rho m, \\
 \infty \quad &\text{if} \quad \mu\notll m.
\end{cases} 
\end{align*}
Clearly, the Fisher information is nonnegative. 

Theorem \ref{convexity} yields the HWI inequality and Talagrand inequality as follows.  
\begin{Thm}[HWI inequality]\label{HWI}
Let $m = e^{-V}\mathcal{L}^n$ be a probability measure on $\R^n$ such that $V \in C^{\infty}(\R^n)$ is $\kappa$-convex for some $\kappa>0$,  
and $\mu = \rho m\in P_2(\R^n)$ such that $\log\rho\in H^1(\R^n,\mu)$ and $\Ent_m(\mu)<\infty$ are satisfied. 
Then we have  
\[
\Ent_m(\mu) \leq W_2(\mu, m)\sqrt{I_m(\mu)} -\frac{\kappa}{2}W_2^2(\mu,m).
\]
\end{Thm}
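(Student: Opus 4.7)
The plan is to follow the standard displacement-convexity argument: compare $\Ent_m$ at $\mu$ with its value at a nearby point along the $W_2$-geodesic joining $\mu$ to $m$, use Theorem \ref{convexity}(2) for the quadratic lower bound, and identify the first-variation term with the Fisher information via Cauchy--Schwarz.

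Concretely, since both $\mu$ and $m$ are absolutely continuous with respect to $\mathcal{L}^n$, Brenier's theorem (Theorem \ref{BT}) provides an optimal transport map $T = \nabla \varphi$ from $\mu$ to $m$, and the geodesic is $\mu_t = ((1-t)\mathrm{id} + tT)_{\#}\mu$ for $t\in[0,1]$. Applying Theorem \ref{convexity}(2) with $\mu_0=\mu$, $\mu_1=m$, and using $\Ent_m(m)=0$, we get
\[
\Ent_m(\mu_t) \leq (1-t)\Ent_m(\mu) - \frac{\kappa}{2}t(1-t)W_2^2(\mu,m).
\]
Rearranging and dividing by $t>0$ gives
\[
\Ent_m(\mu) + \frac{\kappa}{2}(1-t)W_2^2(\mu,m) \leq \frac{\Ent_m(\mu) - \Ent_m(\mu_t)}{t},
\]
so letting $t \to 0^+$ yields $\Ent_m(\mu) + \tfrac{\kappa}{2}W_2^2(\mu,m) \leq -\frac{d}{dt}\bigr|_{t=0^+}\Ent_m(\mu_t)$, provided the right-hand derivative exists.

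Next, I would compute this derivative by the change of variables $d\mu_t = (\Phi_t)_{\#}d\mu$ with $\Phi_t = (1-t)\mathrm{id}+tT$. Writing $\mu = \tilde{\rho}\,m$ and invoking the Monge--Amp\`ere equation (Theorem \ref{MA}), the density of $\mu_t$ with respect to $m$ satisfies
\[
\log \tilde{\rho}_t(\Phi_t(x)) = \log \tilde{\rho}(x) + V(\Phi_t(x)) - V(x) - \log\det\nabla \Phi_t(x).
\]
Differentiating in $t$ at $t=0$ and integrating by parts against $\mu$, the $\nabla V$ contributions cancel against the boundary term from $\mathrm{tr}(\nabla T - I)$, leaving
\[
\frac{d}{dt}\bigg|_{t=0^+}\Ent_m(\mu_t) = \int_{\R^n}\inn{T(x)-x}{\nabla \log \tilde{\rho}(x)}\,d\mu(x).
\]
Applying the Cauchy--Schwarz inequality to the integral, together with $\int\|T-\mathrm{id}\|_2^2\,d\mu = W_2^2(\mu,m)$ and $\int\|\nabla\log\tilde{\rho}\|_2^2\,d\mu = I_m(\mu)$, yields
\[
-\frac{d}{dt}\bigg|_{t=0^+}\Ent_m(\mu_t) \leq W_2(\mu,m)\sqrt{I_m(\mu)},
\]
which combined with the earlier inequality gives the claimed HWI bound.

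The main obstacle is the regularity needed to justify the differentiation and integration by parts: the Brenier map $T$ is only a.e.\ defined and, in general, merely BV, so the Monge--Amp\`ere equation must be interpreted in Alexandrov or distributional sense, and the interchange of limit and integral at $t=0^+$ requires some care. The assumption $\log\tilde{\rho}\in H^1(\R^n,\mu)$ ensures that the first-variation formula makes sense and that the Cauchy--Schwarz step is valid; a standard approximation argument (smoothing $\mu$ and passing to the limit using lower semicontinuity of $\Ent_m$, $I_m$ and continuity of $W_2$) closes the regularity gap.
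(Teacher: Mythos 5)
Your proof is correct, and it arrives at the same inequality by differentiating the displacement-convexity bound at $t=0$, but the way you compute the first variation of $\Ent_m(\mu_t)$ differs from the paper's. You go through the Monge--Amp\`ere equation, differentiate the $\log\det$ term, and then integrate by parts so that the $\nabla V$ terms cancel against the divergence term, arriving at $\frac{d}{dt}\big|_{t=0}\Ent_m(\mu_t)=\int\inn{T(x)-x}{\nabla\log\rho(x)}\,d\mu(x)$. The paper instead sidesteps both the $\log\det$ differentiation and the integration by parts by exploiting the pointwise convexity inequality $a\log a-b\log b\leq(\log a+1)(a-b)$: writing $\rho_t$ for the density of $\mu_t$ w.r.t.\ $m$, one gets $\Ent_m(\mu)-\Ent_m(\mu_t)\leq\int\log\rho\,d\mu-\int\log\rho\,d\mu_t=\int\big(\log\rho(x)-\log\rho((1-t)x+tT(x))\big)\,d\mu(x)$, and a first-order Taylor expansion of $\log\rho$ along the displacement immediately gives $t\int\inn{\nabla\log\rho}{x-T(x)}\,d\mu+o(t)$, to which Cauchy--Schwarz is applied. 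The payoff of the paper's route is that it only ever needs an upper bound on the difference quotient, so there is no boundary-term bookkeeping to justify; your route computes the exact derivative, which is cleaner conceptually but requires more care in justifying the integration by parts. Both arguments rely on the same approximation (smooth, compactly supported densities) to validate the limiting steps, so neither is fundamentally more general.
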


\begin{proof}
By an approximation, we may assume that $\rho$ is smooth and compactly supported (see \cite[Theorem 9.17]{V1} for details).  
Substituting $\mu_0=\mu$ and $\mu_1=m$ in Theorem \ref{convexity}(2), we obtain that for any $t\in(0,1)$, 
\[
\Ent_m(\mu_t) \leq (1-t)\Ent_m(\mu) - \frac{\kappa}{2}t(1-t)W_2^2(\mu, m),
\]
where $(\mu_t)_{t\in[0,1]}$ is the geodesic from $\mu$ to $m$. Equivalently, we have 
\begin{align}\label{h}
\Ent_m(\mu)\leq \frac{\Ent_m(\mu)-\Ent_m(\mu_t)}{t} - \frac{\kappa}{2}(1-t)W_2^2(\mu, m).
\end{align}
Let $T$ be the optimal transport map from $\mu$ to $m$. Then we have $\mu_t=((1-t)id + tT)_{\#}\mu$, 
where $id$ is the identity map on $\R^n$. 
Denote the density of $\mu_t$ with respect to $m$ by $\rho_t$. Then we obtain 
\begin{align}
\Ent_m(\mu)-\Ent_m(\mu_t) &= \int_{\R^n}(\rho \log \rho - \rho_t\log\rho_t) ~dm \notag\\
&\leq \int_{\R^n}(\log\rho +1)(\rho - \rho_t) ~dm \label{hh}\\
&= \int_{\R^n}\log\rho ~d\mu - \int_{\R^n}\log\rho ~d\mu_t \notag\\
&= \int_{\R^n}(\log\rho(x) - \log\rho((1-t)x + tT(x))) ~d\mu(x) \notag\\
&= t\int_{\R^n}\inn{\nabla\log\rho(x)}{x-T(x)} ~d\mu(x) + o(t) \notag\\
&\leq t\left(\int_{\R^n}\|\nabla\log\rho(x)\|_2^2 ~d\mu(x)\right)^{\frac{1}{2}} \left(\int_{\R^n}\|x-T(x)\|_2^2 ~d\mu(x)\right)^{\frac{1}{2}}+ o(t) \label{hhh},
\end{align}
where we used $x\log x-y\log y\leq (\log x+1)(x-y)$ for any $x,y>0$ in (\ref{hh}) by the convexity of $x\log x$, and the Cauchy-Schwarz inequality in (\ref{hhh}). 
Hence, combining the inequality (\ref{hhh}) with (\ref{h}), and letting $t\to+0$, we have the desired inequality. 
\end{proof}

\begin{Thm}[Talagrand inequality]\label{TI}
Let $m = e^{-V}\mathcal{L}^n$ be a probability measure on $\R^n$ such that $V \in C^{\infty}(\R^n)$ is $\kappa$-convex for some $\kappa>0$. 
Then for any $\mu\in P_2(\R^n)$, we have 
\[
\frac{1}{2}W_2^2(\mu, m) \leq \frac{1}{\kappa} \Ent_m(\mu).
\]
\end{Thm}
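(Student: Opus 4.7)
The plan is to derive the inequality directly from the $\kappa$-displacement convexity of the relative entropy, namely Theorem \ref{convexity}(2), by exploiting the fact that $m$ is the unique minimizer of $\Ent_m$. Since the case $\Ent_m(\mu)=\infty$ is trivial, I may assume $\Ent_m(\mu)<\infty$, so that in particular $\mu$ is absolutely continuous with respect to $\mathcal{L}^n$ and there is a well-defined (displacement) geodesic from $\mu$ to $m$ in $(P_2(\R^n),W_2)$ obtained from Brenier's theorem (Theorem \ref{BT}).

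Concretely, I let $(\mu_t)_{t\in[0,1]}$ denote that geodesic, with $\mu_0=\mu$ and $\mu_1=m$. By Theorem \ref{convexity}(2), for every $t\in[0,1]$,
\[
\Ent_m(\mu_t) \leq (1-t)\Ent_m(\mu) + t\Ent_m(m) - \frac{\kappa}{2}t(1-t)W_2^2(\mu,m).
\]
Since $\Ent_m(m)=0$ and $\Ent_m(\mu_t)\geq 0$ by Jensen's inequality, rearranging gives
\[
\frac{\kappa}{2}t(1-t)W_2^2(\mu,m) \leq (1-t)\Ent_m(\mu).
\]
Dividing by $(1-t)>0$ for $t\in[0,1)$ and letting $t\to 1^-$ yields exactly the claimed bound $\frac{1}{2}W_2^2(\mu,m)\leq \frac{1}{\kappa}\Ent_m(\mu)$.

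There is essentially no obstacle here: once Theorem \ref{convexity}(2) is in hand, the Talagrand inequality is a one-line consequence of displacement convexity together with the vanishing of $\Ent_m(m)$. The only point that warrants a brief remark is the justification that Theorem \ref{convexity}(2) applies, which is automatic under $\Ent_m(\mu)<\infty$ since then $\mu\ll\mathcal{L}^n$ and the Wasserstein geodesic is the unique Brenier interpolation. An alternative route would be to start from the HWI inequality (Theorem \ref{HWI}) and optimize the resulting quadratic in $W_2(\mu,m)$ via Young's inequality to first get a logarithmic Sobolev inequality and then deduce Talagrand by the Otto--Villani argument, but that path is strictly longer and the direct convexity argument above is preferable.
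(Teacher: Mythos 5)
Your argument is correct and is essentially the paper's proof: both apply Theorem \ref{convexity}(2) to the geodesic connecting $\mu$ and $m$, use nonnegativity of $\Ent_m(\mu_t)$ to drop a term, and pass to the endpoint limit (you label the endpoints as $\mu_0=\mu$, $\mu_1=m$ and send $t\to 1^-$, while the paper takes $\mu_0=m$, $\mu_1=\mu$ and sends $t\to 0^+$, which is the same calculation up to relabeling).
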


\begin{proof}
Substituting $\mu_0=m$ and $\mu_1=\mu$ in Theorem \ref{convexity}(2), we obtain that for any $t\in(0,1)$, 
\[
\Ent_m(\mu_t) \leq t\Ent_m(\mu) - \frac{\kappa}{2}t(1-t)W_2^2(m,\mu),
\]
where $(\mu_t)_{t\in[0,1]}$ is the geodesic from $m$ to $\mu$. 
Since the relative entropy is nonnegative, the inequality above yields that 
\[
\frac{\kappa}{2}t(1-t)W_2^2(m,\mu) \leq t\Ent_m(\mu) .
\]
Dividing both sides of the inequality above by $t$, and letting $t\to+0$, we have the desired inequality. 
\end{proof}
In particular, it follows from the triangle inequality and the arithmetic-geometric mean inequality that 
\[
\frac{1}{2}W_2^2(\mu, \nu) \leq \frac{2}{\kappa} \left(\Ent_m(\mu) + \Ent_m(\nu) \right)
\]
for any $\mu,\nu\in P_2(\R^n)$, which is weaker than 
the symmetrized Talagrand inequality as in Theorem $\ref{STI}$. 
On the other hand, it is clear that substituting $m=\nu$ in Theorem \ref{STI} recovers Theorem \ref{TI} (up to the additional symmetry conditions). 

The HWI inequality (Theorem \ref{HWI}) yields the logarithmic Sobolev inequality as follows. 
\begin{Thm}[Logarithmic Sobolev inequality]\label{LSI}
Let $m = e^{-V}\mathcal{L}^n$ be a probability measure on $\R^n$ such that $V \in C^{\infty}(\R^n)$ is $\kappa$-convex for some $\kappa>0$, 
and $\mu = \rho m\in P_2(\R^n)$ such that $\log\rho\in H^1(\R^n,\mu)$ and $\Ent_m(\mu)<\infty$ are satisfied. 
Then we have 
\[
\Ent_m(\mu) \leq \frac{1}{2\kappa}I_m(\mu).
\]
\end{Thm}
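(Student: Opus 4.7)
The plan is to derive the logarithmic Sobolev inequality directly from the HWI inequality (Theorem \ref{HWI}) by a one-line application of the arithmetic--geometric mean inequality (equivalently, Young's inequality with weights chosen to cancel the $W_2^2$ terms). Since the hypotheses on $m$ and $\mu$ in Theorem \ref{LSI} are identical to those in Theorem \ref{HWI}, the latter is available with no additional verification.

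The key observation is that the right hand side of
\[
\Ent_m(\mu) \leq W_2(\mu,m)\sqrt{I_m(\mu)} - \frac{\kappa}{2}W_2^2(\mu,m)
\]
is a concave quadratic in the quantity $W := W_2(\mu,m)$, maximized at $W = \sqrt{I_m(\mu)}/\kappa$ with maximum value $I_m(\mu)/(2\kappa)$. Equivalently, by the weighted Young inequality $ab \le \tfrac{\kappa}{2}a^2 + \tfrac{1}{2\kappa}b^2$ applied with $a = W_2(\mu,m)$ and $b = \sqrt{I_m(\mu)}$, we have
\[
W_2(\mu,m)\sqrt{I_m(\mu)} \leq \frac{\kappa}{2}W_2^2(\mu,m) + \frac{1}{2\kappa}I_m(\mu).
\]
Substituting this bound into the HWI inequality makes the $W_2^2(\mu,m)$ terms cancel exactly, leaving $\Ent_m(\mu) \leq I_m(\mu)/(2\kappa)$, which is the desired inequality.

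There is essentially no obstacle here: the smoothness/integrability assumptions needed to apply Theorem \ref{HWI} (that $\log\rho\in H^1(\R^n,\mu)$ and $\Ent_m(\mu)<\infty$) are assumed in the statement of Theorem \ref{LSI} itself, so no additional approximation argument is required at this stage. If $I_m(\mu) = \infty$ the conclusion is trivial, and otherwise the argument above applies verbatim.
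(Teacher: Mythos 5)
Your proof is correct and is essentially the same as the paper's: the paper completes the square in $W_2(\mu,m)$ on the right-hand side of the HWI inequality, which is algebraically identical to your application of the weighted Young inequality, both giving the maximum value $I_m(\mu)/(2\kappa)$.
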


\begin{proof}
Completing the square on the right hand side of the HWI inequality in Theorem \ref{HWI} with respect to $W_2(\mu,m)$, we have 
\[
\Ent_m(\mu) \leq -\frac{\kappa}{2}\left(W_2(\mu,m)-\frac{1}{\kappa}\sqrt{I_m(\mu)}\right)^2 + \frac{1}{2\kappa}I_m(\mu), 
\]
which immediately yields the desired inequality. 
\end{proof}
Note that the Talagrand inequality (Theorem \ref{TI}) also follows from the logarithmic Sobolev inequality (Theorem \ref{LSI}) (see \cite{V2}).
The logarithmic Sobolev inequality (Theorem \ref{LSI}) yields the following inequality. 
\begin{Thm}[Poincar\'{e} inequality]\label{PI}
Let $m = e^{-V}\mathcal{L}^n$ be a probability measure on $\R^n$ such that $V \in C^{\infty}(\R^n)$ is $\kappa$-convex for some $\kappa>0$. 
Then for any $f\in H^1(\R^n,m)$ satisfying $\int_{\R^n}f ~dm =0$, we have 
\[
\kappa\int_{\R^n}f^2 ~dm \leq \int_{\R^n} \|\nabla f\|_2^2 ~dm.
\]
\end{Thm}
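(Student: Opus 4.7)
The plan is to deduce the Poincaré inequality from the logarithmic Sobolev inequality (Theorem \ref{LSI}) by the standard linearization trick: apply the LSI to a density that is an infinitesimal perturbation of the constant density $1$, and extract the quadratic term in the perturbation parameter.

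More concretely, given $f \in H^1(\R^n,m)$ with $\int_{\R^n} f\,dm = 0$, first I would reduce to the case where $f$ is bounded and smooth by a density argument, so that for $\varepsilon > 0$ sufficiently small the function $\rho_\varepsilon := 1 + \varepsilon f$ is a strictly positive density against $m$. Set $\mu_\varepsilon := \rho_\varepsilon\,m \in P_2(\R^n)$. Then I would apply Theorem \ref{LSI} to $\mu_\varepsilon$, which gives
\[
\Ent_m(\mu_\varepsilon) \leq \frac{1}{2\kappa} I_m(\mu_\varepsilon).
\]

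The next step is to expand both sides to order $\varepsilon^2$. Using the Taylor expansion $(1+u)\log(1+u) = u + u^2/2 + O(u^3)$ together with the mean-zero condition $\int f\,dm = 0$, I get
\[
\Ent_m(\mu_\varepsilon) = \int_{\R^n}(1+\varepsilon f)\log(1+\varepsilon f)\,dm = \frac{\varepsilon^2}{2}\int_{\R^n} f^2\,dm + O(\varepsilon^3).
\]
On the Fisher information side, $\nabla \log \rho_\varepsilon = \varepsilon \nabla f /(1+\varepsilon f)$, so
\[
I_m(\mu_\varepsilon) = \int_{\R^n}\frac{\varepsilon^2 \|\nabla f\|_2^2}{(1+\varepsilon f)^2}(1+\varepsilon f)\,dm = \varepsilon^2 \int_{\R^n}\|\nabla f\|_2^2\,dm + O(\varepsilon^3).
\]
Dividing both sides of the LSI by $\varepsilon^2$ and letting $\varepsilon \to 0$ yields
\[
\frac{1}{2}\int_{\R^n} f^2\,dm \leq \frac{1}{2\kappa}\int_{\R^n}\|\nabla f\|_2^2\,dm,
\]
which is the claimed Poincaré inequality for bounded smooth $f$.

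The only real obstacle is the density step at the beginning: one must check that for general $f \in H^1(\R^n, m)$ with zero mean, both sides of the Poincaré inequality are continuous along a suitable approximation by bounded smooth functions with zero mean (for instance, truncation followed by mollification, subtracting off the mean at the end). This is standard and can be invoked by appealing to the density of smooth compactly supported functions in $H^1(\R^n,m)$, so the substantive content is entirely the $\varepsilon^2$-expansion above.
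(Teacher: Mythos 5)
Your proof is correct and follows essentially the same approach as the paper: linearize the logarithmic Sobolev inequality (Theorem \ref{LSI}) around the constant density by setting $\rho_\varepsilon = 1 + \varepsilon f$, expand both sides to order $\varepsilon^2$, divide by $\varepsilon^2$, and let $\varepsilon \to 0$. The paper likewise reduces to bounded $f$ by truncation before applying this expansion.
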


\begin{proof}
By truncation, we assume $f\in L^{\infty}(\R^n)$. 
Set $\rho:=1+\varepsilon f$ for a small enough constant $\varepsilon>0$, and $\mu:=\rho m$.
Note that $\mu \in P_2(\R^n)$. By the logarithmic Sobolev inequality (Theorem \ref{LSI}), we have 
\begin{align}\label{i}
\int_{\R^n}\rho\log\rho ~dm \leq \frac{1}{2\kappa}\int_{\R^n}\|\nabla\log\rho\|_2^2\rho ~dm.
\end{align}
Expanding the left hand side of the inequality above at $\varepsilon=0$, we obtain 
\begin{align*}
\int_{\R^n}\rho\log\rho ~dm = \int_{\R^n}(1+\varepsilon f)\log(1+\varepsilon f) ~dm = \frac{\varepsilon^2}{2}\int_{\R^n}f^2 ~dm + o(\varepsilon^2). 
\end{align*}
On the other hand, expanding the right hand side yields that 
\begin{align*}
\frac{1}{2\kappa}\int_{\R^n}\|\nabla\log\rho\|_2^2\rho ~dm = \frac{1}{2\kappa}\int_{\R^n}\frac{\|\nabla\rho\|_2^2}{\rho} ~dm = \frac{\varepsilon^2}{2\kappa}\int_{\R^n}\frac{\|\nabla f\|_2^2}{1+\varepsilon f} ~dm.
\end{align*}
Hence dividing both sides of (\ref{i}) by $\varepsilon^2$, and letting $\varepsilon\to +0$, we have the desired inequality. 
\end{proof}
Note that the Poincar\'{e} inequality (Theorem \ref{PI}) also follows from the Talagrand inequality (Theorem \ref{TI}) (see \cite{V2}).

\section{Variants of the symmetrized Talagrand inequality}

In this section, we consider the meaning of the condition $\bary(\nu)=0$ in Theorem \ref{STI}(2).
For a probability measure $\mu$ on $\R^n$, its {\it barycenter} is defined by $\int_{\R^n}x ~d\mu(x)$, and 
denoted by $\bary(\mu)$.
For a probability measure $\mu$ on $\R^n$ and $a\in\R^n$, we denote by $\mu_a$ the probability measure translated by $a$: for any Borel subset $A\subset\R^n$, $\mu_a(A) = \mu(A-a)$, where
$A-a := \{x-a ~|~ x\in A\}$. 
Note that the barycenter of $\mu_a$ is $\bary(\mu_a)=\bary(\mu) +a$ for any probability measure $\mu$ on $\R^n$ and any $a\in\R^n$.

\begin{Prop}\label{translation}
Let $\nu\in P_2(\R^n)$. For any probability measure $\mu\in P_2(\R^n)$ and any $a\in\R^n$, it holds that
\begin{align}\label{aaaaaa}
\Ent_{\gamma_n}(\mu_a) - \frac{1}{2}W_2^2(\mu_a, \nu) = \Ent_{\gamma_n}(\mu) - \frac{1}{2}W_2^2(\mu, \nu) + \inn{\bary(\nu)}{a}.
\end{align}
In particular, if a probability measure $\mu\in P_2(\R^n)$ satisfies $\Ent_{\gamma_n}(\mu)<\infty$, then 
$\Ent_{\gamma_n}(\mu_a) - W_2^2(\mu_a, \nu)/2$ is independent of $a$ in the hyperplane orthogonal 
to $\bary(\nu)$.
Here, when $\bary(\nu)=0$, the hyperplane orthogonal 
to $\bary(\nu)$ means whole $\R^n$.
Therefore, $\bary(\nu)=0$ if and only if 
$\Ent_{\gamma_n}(\mu_a) - W_2^2(\mu_a, \nu)/2$ is independent of $a\in \R^n$.
\end{Prop}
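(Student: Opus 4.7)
The plan is to compute the two quantities on the left-hand side separately under translation, and then combine.

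First I would compute $\Ent_{\gamma_n}(\mu_a)$ directly from the definition. Writing $\mu = \rho\Lc^n$, we have $\mu_a = \rho(\cdot-a)\Lc^n$, and the Gaussian density is $(2\pi)^{-n/2}e^{-\|x\|_2^2/2}$. Applying the change of variables $y=x-a$, a short calculation gives
\begin{equation*}
\Ent_{\gamma_n}(\mu_a) = \Ent_{\gamma_n}(\mu) + \inn{\bary(\mu)}{a} + \tfrac{1}{2}\|a\|_2^2,
\end{equation*}
using that the Shannon entropy $\Ent$ is translation invariant and that $\int \|y+a\|_2^2\,d\mu(y) = \int\|y\|_2^2\,d\mu(y) + 2\inn{\bary(\mu)}{a}+\|a\|_2^2$. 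The finiteness assumption $\Ent_{\gamma_n}(\mu)<\infty$ ensures all these integrals are well defined.

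Next I would prove the transportation identity
\begin{equation*}
W_2^2(\mu_a,\nu) = W_2^2(\mu,\nu) + 2\inn{a}{\bary(\mu)-\bary(\nu)} + \|a\|_2^2
\end{equation*}
by a two-sided comparison of couplings. Let $\pi\in\Pi(\mu,\nu)$ be optimal; pushing forward by $(x,y)\mapsto(x+a,y)$ produces a coupling of $\mu_a$ and $\nu$, yielding the upper bound after expanding $\|x+a-y\|_2^2$ and integrating against $\pi$. The matching lower bound comes from the symmetric construction: starting from an optimal coupling $\pi'$ of $\mu_a$ and $\nu$ and pushing forward by $(x,y)\mapsto(x-a,y)$ produces a coupling of $\mu$ and $\nu$, and expanding $\|x-a-y\|_2^2$ together with $\bary(\mu_a)=\bary(\mu)+a$ gives the reverse inequality. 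Combining the two bounds yields the identity.

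Subtracting $\tfrac{1}{2}W_2^2(\mu_a,\nu)$ from $\Ent_{\gamma_n}(\mu_a)$ makes the cross term $\inn{a}{\bary(\mu)}$ and the quadratic term $\tfrac{1}{2}\|a\|_2^2$ cancel, leaving exactly $\inn{\bary(\nu)}{a}$, which is the claimed formula~(\ref{aaaaaa}). The remaining assertions are then immediate: the right-hand side is constant in $a$ along any hyperplane orthogonal to $\bary(\nu)$, and in particular for all $a\in\R^n$ when $\bary(\nu)=0$; conversely, if the left-hand side is independent of $a\in\R^n$ then choosing $a=\bary(\nu)$ forces $\|\bary(\nu)\|_2^2=0$, giving the final equivalence.

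I do not expect any serious obstacle here; the only point requiring some care is the two-sided comparison establishing the $W_2^2$ identity, since one must verify that the ``translated'' coupling is indeed admissible and track the $\bary$ terms correctly. The rest is a matter of bookkeeping with Gaussian densities.
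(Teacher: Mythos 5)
Your proposal is correct and decomposes the argument exactly as the paper does: establish a translation formula for $\Ent_{\gamma_n}$, a translation formula for $W_2^2$, and subtract. The entropy computation is the same in substance as the paper's (change of variables / translation invariance of the Shannon entropy plus the quadratic expansion of the potential).

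Where you genuinely diverge is in the Wasserstein translation identity. The paper works on the dual side: starting from $f,g$ admissible for $\|x-y\|_2^2/2$, it substitutes $F(x)=f(x+a)-\inn{x}{a}$, $G(y)=g(y)+\inn{y}{a}-\|a\|_2^2/2$, checks that $(F,G)$ is again admissible, applies the Kantorovich duality twice (once with the roles of $\mu$ and $\mu_a$, $a$ and $-a$ swapped), and combines. You instead argue on the primal side: push an optimal coupling of $(\mu,\nu)$ forward through $(x,y)\mapsto(x+a,y)$ to get an admissible coupling of $(\mu_a,\nu)$, expand $\|x+a-y\|_2^2$, and get the upper bound; the symmetric construction starting from an optimal coupling of $(\mu_a,\nu)$ gives the reverse, with the bookkeeping term $\bary(\mu_a)=\bary(\mu)+a$ handled correctly. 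Both routes are valid. Yours is a bit more elementary and self-contained (no need to invoke the duality theorem or the $C_b$ restriction in Remark~\ref{rem}); the paper's dual approach has the stylistic advantage that the same kind of potential substitution recurs throughout the optimal-transport literature. The remaining deductions --- the cancellation of $\inn{a}{\bary(\mu)}$ and $\tfrac12\|a\|_2^2$, the hyperplane observation, and taking $a=\bary(\nu)$ for the converse --- match the paper and are sound.
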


\begin{proof}
Firstly, we prove the following formula: for any $a\in\R^n$, it holds that
\begin{align}\label{c}
\frac{1}{2}W_2^2(\mu_a, \nu) = \frac{1}{2}W_2^2(\mu, \nu) + \inn{\bary(\mu)}{a} - \inn{\bary(\nu)}{a} + \frac{1}{2}\|a\|_2^2.
\end{align}

Fix $a\in \R^n$. Let $f, g\in C_b(\R^n)$ satisfy 
$f(x)+g(y)\leq \|x-y\|_2^2/2$ for any $x, y \in \R^n$ and define $F, G:\R^n\to \R$ by 
$F(x) := f(x+a) - \inn{x}{a}, G(y) := g(y) + \inn{y}{a} - \|a\|_2^2/2$, respectively. 
Then $F\in L^1(\mu), G\in L^1(\nu)$ and 
\begin{align*}
F(x) + G(y) &\leq \frac{1}{2}\|x+a-y\|_2^2-\inn{x}{a}+\inn{y}{a}-\frac{1}{2}\|a\|_2^2 \\
               &= \frac{1}{2}\|x-y\|_2^2
\end{align*}
holds for any $x, y\in\R^n$. Thus by the Kantorovich duality (Theorem \ref{KD}), it follows that
\begin{align*}
\frac{1}{2}W_2^2(\mu, \nu) &\geq \int_{\R^n}F(x) ~d\mu(x) + \int_{\R^n}G(y) ~d\nu(y) \\
                                  &=\int_{\R^n}f(x) ~d\mu_a(x) + \int_{\R^n}g(y) ~d\nu(y) - \inn{\bary(\mu)}{a} + \inn{\bary(\nu)}{a} - \frac{1}{2}\|a\|_2^2.
\end{align*}
Since functions $f, g$ were arbitrary in $C_b(\R^n)$, again using the Kantorovich duality (Theorem \ref{KD} and Remark \ref{rem}), we have
\begin{align}\label{b}
\frac{1}{2}W_2^2(\mu, \nu) \geq  \frac{1}{2}W_2^2(\mu_a, \nu) - \inn{\bary(\mu)}{a} + \inn{\bary(\nu)}{a} - \frac{1}{2}\|a\|_2^2.
\end{align}
Replacing $\mu$ by $\mu_a$, and $a$ by $-a$ in $(\ref{b})$, since $(\mu_a)_{-a} = \mu$, we have 
\begin{align*}
 \frac{1}{2}W_2^2(\mu_a, \nu) &\geq  \frac{1}{2}W_2^2(\mu, \nu) - \inn{\bary(\mu_a)}{-a} + \inn{\bary(\nu)}{-a} - \frac{1}{2}\|-a\|_2^2 \\
                                        &= \frac{1}{2}W_2^2(\mu, \nu) + \inn{\bary(\mu)}{a} - \inn{\bary(\nu)}{a} + \frac{1}{2}\|a\|_2^2.
\end{align*}
Hence, (\ref{c}) follows from this inequality and (\ref{b}).
 
We prove \eqref{aaaaaa}. By the definition of the relative entropy, \eqref{aaaaaa} is clear when $\mu$ is not absolutely continuous with respect to $\gamma_n$. Thus we may assume that $\mu\ll\gamma_n$. 
Let $\rho$ be 
the density of $\mu$ with respect to $\gamma_n$.
Then the density of $\mu_a$ with respect to $\gamma_n$ is $\rho(x-a)\exp(-\|x-a\|_2^2/2+\|x\|_2^2/2)$, which yields that 
\begin{align}\label{a}
\Ent_{\gamma_n}(\mu_a) &= \int_{\R^n}\left(\log \rho(x-a) -\frac{\|x-a\|_2^2}{2} +\frac{\|x\|_2^2}{2}\right) ~d\mu_a(x) \notag \\
 &= \int_{\R^n}\log\rho(x) ~d\mu(x) + \int_{\R^n}\left(\inn{x}{a} + \frac{\|a\|_2^2}{2}\right) ~d\mu(x) \notag \\
 &= \Ent_{\gamma_n}(\mu) + \inn{\bary(\mu)}{a} + \frac{1}{2}\|a\|_2^2.
\end{align}
Combining this equality with (\ref{c}), we have the desired equality. 
\end{proof}

Combining Proposition \ref{translation} with Theorem \ref{STI}(2), we obtain a generalization of the symmetrized Talagrand inequality as follows. 
\begin{Cor}\label{SSTI}
For any $\mu, \nu\in P_2(\R^n)$, we have 
\begin{align}\label{q}
\frac{1}{2}W_2^2(\mu, \nu)\leq \Ent_{\gamma_n}(\mu)+\Ent_{\gamma_n}(\nu) - \inn{\bary(\mu)}{\bary(\nu)}.
\end{align}
The equality holds if and only if two probability measures $\mu$ and $\nu$ are Gaussian such that their covariant matrices are 
inverse to each other. 
In particular, if $\inn{\bary(\mu)}{\bary(\nu)} \geq 0$, then 
\[
\frac{1}{2}W_2^2(\mu, \nu)\leq \Ent_{\gamma_n}(\mu)+\Ent_{\gamma_n}(\nu).
\]
\end{Cor}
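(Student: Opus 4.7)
The plan is to reduce the inequality to Theorem \ref{STI}(2) by translating $\nu$ so its barycenter becomes zero, and then to use Proposition \ref{translation} to relate the entropy and Wasserstein terms before and after translation.

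More concretely, set $b := \bary(\nu)$ and $\tilde{\nu} := \nu_{-b}$, so that $\bary(\tilde{\nu}) = 0$. Theorem \ref{STI}(2) applied to the pair $(\mu, \tilde{\nu})$ yields
\[
\frac{1}{2} W_2^2(\mu, \tilde{\nu}) \leq \Ent_{\gamma_n}(\mu) + \Ent_{\gamma_n}(\tilde{\nu}).
\]
To remove the tildes on the right-hand side and produce the inner product correction, I would apply Proposition \ref{translation} with the roles of $\mu$ and $\nu$ swapped (the proposition is symmetric in the two arguments, the one being translated appears on the entropy side). Taking $a = -b$ in that version of the identity gives
\[
\Ent_{\gamma_n}(\tilde{\nu}) - \frac{1}{2} W_2^2(\mu, \tilde{\nu}) = \Ent_{\gamma_n}(\nu) - \frac{1}{2} W_2^2(\mu, \nu) - \langle \bary(\mu), b \rangle.
\]
Substituting this identity into the above inequality and simplifying, the $\Ent_{\gamma_n}(\tilde{\nu})$ terms cancel and one obtains exactly
\[
\frac{1}{2} W_2^2(\mu, \nu) \leq \Ent_{\gamma_n}(\mu) + \Ent_{\gamma_n}(\nu) - \langle \bary(\mu), \bary(\nu) \rangle,
\]
which is \eqref{q}.

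For the equality case, I would argue that equality in \eqref{q} forces equality in the application of Theorem \ref{STI}(2) to $(\mu, \tilde{\nu})$, since Proposition \ref{translation} is an identity. By the equality characterization in Theorem \ref{STI}(2), $\mu$ must be Gaussian with some positive definite covariance $A$ and center $a \in \mathbb{R}^n$, and $\tilde{\nu}$ must be Gaussian with center $0$ and covariance $A^{-1}$. Translating back, $\nu$ is Gaussian with center $b$ and covariance $A^{-1}$, giving the stated characterization. The ``in particular'' clause is immediate from \eqref{q} by dropping the nonpositive term $-\langle \bary(\mu), \bary(\nu)\rangle$ when the inner product is nonnegative.

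I do not anticipate a real obstacle: the whole argument is a bookkeeping exercise built on Proposition \ref{translation}. The one point requiring care is making sure the sign and the placement of the inner product terms in \eqref{aaaaaa} are applied correctly after interchanging the roles of $\mu$ and $\nu$, so that the correction term $-\langle \bary(\mu), \bary(\nu)\rangle$ emerges with the correct sign.
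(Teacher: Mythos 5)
Your proposal is correct and follows essentially the same route as the paper: translate one of the two measures so that its barycenter vanishes, apply Theorem~\ref{STI}(2), and then use the identity in Proposition~\ref{translation} to convert back. The only cosmetic difference is that you translate $\nu$ to $\tilde\nu=\nu_{-\bary(\nu)}$ whereas the paper translates $\mu$ to $\mu_{-\bary(\mu)}$; by symmetry of $W_2$ and the fact that Proposition~\ref{translation} holds for arbitrary pairs (so its two arguments may be relabelled), the two versions are equivalent, and your bookkeeping of the inner-product correction term is right. Your treatment of the equality case is also sound and in fact spells out a step the paper leaves implicit.
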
 

\begin{proof}
Set $a:=-\bary(\mu)$. Then by Proposition \ref{translation}, we have 
\begin{align*}
\Ent_{\gamma_n}(\mu_a) - \frac{1}{2}W_2^2(\mu_a, \nu) &= \Ent_{\gamma_n}(\mu) - \frac{1}{2}W_2^2(\mu, \nu) + \inn{\bary(\nu)}{a} \\
 &= \Ent_{\gamma_n}(\mu) - \frac{1}{2}W_2^2(\mu, \nu) - \inn{\bary(\nu)}{\bary(\mu)}.
\end{align*}
On the other hand, since $\bary(\mu_a)=0$, Theorem \ref{STI}(2) yields that  
\[
\Ent_{\gamma_n}(\mu_a) - \frac{1}{2}W_2^2(\mu_a, \nu) \geq -\Ent_{\gamma_n}(\nu),
\] 
which implies the former assertion. 

The latter assertion follows immediately from the former one. 
\end{proof}


Note that Corollary \ref{SSTI} generalizes Theorem \ref{STI}(2), and Corollary \ref{SSTI} is in fact a self-improvement of Theorem \ref{STI}(2). 
Moreover, \eqref{q} in Corollary \ref{SSTI} is invariant under translations of probability measures. 
Let $\mu, \nu\in P_2(\R^n)$ and $a, b\in\R^n$, and consider \eqref{q} for $\mu_a$ and $\nu_b$: 
\begin{align}\label{e}
\frac{1}{2}W_2^2(\mu_a, \nu_b)\leq \Ent_{\gamma_n}(\mu_a)+\Ent_{\gamma_n}(\nu_b) - \inn{\bary(\mu_a)}{\bary(\nu_b)}. 
\end{align}
Using (\ref{c}) twice for the left hand side, we have 
\begin{align*}
\frac{1}{2}W_2^2(\mu_a, \nu_b) &= \frac{1}{2}W_2^2(\mu, \nu_b) +\inn{\bary(\mu)}{a} - \inn{\bary(\nu_b)}{a} + \frac{1}{2}\|a\|_2^2 \notag\\
&= \frac{1}{2}W_2^2(\mu, \nu) + \inn{\bary(\nu)}{b} - \inn{\bary(\mu)}{b} +\frac{1}{2}\|b\|_2^2 \notag \\
&\hspace{1cm} +\inn{\bary(\mu)}{a} - \inn{\bary(\nu)}{a} - \inn{a}{b} + \frac{1}{2}\|a\|_2^2.
\end{align*}
On the other hand, using (\ref{a}) for the right hand side in \eqref{e}, we have 
\begin{align*}
&\Ent_{\gamma_n}(\mu_a)+\Ent_{\gamma_n}(\nu_b) - \inn{\bary(\mu_a)}{\bary(\nu_b)} \\
&= \Ent_{\gamma_n}(\mu) + \inn{\bary(\mu)}{a} +\frac{1}{2}\|a\|_2^2 + \Ent_{\gamma_n}(\nu) + \inn{\bary(\nu)}{b} + \frac{1}{2}\|b\|_2^2 \\
&\hspace{1cm}-\inn{\bary(\mu)}{\bary(\nu)} - \inn{\bary(\mu)}{b} -\inn{\bary(\nu)}{a} - \inn{a}{b}.
\end{align*}
Hence, the inequality \eqref{e} is equivalent to \eqref{q} in Corollary \ref{SSTI} for $\mu$ and $\nu$. 

Finally, we describe the relation to the main theorems in this paper.  
In Theorem \ref{STI}(2), the assumption of $\nu$ satisfying $\bary(\nu)=0$ is needed, and in Theorem \ref{STI}(1), 
the symmetry of $\nu$ is assumed which is stronger than $\bary(\nu)=0$. A probability measure on $\R^n$ is said to be {\it symmetric} if its density is even. 
Since Corollary \ref{SSTI} is proved by a self-improvement of Theorem \ref{STI}(2), 
Corollary \ref{SSTI} implies that the symmetrized Talagrand inequality under the condition $\inn{\bary(\mu)}{\bary(\nu)}\geq 0$ 
and the one under the condition $\bary(\nu)=0$ are essentially equivalent. 
In the present paper, our goals are to prove symmetrized Talagrand inequalities for general (not necessarily Gaussian  nor symmetric) probability measures under the condition $\bary(\mu)=\bary(\nu)$.

\section{Caffarelli's contraction theorem}
In this section, we introduce an important theorem to prove our main theorems.
This result was firstly proved in 2000 by Caffarelli as follows (\cite{C}): 
When $V$ is a smooth $1$-convex function on $\R^n$, $\mu=e^{-V}\mathcal{L}^n$ is a probability measure on $\R^n$, and $T$ is
the optimal transport map from the standard Gaussian measure $\gamma_n$ on $\R^n$ to $\mu$, then $T$ is a $1$-Lipschitz map. 
This theorem is called Caffarelli's contraction theorem on which there are a lot of alternative proofs and advanced studies (see \cite{V07}, \cite{KM}, \cite{M}, \cite{FGP}).
In the present paper, we prove an extended version, using the original Caffarelli's paper \cite{C} for reference. 
This result is well-known to experts, but the author could not find the complete proof in literatures, hence we give a proof here. 
Its claim is as follows. 

\begin{Thm}[Caffarelli's contraction theorem]\label{CCT}
Let $\mu_i = e^{-V_i}\mathcal{L}^n \in P_2(\R^n)$ $(i = 0,1)$ such that $V_i$ $(i=0, 1)$ is a smooth function on $\R^n$, 
and $T:\R^n\to\R^n$ be the optimal transport map from $\mu_0$ to $\mu_1$.
If for some $\kappa_0, \kappa_1 > 0$, $V_i$ $(i=0, 1)$ satisfy $\nabla^2V_0 \leq \kappa_0$ and $\nabla^2 V_1 \geq  \kappa_1$, then it holds that 
\[
\sup_{x\in\R^n} \nabla T(x) \leq \sqrt{\frac{\kappa_0}{\kappa_1}}.
\]
In particular, $T$ is a $\sqrt{\kappa_0/\kappa_1}$-Lipschitz map.
\end{Thm}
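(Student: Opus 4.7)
By Brenier's theorem, the optimal transport map is $T = \nabla\varphi$ for some convex Kantorovich potential $\varphi$, and since the densities $e^{-V_0}, e^{-V_1}$ are smooth and strictly positive on all of $\R^n$, Caffarelli's interior regularity theorem for the Monge-Amp\`ere equation gives $\varphi \in C^\infty(\R^n)$. I will work under this smoothness assumption throughout. The claim reduces to the pointwise bound $\lambda_{\max}(\nabla^2\varphi(x)) \leq \sqrt{\kappa_0/\kappa_1}$ for all $x$, which I obtain by a classical maximum-principle argument on the function $\lambda(x) := \lambda_{\max}(\nabla^2\varphi(x))$, starting from the logarithmic form of the Monge-Amp\`ere equation (Theorem \ref{MA}),
\[
\log\det\nabla^2\varphi(x) = V_1(\nabla\varphi(x)) - V_0(x).
\]

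Suppose for the moment that $\lambda$ attains its supremum at some $x_0 \in \R^n$. After an orthogonal change of coordinates I may assume $\nabla^2\varphi(x_0)$ is diagonal with $\varphi_{11}(x_0) = \lambda(x_0)$, so that the scalar function $x \mapsto \varphi_{11}(x)$ has a local maximum at $x_0$. This yields $\varphi_{11k}(x_0) = 0$ for every $k$, and $\sum_{ij}\varphi^{ij}(x_0)\,\varphi_{11ij}(x_0) \leq 0$ (using that $(\varphi^{ij}(x_0))$ is positive definite). I then differentiate the above identity twice in the direction $e_1$ via Jacobi's formula $\partial_k\log\det\nabla^2\varphi = \varphi^{ij}\varphi_{ijk}$ together with $\partial_k\varphi^{ij} = -\varphi^{ia}\varphi^{jb}\varphi_{abk}$. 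Evaluating at $x_0$, the diagonal form of $\nabla^2\varphi(x_0)$ reduces the $\nabla^2 V_1$ double sum to a single term, and the first-order vanishing $\varphi_{11k}(x_0) = 0$ kills the $\nabla V_1$ contribution, giving
\[
\sum_{ij}\varphi^{ij}\varphi_{11ij}(x_0) = \bigl\|A^{-1/2}A_1 A^{-1/2}\bigr\|_{HS}^2 + \partial_{11}V_1\bigl(\nabla\varphi(x_0)\bigr)\,\varphi_{11}(x_0)^2 - \partial_{11}V_0(x_0),
\]
where $A := \nabla^2\varphi(x_0)$ and $A_1 := (\varphi_{ij1}(x_0))_{ij}$. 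The three terms on the right are respectively $\geq 0$, $\geq \kappa_1\varphi_{11}(x_0)^2$ (by $\nabla^2V_1 \geq \kappa_1$), and $\geq -\kappa_0$ (by $\nabla^2V_0 \leq \kappa_0$), so combining with $\sum_{ij}\varphi^{ij}\varphi_{11ij}(x_0) \leq 0$ gives $\kappa_1\varphi_{11}(x_0)^2 \leq \kappa_0$, i.e., $\lambda(x_0) \leq \sqrt{\kappa_0/\kappa_1}$ as required.

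\textbf{Main obstacle.} The computation above is valid only when $\sup_x \lambda(x)$ is attained, which is not automatic on the non-compact space $\R^n$, and I expect this attainment issue to be the main technical point. The plan is to reduce to this case by a two-level approximation: first replace $V_i$ by $V_i + \varepsilon\|x\|_2^2/2$ (with appropriate normalization), preserving the hypotheses with shifted constants $\kappa_0 + \varepsilon$ and $\kappa_1 + \varepsilon$ and enforcing super-Gaussian decay of both densities; then apply the maximum-principle computation to the penalized function $\lambda - \eta\|x\|_2^2$, whose supremum is attained for every $\eta > 0$, and let $\eta \downarrow 0$. This yields the bound $\sqrt{(\kappa_0 + \varepsilon)/(\kappa_1 + \varepsilon)}$ for each $\varepsilon > 0$, and standard stability of optimal transport maps under smooth convergence of the densities lets $\varepsilon \downarrow 0$ to recover the original claim.
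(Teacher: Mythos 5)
Your local maximum-principle computation at an assumed interior maximum of $\lambda(x)=\lambda_{\max}(\nabla^2\varphi(x))$ is correct, and it is in one respect sharper than the paper's argument: differentiating the logarithmic Monge--Amp\`ere equation twice in the top eigendirection and discarding the nonnegative term gives $\kappa_1\lambda(x_0)^2\leq\kappa_0$ directly, with no loss of constant. The paper instead works with a fixed finite second difference $\delta\phi_e(x)=\phi(x+he)+\phi(x-he)-2\phi(x)$, only obtains $\nabla^2\phi\leq 2\sqrt{\kappa_0/\kappa_1}$ on a first pass, and then needs an iteration (the recursion $a_{k+1}=(2a_k-1)/a_k$) to push the constant down to $1$. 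So far so good.

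The gap is exactly where you flag it, but your proposed fix does not close it. For the penalized function $\lambda(x)-\eta\|x\|_2^2$ to attain its supremum you need $\lambda(x)=o(\|x\|_2^2)$ at infinity, and there is no a priori reason the Hessian of $\varphi$ grows subquadratically --- boundedness of $\lambda$ is precisely what the theorem is trying to prove. Worse, even granting attainment at some $x_\eta$, the penalization changes the first- and second-order conditions: you now have $\varphi_{11k}(x_\eta)=2\eta(x_\eta)_k$ and $\sum_{ij}\varphi^{ij}\varphi_{11ij}(x_\eta)\leq 2\eta\,\mathrm{tr}\bigl(\nabla^2\varphi(x_\eta)^{-1}\bigr)$, so the previously-discarded first-order term reappears as $2\eta\,\langle\nabla V_1(\nabla\varphi(x_\eta)),x_\eta\rangle$ and a new trace term enters on the right. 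Neither is obviously $o(1)$ as $\eta\downarrow 0$: $\|x_\eta\|$ may grow like $\eta^{-1/2}$, and with only $\nabla^2V_0\leq\kappa_0$ (no lower bound) the inverse transport gives no lower bound on $\nabla^2\varphi$, so $\mathrm{tr}(\nabla^2\varphi^{-1})$ is uncontrolled. Adding $\varepsilon\|x\|_2^2/2$ to $V_1$ does not help with either issue, and it does not even make $\mu_0$ light-tailed, since $V_0$ has no convexity lower bound to begin with.

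The paper's route around this is different and is worth internalizing: it truncates the \emph{target} $\mu_1$ to a ball $B_r$, which forces $\nabla\phi$ to be bounded. Boundedness of $\nabla\phi$ does \emph{not} bound $\nabla^2\phi$, but it does bound the fixed-$h$ second difference by $0\leq\delta\phi_e(x)\leq h\langle T(x+he)-T(x-he),e\rangle\leq 2rh$, and Lemma \ref{lim} upgrades this to $\delta\phi_e(x)\to 0$ at infinity, so the supremum of $\delta\phi_e$ over $(x,e)$ is actually attained. The finite-difference quantity $\delta\phi_e$ is therefore essential: it is the object that remains bounded and attains its maximum in a setting where the pointwise Hessian need not. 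Dividing by $h^2$ and sending $h\to 0$ only at the very end recovers the Hessian bound. If you want to keep your cleaner local computation, you would still need to trade the pointwise Hessian for the finite difference (or supply a genuine argument that $\lambda$ is, say, sublinear for the truncated problem) before the penalization step; as written, the approximation scheme does not deliver the needed attainment.
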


In order to prove Theorem \ref{CCT}, we need the following lemma which is a modification from the original lemma (see \cite{C}, \cite{V07}). 
\begin{Lem}\label{lim}
Let $\mu_i$ $(i = 0,1)$ be probability measures which are absolutely continuous with respect to the Lebesgue measure on $\R^n$.
Assume that $\mu_0$ and $\mu_1$ satisfy $\mathrm{supp}(\mu_0)=\R^n$, $\mathrm{supp}(\mu_1) =\mathrm{B}_r$ for some $r>0$, 
and the density $g$ of $\mu_1$ is continuous on $\mathrm{B}_r$ and satisfies $\min\{g(x)~|~x\in\mathrm{B}_r\}>0$,
where $\mathrm{B}_r$ is the closed ball in $\R^n$ whose center is the origin and radius is $r$. 
Let $T$ be the optimal transport map from $\mu_0$ to $\mu_1$. Then it holds that 
\[
\lim_{\|x\|_2\to\infty}\left(T(x) - r\frac{x}{\|x\|_2}\right) = 0.
\]
\end{Lem}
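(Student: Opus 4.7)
The plan is to use the fact that, by Brenier's theorem (Theorem \ref{BT}), the optimal transport map $T$ is the gradient of a convex function, and hence is monotone: $\inn{T(x)-T(y)}{x-y}\geq 0$ wherever it is defined. Combined with the fact that $T_{\#}\mu_0 = \mu_1$ has strictly positive continuous density on all of $\mathrm{B}_r$, this should force the direction of $T(x)$ to align with that of $x$ when $\|x\|_2$ is large; geometrically, monotonicity drives $T(x)$ as far as possible inside $\mathrm{B}_r$ in the direction $x/\|x\|_2$, which singles out $rx/\|x\|_2$.

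The main step is a subsequence argument. Take any sequence $(x_k)_{k\geq 1}$ in the (full $\mu_0$-measure) domain of $T$ with $\|x_k\|_2\to\infty$. After extraction one may assume $x_k/\|x_k\|_2 \to u\in \Sbb^{n-1}$ and $T(x_k)\to z$ for some $z\in \mathrm{B}_r$, using that $T(x_k)\in \mathrm{B}_r$ holds $\mu_0$-a.e. For any $y$ in the domain of $T$, monotonicity gives
\[
\inn{T(x_k) - T(y)}{x_k/\|x_k\|_2} \geq \inn{T(x_k) - T(y)}{y/\|x_k\|_2},
\]
and letting $k\to\infty$ produces $\inn{z - T(y)}{u}\geq 0$, i.e.\ $\inn{u}{z}\geq \inn{u}{T(y)}$ for $\mu_0$-a.e.\ $y$.

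Next I would invoke the hypothesis $\min_{\mathrm{B}_r} g>0$: this ensures $\mu_1(U)>0$ for every nonempty relatively open $U\subset \mathrm{B}_r$, so the image of $T$ (restricted to its domain) is dense in $\mathrm{B}_r$. Taking the supremum of the previous bound over such $y$ yields $\inn{u}{z}\geq \sup_{w\in\mathrm{B}_r}\inn{u}{w}=r$, and combined with $\|z\|_2\leq r$, the Cauchy--Schwarz inequality forces equality and hence $z=ru$. A standard contradiction argument then promotes this to the full limit: if $T(x)-rx/\|x\|_2$ failed to tend to zero as $\|x\|_2\to\infty$ along the domain of $T$, one could extract a sequence violating the conclusion $z=ru$ just established.

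The main obstacle I anticipate is twofold. First, there is the almost-everywhere technicality: $T$ is defined only on a (dense) full $\mu_0$-measure set, so one must argue that working on this set is sufficient and use continuity of $T=\nabla\varphi$ on its domain of differentiability. Second, one must check that the strict positivity hypothesis on $g$ is doing essential work: without it, the image of $T$ could concentrate in the interior of $\mathrm{B}_r$ and the lower bound $\inn{u}{z}\geq r$ would fail. Modulo these technicalities, the argument is essentially a rigidity consequence of cyclical monotonicity (Theorem \ref{CM}) taken in the limit $\|x\|_2\to\infty$.
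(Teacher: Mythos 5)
Your proof is correct, and it takes a genuinely different route from the paper's. The paper fixes a cone aperture $\theta\in(0,\pi/2)$, uses cyclic monotonicity of ${\rm supp}((id,T)_{\#}\mu_0)$ to establish a cone containment $T^{-1}(\Gamma_{T(x),\theta}\cap\mathrm{B}_r)\subset\Gamma_{x,\theta}$, and then runs a measure estimate: the hypothesis $\min_{\mathrm{B}_r}g>0$ converts $\mu_0(\Gamma_{x,\theta})\to 0$ into $\Lc^n(\Gamma_{T(x),\theta}\cap\mathrm{B}_r)\to 0$, and sending $\theta\to\pi/2$ squeezes $T(x)$ to the boundary point $rx/\|x\|_2$. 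Your argument instead uses only the pairwise (gradient) monotonicity $\inn{T(x)-T(y)}{x-y}\geq 0$ from Brenier, passes to a convergent subsequence $x_k/\|x_k\|_2\to u$, $T(x_k)\to z$, derives $\inn{u}{z}\geq\inn{u}{T(y)}$ for $\mu_0$-a.e.\ $y$, and then uses density of the image of $T$ in $\mathrm{B}_r$ to push the right-hand side up to $r$, forcing $z=ru$ by Cauchy--Schwarz and rigidity. This is cleaner and more elementary in that it trades the explicit quantitative cone/measure bookkeeping for a soft compactness argument; notably, for the density step you actually only need $\mathrm{supp}(\mu_1)=\mathrm{B}_r$, so the full strength of $\min_{\mathrm{B}_r}g>0$ is not required by your route (whereas it is essential to the paper's inequality $(\ref{f})$). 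The almost-everywhere technicality you flag is handled the same way in both proofs, via $T=\nabla\varphi$ on the differentiability set of the Kantorovich potential, which carries full $\mu_0$-measure and is dense since $\mathrm{supp}(\mu_0)=\R^n$; and the observation that $T(x_k)\in\mathrm{B}_r$ $\mu_0$-a.e.\ (so that one may extract a convergent subsequence in $\mathrm{B}_r$) is exactly what is needed.
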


\begin{proof}
Fix $x\in\R^n\setminus \{0\}$ and $\theta\in(0,\pi/2)$, and set $y:=T(x)$ and $\Gamma_{y,\theta}:=\{y'\in\R^n\setminus \{ y \} ~|~\angle(x,y'-y)\leq \theta \}$, 
where $\angle(w,z)\in[0,\pi]$ is the angle between $w$ and $z$ in $\R^n\setminus\{ 0 \}$. 
Now, fix $y'\in\Gamma_{y,\theta}\cap\mathrm{B}_r$, and take $x'\in\R^n\setminus\{ x \}$ satisfying $y'=T(x')$. 
Since ${\rm supp}((id, T)_{\#}\mu_0)\subset \R^n\times\R^n$ is cyclically monotone (Theorem \ref{CM}), it follows that $\inn{x'-x}{y'-y}\geq 0$, which implies that 
$\angle(x'-x,y'-y)\leq \pi/2$. 
Therefore, it follows from the triangle inequality for the angle that $\angle(x,x'-x)\leq\pi/2+\theta$.
Hence, setting
\[
\Gamma_{x,\theta}:=\left\{z\in\R^n\setminus\{ x \} ~{\Big |}~\angle(x,z-x) \leq \frac{\pi}{2}+\theta\right\},
\]
we have $x'\in\Gamma_{x,\theta}$. 
Since $y'\in\Gamma_{y,\theta}\cap\mathrm{B}_r$ was arbitrary, it follows that $T^{-1}(\Gamma_{y,\theta} \cap\mathrm{B}_r)\subset \Gamma_{x,\theta}$. Thus we have 
\begin{align}\label{f}
\min_{z\in\mathrm{B}_r} g(z) \cdot\mathcal{L}^n(\Gamma_{y,\theta}\cap\mathrm{B}_r) \leq \mu_1(\Gamma_{y,\theta}\cap\mathrm{B}_r) = \mu_0(T^{-1}(\Gamma_{y,\theta}\cap\mathrm{B}_r)) \leq \mu_0(\Gamma_{x,\theta}).
\end{align}
On the other hand, $\min_{z\in\mathrm{B}_r} g(z)>0$ by the assumption of $g$, and $\lim_{\|x\|_2\to\infty}\mu_0(\Gamma_{x,\theta})=0$ by the definition. 
Therefore, it follows from ($\ref{f}$) that $\lim_{\|x\|_2\to\infty} \mathcal{L}^n(\Gamma_{y,\theta}\cap\mathrm{B}_r)=0$. 
Finally, letting $\theta\to\pi/2$, we obtain the claim. 
\end{proof}

\begin{proof}[Proof of Theorem \ref{CCT}]
By taking an approximating sequence (for instance replacing $\mu_1$ by the normalization of $\mu_1|_{B_r}$ for $r>0$), 
we may assume that $\mu_0, \mu_1$ satisfy the assumptions in Lemma \ref{lim} (see \cite[Exercise 2.17]{V1}). 
Let $\phi$ be a Kantorovich potential such that $T=\nabla\phi$. 
Fix $h>0$, and for $x\in\R^n$ and $e\in\Sbb^{n-1}$, set $\delta\phi(x,e)=\delta\phi_e(x):=\phi(x+he)+\phi(x-he)-2\phi(x)$.

{\bf Step 1.} On $\R^n$, we prove  
\begin{align*}
\nabla T=\nabla^2\phi \leq 2\sqrt{\frac{\kappa_0}{\kappa_1}}.
\end{align*}

Since $\phi$ is convex, it follows that 
\[
0\leq\delta\phi_e(x)\leq \inn{\nabla\phi(x+he)}{he} + \inn{\nabla\phi(x-he)}{-he}=h\inn{T(x+he)}{e} - h\inn{T(x-he)}{e}
\]
for any $x\in\R^n$ and any $e\in\Sbb^{n-1}$. Since $\lim_{\|x\|_2\to\infty}(\inn{T(x+he)}{e} - \inn{T(x-he)}{e})=0$ by Lemma $\ref{lim}$,
$\delta\phi_e$ has a maximizing point for any $e\in\Sbb^{n-1}$.
Now, let $(x_0,e_0)\in\R^n\times\Sbb^{n-1}$ be a point attaining the maximum of $\delta\phi$. Then we have 
\begin{align}
\nabla\phi(x_0+he_0) + \nabla\phi(x_0-he_0) - 2\nabla\phi(x_0)=0 \label{g}, 
\end{align}
and
\begin{align}
\inn{\nabla\phi(x_0+he_0)}{v}-\inn{\nabla\phi(x_0-he_0)}{v} = 0 \quad \forall v\in {e_0}^{\perp} \label{gg}, 
\end{align}
where ${e_0}^{\perp}$ is the subspace in $\R^n$ whose elements are perpendicular to $e_0$. 
(\ref{gg}) yields that there exists $\alpha\in\R$ such that $\nabla\phi(x_0+he_0) - \nabla\phi(x_0-he_0) = \alpha e_0$. 
Combining this with (\ref{g}), we have 
\begin{align}\label{gggg}
\nabla\phi(x_0\pm he_0) =\nabla\phi(x_0) \pm \frac{\alpha}{2} e_0.
\end{align}
On the other hand, by the Monge-Amp\`{e}re equation (Theorem \ref{MA}), it follows that 
\begin{align}\label{ggg}
&\log\det \nabla^2\phi(x_0+he_0) +\log\det \nabla^2\phi(x_0-he_0) -2\log\det \nabla^2\phi(x_0) \notag \\
&=V_1(\nabla\phi(x_0+he_0)) + V_1(\nabla\phi(x_0-he_0)) -2V_1(\nabla\phi(x_0)) - V_0(x_0+he_0) -V_0(x_0-he_0) + 2V_0(x_0).
\end{align}
Since $\log\det$ is concave (see Lemma \ref{logdet} below),
\[
\log\det \nabla^2\phi(x_0+he_0) +\log\det \nabla^2\phi(x_0-he_0) -2\log\det \nabla^2\phi(x_0) \leq \inn{\nabla[\log\det] \left(\nabla^2\phi(x_0)\right)}{\nabla^2\delta\phi_{e_0}(x_0)}.
\]
$\nabla[\log\det] \left(\nabla^2\phi(x_0)\right)$ equals $\nabla^2\phi(x_0)^{-1}$ which is a positive definite symmetric matrix, 
and $\nabla^2\delta\phi_{e_0}(x_0)$ is a negative semi-definite symmetric matrix by the definition of $x_0$. Hence, 
the left hand side of (\ref{ggg}) is nonpositive. 
Moreover, since $\nabla^2V_0\leq\kappa_0$ and $\nabla^2V_1\geq \kappa_1$ by assumptions, 
combining this with (\ref{gggg}), we obtain 
\[
V_0(x_0+he_0) +V_0(x_0-he_0) - 2V_0(x_0) \leq \kappa_0h^2,
\]
and
\begin{align*}
&V_1(\nabla\phi(x_0+he_0)) + V_1(\nabla\phi(x_0-he_0)) -2V_1(\nabla\phi(x_0)) \\
&= V_1\left(\nabla\phi(x_0) + \frac{\alpha}{2}e_0\right) + V_1\left(\nabla\phi(x_0) - \frac{\alpha}{2} e_0\right) -2V_1(\nabla\phi(x_0)) \\
&\geq  \frac{\kappa_1\alpha^2}{4}. 
\end{align*}
Thus by (\ref{ggg}), it follows that $|\alpha|\leq 2h\sqrt{\kappa_0/\kappa_1}$. 
Again, by the convexity of $\phi$ and (\ref{gggg}), for any $(x,e)\in\R^n\times\Sbb^{n-1}$, we have 
\begin{align}\label{ggggg}
\delta\phi_e(x)\leq \delta\phi_{e_0}(x_0) &\leq \inn{\nabla\phi(x_0+he_0)}{he_0} + \inn{\nabla\phi(x_0-he_0)}{-he_0} \notag\\
& = \inn{\nabla\phi(x_0)+\frac{\alpha}{2} e_0}{he_0} + \inn{\nabla\phi(x_0)-\frac{\alpha}{2} e_0}{-he_0} \notag \\
& = \alpha h \notag \\
&\leq 2\sqrt{\frac{\kappa_0}{\kappa_1}}h^2.
\end{align}
Finally, dividing both sides above by $h^2$ and letting $h\to +0$, we have the desired inequality. 

{\bf Step 2.} We prove the main claim. Set $M:=\sqrt{\kappa_0/\kappa_1}$, 
$a_1:=2$, and assume that for some $a_k>1$ $(k\in\mathbb{N})$, $\nabla T=\nabla^2\phi \leq a_kM$ is satisfied on $\R^n$. 
Now, by the definition of $\delta\phi$, it follows that 
\begin{align}\label{gggggg}
\delta\phi_{e_0}(x_0)=\int_0^h\inn{\nabla\phi(x_0+te_0)-\nabla\phi(x_0-te_0)}{e_0} ~dt.
\end{align}
Then by the assumption above, for any $t\in[0,h]$, we obtain 
\begin{align*}
\inn{\nabla\phi(x_0+te_0)-\nabla\phi(x_0-te_0)}{e_0}&= \int_0^t\inn{\nabla^2\phi(x_0+se_0)\cdot e_0+\nabla^2\phi(x_0-se_0)\cdot e_0}{e_0} ~ds \\
&\leq 2a_kMt.
\end{align*}
On the other hand, since $\phi$ is convex, $t\mapsto \inn{\nabla\phi(x_0+te_0)-\nabla\phi(x_0-te_0)}{e_0}$ is monotone increasing. 
Hence for any $t\in[0,h]$, by the same computations as \eqref{ggggg}, we have 
\begin{align*}
\inn{\nabla\phi(x_0+te_0)-\nabla\phi(x_0-te_0)}{e_0}&\leq \inn{\nabla\phi(x_0+he_0)-\nabla\phi(x_0-he_0)}{e_0}\\
&\leq 2Mh.
\end{align*}
Therefore, for any $(x,e)\in\R^n\times\Sbb^{n-1}$, it follows from (\ref{gggggg}) that 
\begin{align*}
\delta\phi_e(x)&\leq \delta\phi_{e_0}(x_0)\\
&= \int_0^h\inn{\nabla\phi(x_0+te_0)-\nabla\phi(x_0-te_0)}{e_0} ~dt \\
&\leq \int_0^h \min\{2a_kMt,2Mh\} ~dt\\
&= \frac{2a_k-1}{a_k}Mh^2,
\end{align*}
and dividing both sides above by $h^2$, and letting $h\to +0$, we have 
\[
\nabla^2\phi \leq \frac{2a_k-1}{a_k}M
\]
on $\R^n$. Set $a_{k+1}:=(2a_k-1)/a_k$, then the positive sequence $(a_k)_{k\in \mathbb{N}}$ $(a_1=2)$ satisfies
$\lim_{k\to\infty}a_k=1$, and hence the desired inequality follows. 

The assertion that $T$ is $\sqrt{\kappa_0/\kappa_1}$-Lipschitz follows from for any $x,y\in\R^n$, 
\begin{align*}
\| T(y)-T(x)\|_2^2 &= \int_0^1\int_0^1 \inn{\nabla T((1-t)x+ty)\cdot (y-x)}{\nabla T((1-s)x+sy)\cdot (y-x)} ~dsdt \\
&\leq \frac{\kappa_0}{\kappa_1} \| y-x\|_2^2
\end{align*}
by the former claim. 
\end{proof}

\section{Main theorems and their applications to functional inequalities}
In this section, we prove Theorems \ref{MT1} and \ref{MT2}. The basic idea comes from \cite{CFP} by Courtade, Fathi and Pananjady, in which  
they proved the refined entropy power inequality.
In addition, we give a new and direct proof of Theorem $\ref{STI}$ on the real line in Subsection 5.3. 

\subsection{Proof of Theorem \ref{MT1}}

In order to prove the main theorems, we need the next lemma, which is also used in the next subsection. 
For simplicity, we introduce the following notations. 
For any positive definite symmetric matrix $P\in \R^{n\times n}$, we denote its maximal eigenvalue by $\lambda_P$, 
and the identity matrix by $I_n \in \R^{n\times n}$. 
\begin{Lem}[\cite{CFP}]\label{logdet}
Let $A, B \in \R^{n\times n}$ be positive definite symmetric matrices. 
Then for any $t\in[0,1]$, we have 
\[
\log\det\left((1-t)A + tB\right) \geq (1-t)\log\det A + t\log\det B + \frac{1}{2}t(1-t)\frac{1}{\max\{\lambda_A^2, \lambda_B^2\}}\|A - B\|_{HS}^2,
\]
where $\|\cdot\|_{HS}$ is the Hilbert-Schmidt norm. 
\end{Lem}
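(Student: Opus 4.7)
The plan is to reduce the claim to a quantitative one-variable concavity statement for $f(t):=\log\det M_t$, where $M_t:=(1-t)A+tB$. Since $A,B$ (hence $M_t$) are positive definite, standard matrix calculus gives
\[
f'(t)=\mathrm{tr}\bigl(M_t^{-1}(B-A)\bigr),\qquad
f''(t)=-\mathrm{tr}\bigl(M_t^{-1}(B-A)M_t^{-1}(B-A)\bigr),
\]
so $f$ is concave and it suffices to show $f''(t)\le -c$ uniformly in $t\in[0,1]$ with
\[
c\;=\;\frac{1}{\max\{\lambda_A^2,\lambda_B^2\}}\,\|A-B\|_{HS}^2.
\]

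The core computation is a lower bound on $-f''(t)$. Writing $P:=M_t^{-1}$ and $C:=B-A$ (symmetric), one has the identity
\[
\mathrm{tr}(PCPC)\;=\;\mathrm{tr}\bigl((P^{1/2}CP^{1/2})^2\bigr)\;=\;\|P^{1/2}CP^{1/2}\|_{HS}^2.
\]
Now for any matrix $Y$ and positive definite symmetric $P$, the bound $\|P^{1/2}Y\|_{HS}^2=\sum_i\|P^{1/2}Ye_i\|_2^2\ge \lambda_{\min}(P)\|Y\|_{HS}^2$ holds. Applying this twice (first with $Y=CP^{1/2}$, then, after transposing, with $Y=C$) gives
\[
\|P^{1/2}CP^{1/2}\|_{HS}^2\;\ge\;\lambda_{\min}(P)^2\,\|C\|_{HS}^2.
\]
Finally, since $M_t\preceq\bigl((1-t)\lambda_A+t\lambda_B\bigr)I_n\preceq \max\{\lambda_A,\lambda_B\}\,I_n$, we get $\lambda_{\min}(M_t^{-1})\ge 1/\max\{\lambda_A,\lambda_B\}$, and hence $-f''(t)\ge c$ as claimed.

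Once the uniform bound $f''(t)\le -c$ is in hand, the lemma follows by a standard one-variable argument: the function $g(t):=f(t)+\tfrac{c}{2}t^2$ satisfies $g''\le 0$, so $g$ is concave on $[0,1]$ and hence $g(t)\ge (1-t)g(0)+tg(1)$, which rearranges to
\[
f(t)\;\ge\;(1-t)f(0)+tf(1)+\tfrac{c}{2}\,t(1-t),
\]
i.e.\ the desired inequality. I expect the only genuinely nontrivial step to be the operator inequality $\|P^{1/2}CP^{1/2}\|_{HS}^2\ge \lambda_{\min}(P)^2\|C\|_{HS}^2$; once that is established, everything else is bookkeeping around the Hessian of $\log\det$ and a concavity-to-gap conversion.
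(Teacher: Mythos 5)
The paper does not prove this lemma; it simply cites it from Courtade--Fathi--Pananjady \cite{CFP}. Your argument is correct: the computation $f''(t)=-\mathrm{tr}\bigl(M_t^{-1}(B-A)M_t^{-1}(B-A)\bigr)$, the identity and bound $\mathrm{tr}(PCPC)=\|P^{1/2}CP^{1/2}\|_{HS}^2\ge\lambda_{\min}(P)^2\|C\|_{HS}^2$, the estimate $M_t\preceq\max\{\lambda_A,\lambda_B\}I_n$ (hence $\lambda_{\min}(M_t^{-1})\ge 1/\max\{\lambda_A,\lambda_B\}$), and the concavity-to-gap rearrangement all check out, and this second-order Hessian argument is the natural one and essentially that of the cited source.
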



\begin{proof}[Proof of Theorem \ref{MT1}]
Let $T$ be the optimal transport map from $\mu_0$ to $\mu_1$. Then the geodesic $(\mu_t)_{t\in [0,1]}$ from $\mu_0$ to $\mu_1$ is represented as $\mu_t = ((1-t)id + tT)_{\#}\mu_0$,
where $id$ is the identity map on $\R^n$.
Let $\rho_t$ be the density of $\mu_t$ with respect to the Lebesgue measure. Then 
by the Monge-Amp\`{e}re equation (Theorem \ref{MA}), we have, for any $t\in[0,1]$ and $\Lc^n$-a.e. $x\in \R^n$, 
\[
\rho_0(x) = \rho_t((1-t)x + tT(x))\det((1-t)I_n + t\nabla T(x)).
\]
Hence, for any $t\in [0,1]$, we obtain 
\begin{align}\label{aa}
\Ent (\mu_t) &= \int_{\R^n}\log\rho_t ~d\mu_t \notag \\
 &= \int_{\R^n} \log\rho_t((1-t)x + tT(x)) ~d\mu_0(x) \notag \\
 &= \int_{\R^n} (\log\rho_0(x) - \log\det((1-t)I_n + t\nabla T(x))) ~d\mu_0(x)\notag  \\
 &= \Ent (\mu_0) - \int_{\R^n}\log\det((1-t)I_n + t\nabla T(x)) ~d\mu_0(x).
\end{align}
By Lemma \ref{logdet}, for $\Lc^n$-a.e. $x\in \R^n$, we have 
\begin{align}\label{aaa}
\log\det((1-t)I_n + t\nabla T(x)) \geq t\log\det\nabla T(x) + \frac{1}{2}t(1-t)\frac{1}{\max\left\{1, \lambda_{\nabla T(x)}^2\right\}}\|I_n-\nabla T(x)\|_{HS}^2.
\end{align}
On the other hand, by Caffarelli's contraction theorem (Theorem \ref{CCT}) and the assumption (ii), we have 
\begin{align}\label{aaaa}
\sup_{x\in\R^n} \lambda_{\nabla T(x)} \leq \sqrt{\frac{\kappa_0}{\kappa_1}}.
\end{align}
Therefore, combining (\ref{aa}) with (\ref{aaa}) and (\ref{aaaa}), we obtain 
\begin{align}\label{aaaaa}
\Ent (\mu_t) &\leq \Ent (\mu_0) - t\int_{\R^n}\log\det\nabla T ~d\mu_0 \notag \\
&\hspace{4cm}
- \frac{1}{2\max\left\{1, \frac{\kappa_0}{\kappa_1}\right\}}t(1-t)\int_{\R^n}\|I_n-\nabla T(x)\|_{HS}^2 ~d\mu_0(x) \notag \\
                &= (1-t)\Ent (\mu_0) + t\Ent (\mu_1) - \frac{1}{2}\min\left\{1, \frac{\kappa_1}{\kappa_0}\right\}t(1-t)\int_{\R^n}\|I_n-\nabla T(x)\|_{HS}^2~d\mu_0(x),
\end{align}
where we used the Monge-Amp\`{e}re equation (Theorem \ref{MA}) in the last equality. Moreover, it follows from the assumption (i) that 
$\int_{\R^n}(x - T(x))~d\mu_0(x) = \bary(\mu_0) - \bary(\mu_1) = 0$, which implies that by the assumption (iii), 
\[
C_{\mu_0}\int_{\R^n}(x_i - T_i(x))^2 ~d\mu_0(x) \leq \int_{\R^n}\|\nabla\left(x_i - T_i(x)\right)\|_2^2~d\mu_0(x)
\]
for $i = 1,2, \dots, n$, where $x_i$ and $T_i$ are the $i$-th components of $x$ and $T$, respectively. 
Summing up in $i$, 
we have 
\begin{align*}
C_{\mu_0}\int_{\R^n}\|x - T(x)\|_2^2~d\mu_0(x) &\leq \int_{\R^n}\|\nabla\left(x - T(x)\right)\|_{HS}^2~d\mu_0(x) \\
                                              &= \int_{\R^n}\|I_n - \nabla T(x)\|_{HS}^2~d\mu_0(x).
\end{align*}
Combining this inequality with (\ref{aaaaa}), by optimality of $T$, we have 
\begin{align*}
\Ent (\mu_t) &\leq (1-t)\Ent (\mu_0) + t\Ent (\mu_1) - \frac{1}{2}C_{\mu_0}\min\left\{1, \frac{\kappa_1}{\kappa_0}\right\}t(1-t)\int_{\R^n}\|x - T(x)\|_2^2~d\mu_0(x) \\
                 &= (1-t)\Ent (\mu_0) + t\Ent (\mu_1) - \frac{1}{2}C_{\mu_0}\min\left\{1, \frac{\kappa_1}{\kappa_0}\right\}t(1-t)W_2^2(\mu_0, \mu_1),
\end{align*}
which implies that $\Ent$ is $C_{\mu_0}\min\left\{1, \kappa_1/\kappa_0\right\}$-convex along $(\mu_t)_{t\in [0, 1]}$ in $(P_2(\R^n),W_2)$.
\end{proof}

\begin{Cor}\label{C1}
Let $\mu_i$ $(i = 0,1)$ and $(\mu_t)_{t\in [0, 1]}$ be as in Theorem \ref{MT1}, 
and $m = e^{-V}\mathcal{L}^n$ be a probability measure on $\R^n$ such that $V \in C^{\infty}(\R^n)$ is $\kappa$-convex for some $\kappa>0$. 
Then $\Ent_{m}$ is $\left(\kappa + C_{\mu_0}\min\left\{1, \kappa_1/\kappa_0 \right\}\right)$-convex along $(\mu_t)_{t\in [0, 1]}$ in $(P_2(\R^n),W_2)$.
\end{Cor}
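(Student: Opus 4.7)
The plan is to split the relative entropy into the Shannon entropy plus a potential term, prove displacement $\kappa$-convexity of the potential term, and then add the two $\kappa$-convexity estimates.

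First I would observe that if $\mu=\rho\,\Lc^n$, then the density of $\mu$ with respect to $m=e^{-V}\Lc^n$ is $\rho e^{V}$, so
\begin{align*}
\Ent_m(\mu)=\int_{\R^n}\rho e^V\log(\rho e^V)\,dm=\int_{\R^n}\rho\log\rho\,d\Lc^n+\int_{\R^n}V\,d\mu=\Ent(\mu)+\int_{\R^n}V\,d\mu.
\end{align*}
Applying this identity along the geodesic $(\mu_t)_{t\in[0,1]}$, it suffices to prove that each of the two summands $t\mapsto\Ent(\mu_t)$ and $t\mapsto\int V\,d\mu_t$ is displacement $\alpha$-convex with constants that sum to $\kappa+C_{\mu_0}\min\{1,\kappa_1/\kappa_0\}$, since $\alpha$-convexity is additive.

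For the first summand, Theorem \ref{MT1} gives exactly $C_{\mu_0}\min\{1,\kappa_1/\kappa_0\}$-convexity of $\Ent$ along $(\mu_t)_{t\in[0,1]}$, so nothing more is needed there. For the second summand, I would use the explicit representation $\mu_t=((1-t)id+tT)_\#\mu_0$, where $T$ is the optimal transport map from $\mu_0$ to $\mu_1$. By $\kappa$-convexity of $V$, for every $x\in\R^n$,
\begin{align*}
V\bigl((1-t)x+tT(x)\bigr)\leq (1-t)V(x)+tV(T(x))-\frac{\kappa}{2}t(1-t)\|x-T(x)\|_2^2.
\end{align*}
Integrating against $\mu_0$ and using $T_{\#}\mu_0=\mu_1$ together with the optimality identity $\int_{\R^n}\|x-T(x)\|_2^2\,d\mu_0(x)=W_2^2(\mu_0,\mu_1)$ gives the $\kappa$-convexity
\begin{align*}
\int_{\R^n}V\,d\mu_t\leq (1-t)\int_{\R^n}V\,d\mu_0+t\int_{\R^n}V\,d\mu_1-\frac{\kappa}{2}t(1-t)W_2^2(\mu_0,\mu_1).
\end{align*}
Adding this to the inequality from Theorem \ref{MT1} yields the desired $(\kappa+C_{\mu_0}\min\{1,\kappa_1/\kappa_0\})$-convexity of $\Ent_m$ along $(\mu_t)_{t\in[0,1]}$.

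There is no serious obstacle: the only point that requires care is the displacement $\kappa$-convexity of the potential functional $\mu\mapsto\int V\,d\mu$, which is standard and follows at once from the pointwise convexity inequality for $V$ combined with the geodesic representation via the Brenier map. Everything else is bookkeeping.
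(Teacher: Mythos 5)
Your proof is correct and follows essentially the same approach as the paper: decompose $\Ent_m = \Ent + \int V\,d\mu$, apply Theorem \ref{MT1} to the Shannon entropy term, and use the $\kappa$-convexity of $V$ for the potential term. The only difference is that you spell out the (routine) verification that $\mu\mapsto\int V\,d\mu$ is displacement $\kappa$-convex, a step the paper leaves implicit.
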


\begin{proof}
In general, for a probability measure $\zeta=\rho m$ on $\R^n$, it holds that 
\[
\Ent_m(\zeta)=\int_{\R^n}\log\rho ~d\zeta = \int_{\R^n}\log(\rho e^{-V}) ~d\zeta + \int_{\R^n} V ~d\zeta
=\Ent(\zeta)  + \int_{\R^n} V ~d\zeta.
\]
Hence, we obtain $\Ent_m(\mu_t) = \Ent(\mu_t) + \int_{\R^n}V ~d\mu_t$ for all $t\in [0,1]$. 
Since it follows from Theorem $\ref{MT1}$ that the first term of the right hand side is $C_{\mu_0}\min\left\{1, \kappa_1/\kappa_0\right\}$-convex along $(\mu_t)_{t\in [0, 1]}$, 
it suffices to prove that the second term is $\kappa$-convex along $(\mu_t)_{t\in [0, 1]}$, 
which is easily proved by the $\kappa$-convexity of $V$.
\end{proof}

\begin{proof}[Proof of Corollary \ref{MC1}]
It follows from Corollary \ref{C1} that for any $t\in[0,1]$, 
\[
\Ent_m (\mu_t) \leq (1-t)\Ent_m (\mu_0) + t\Ent_m (\mu_1) - \frac{1}{2}\left(\kappa + C_{\mu_0}\min\left\{1, \frac{\kappa_1}{\kappa_0}\right\}\right)t(1-t)W_2^2(\mu_0, \mu_1),
\]
where $(\mu_t)_{t\in [0,1]}$ is the geodesic from $\mu_0$ to $\mu_1$ in $(P_2(\R^n),W_2)$. 
The first inequality is proved by $\Ent_m(\mu_t) \geq 0$ in the case of $t=1/2$.

Then the remaining assertions immediately follow combined with Theorem \ref{PI}.
\end{proof}

Moreover, by similar proofs to Theorems \ref{HWI} and \ref{LSI}, Corollary \ref{C1} yields refined functional inequalities as follows.
\begin{Cor}\label{C13}
Let $m = e^{-V}\mathcal{L}^n$ be a probability measure on $\R^n$ such that $V \in C^{\infty}(\R^n)$ is $\kappa$-convex for some $\kappa>0$, and $\mu=e^{-W}\Lc^n\in P_2(\R^n)$ with $\Ent_m(\mu)<\infty$
where $W$ is a smooth function on $\R^n$. Moreover we assume the following three conditions: 
\begin{description}
\item{(i)} $\bary(m)=\bary(\mu)$.
\item{(ii)} $\nabla^2 W \leq \tau$ holds for some $\tau>0$.
\item{(iii)} $\mu$ satisfies the Poincar\'{e} inequality with $C_{\mu}>0$.
\end{description}
Then we have 
\begin{description}
\item{(1)} (HWI inequality) 
\[
\Ent_m(\mu) \leq W_2(\mu, m)\sqrt{I_m(\mu)} - \left(\frac{\kappa}{2} + \frac{C_{\mu}}{2}\min\left\{1, \frac{\kappa}{\tau} \right\}\right)W_2^2(\mu,m).
\]
\item{(2)} (logarithmic Sobolev inequality)
\[
\Ent_m(\mu) \leq \frac{1}{2}\cdot\frac{1}{\kappa + C_{\mu}\min\left\{1, \frac{\kappa}{\tau} \right\}} I_m(\mu).
\]
\end{description}
\end{Cor}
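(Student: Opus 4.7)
The plan is to apply Corollary \ref{C1} with $\mu_0 := \mu$ and $\mu_1 := m$, and then run the proofs of Theorems \ref{HWI} and \ref{LSI} verbatim with the improved convexity constant. To set up the application of Corollary \ref{C1}, I would check the three hypotheses of Theorem \ref{MT1}: hypothesis (i) is our assumption $\bary(\mu) = \bary(m)$; hypothesis (ii) holds with $\kappa_0 := \tau$ and $\kappa_1 := \kappa$, since $\nabla^2 W \leq \tau$ and the $\kappa$-convexity of $V$ gives $\nabla^2 V \geq \kappa$; and hypothesis (iii) is the Poincar\'{e} inequality for $\mu_0 = \mu$ with constant $C_\mu$. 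Corollary \ref{C1} then yields that $\Ent_m$ is $C$-convex along the geodesic $(\mu_t)_{t\in[0,1]}$ from $\mu$ to $m$, where
\[
C := \kappa + C_\mu \min\left\{1, \frac{\kappa}{\tau}\right\}.
\]

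For (1), I would reproduce the proof of Theorem \ref{HWI} with $\kappa$ replaced by $C$. The $C$-convexity of $\Ent_m$ along $(\mu_t)$, together with $\Ent_m(m) = 0$, yields
\[
\Ent_m(\mu) \leq \frac{\Ent_m(\mu) - \Ent_m(\mu_t)}{t} - \frac{C}{2}(1-t)W_2^2(\mu, m),
\]
and the first-order expansion along the displacement interpolation driven by the optimal transport map from $\mu$ to $m$, followed by Cauchy--Schwarz as in inequalities (\ref{h})--(\ref{hhh}), bounds $\left(\Ent_m(\mu) - \Ent_m(\mu_t)\right)/t$ by $W_2(\mu, m)\sqrt{I_m(\mu)} + o(1)$. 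Taking $t\to +0$ produces (1).

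For (2), I would complete the square on the right-hand side of (1) with respect to $W_2(\mu,m)$, exactly as in the proof of Theorem \ref{LSI}:
\[
\Ent_m(\mu) \leq -\frac{C}{2}\left(W_2(\mu, m) - \frac{1}{C}\sqrt{I_m(\mu)}\right)^2 + \frac{1}{2C}I_m(\mu) \leq \frac{1}{2C}I_m(\mu),
\]
which is exactly (2). There is no real obstacle beyond matching hypotheses correctly, since $\mu = e^{-W}\Lc^n$ already has a smooth density so the approximation step from the proof of Theorem \ref{HWI} is harmless. The only conceptual point worth emphasizing is that the assumption $\bary(\mu) = \bary(m)$ is precisely what allows Corollary \ref{C1} to replace the bare $\kappa$ with $\kappa + C_\mu \min\{1,\kappa/\tau\}$, via the Poincar\'{e}-enhanced Caffarelli contraction estimate of Theorem \ref{CCT}.
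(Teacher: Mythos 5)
Your proposal is correct and follows essentially the same route as the paper, which simply invokes Corollary \ref{C1} (applied with $\mu_0=\mu$, $\mu_1=m$, $\kappa_0=\tau$, $\kappa_1=\kappa$) and then repeats the arguments of Theorems \ref{HWI} and \ref{LSI} with the enhanced convexity constant $C=\kappa+C_\mu\min\{1,\kappa/\tau\}$. Your verification that the hypotheses of Theorem \ref{MT1}/Corollary \ref{C1} are satisfied under assumptions (i)--(iii) and the $\kappa$-convexity of $V$ is exactly the matching the paper leaves to the reader.
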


\subsection{Proof of Theorem \ref{MT2}} \label{GG}

Before giving the proof, we define the notion of generalized geodesics, which was introduced in \cite{AGS}. 
\begin{Def}
Let $\mu_i \in P_2(\R^n)$ $(i = 0,1,2)$ be probability measures on $\R^n$ such that $\mu_2$ is absolutely continuous with respect to the Lebesgue measure, 
and let $T_0,T_1$ be the optimal transport maps from $\mu_2$ to $\mu_0,\mu_1$, respectively. 
Then for any $t\in [0,1]$, we define the probability measure $\mu_t$ by 
$((1-t)T_0 + tT_1)_{\#}\mu_2$, and call 
$(\mu_t)_{t\in [0,1]}$ the {\it generalized geodesic from $\mu_0$ to $\mu_1$ with the base $\mu_2$}.
\end{Def}

In particular, when $\mu_0=\mu_2$ (or $\mu_1=\mu_2$), the generalized geodesic with the base $\mu_2$ coincides with the geodesic in $(P_2(\R^n),W_2)$ in the usual sense. 

\begin{proof}[Proof of Theorem \ref{MT2}]
Let $T_0,T_1$ be the optimal transport maps from $m$ to $\mu_0,\mu_1$, respectively. Then for any $t\in [0,1]$, 
it follows from the definition of $\mu_t$ that $\mu_t = ((1-t)T_0 + tT_1)_{\#}m$. 
Now, we denote the density of $\mu_t$ with respect to $m$ by $\rho_t$. Then 
by the Monge-Amp\`{e}re equation (Theorem \ref{MA}), for any $t\in[0,1]$ and $m$-a.e. $x\in \R^n$, we have  
\[
e^{-V(x)} = \rho_t((1-t)T_0(x) + tT_1(x))e^{-V((1-t)T_0(x) + tT_1(x))} \det((1-t)\nabla T_0(x) + t\nabla T_1(x)),
\]
which implies that for any $t\in [0,1]$,  
\begin{align*}
\Ent_m (\mu_t) &= \int_{\R^n}\log\rho_t ~d\mu_t \notag \\
 &= \int_{\R^n}\log\rho_t((1-t)T_0 + tT_1) ~dm \notag \\
 &= \int_{\R^n}( - V(x) + V((1-t)T_0(x) + tT_1(x))) ~dm(x) \notag \\
 &\hspace{2cm} - \int_{\R^n}\log\det((1-t)\nabla T_0(x) + t\nabla T_1(x)) ~dm(x) \notag \\ 
 &= - \int_{\R^n}V ~dm + \int_{\R^n}V((1-t)T_0(x) + tT_1(x)) ~dm(x)  \notag \\
 &\hspace{2cm}- \int_{\R^n}\log\det((1-t)\nabla T_0(x) + t\nabla T_1(x))~dm(x).
\end{align*}
Thus by the $\kappa$-convexity of $V$, it follows that 
\begin{align}\label{bb}
(&1-t)\Ent_m(\mu_0) + t\Ent_m(\mu_1) - \Ent_m(\mu_t) \notag \\
=& (1-t)\int_{\R^n}V(T_0(x))~dm(x) + t\int_{\R^n}V(T_1(x))~dm(x) - \int_{\R^n}V((1-t)T_0(x) + tT_1(x))~dm(x) \notag \\
 &\hspace{1cm} + \int_{\R^n}\log\det((1-t)\nabla T_0(x) + t\nabla T_1(x))~dm(x)
 - (1-t)\int_{\R^n}\log\det\nabla T_0(x) ~dm(x) \notag \\
 &\hspace{2cm}- t\int_{\R^n}\log\det\nabla T_1(x) ~dm(x) \notag \\
 \geq & \frac{\kappa}{2}t(1-t)\int_{\R^n}\|T_0 - T_1\|_2^2~dm \notag \\
 &+ \int_{\R^n}\left(\log\det((1-t)\nabla T_0(x) + t\nabla T_1(x)) - (1-t)\log\det\nabla T_0(x) - t\log\det\nabla T_1(x)\right)~dm(x).
\end{align}

On the other hand, since $(T_0, T_1)_{\#}m \in \Pi(\mu_0, \mu_1)$, it follows that 
$\int_{\R^n}\|T_0 - T_1\|_2^2 ~dm=\int_{\R^n}\|x - y\|_2^2 ~d(T_0, T_1)_{\#}m(x, y) \geq W_2^2(\mu_0, \mu_1)$. 
Moreover by Lemma \ref{logdet}, the assumption (ii) and Caffarelli's contraction theorem (Theorem \ref{CCT}), 
we obtain  
\begin{align*}
&\log\det((1-t)\nabla T_0(x) + t\nabla T_1(x)) - (1-t)\log\det\nabla T_0(x) - t\log\det\nabla T_1(x) \\
&\geq \frac{1}{2}t(1-t)\min\left\{\frac{\kappa_0}{\kappa'}, \frac{\kappa_1}{\kappa'}\right\}\|\nabla T_0(x)-\nabla T_1(x)\|_{HS}^2
\end{align*}
for $m$-a.e. $x\in \R^n$. 
Therefore combining this inequality with (\ref{bb}), we have 
\begin{align}\label{bbb}
&(1-t)\Ent_m(\mu_0) + t\Ent_m(\mu_1) - \Ent_m(\mu_t) \notag \\
&\geq \frac{\kappa}{2}t(1-t)W_2^2(\mu_0, \mu_1) + \frac{1}{2}t(1-t)\min\left\{\frac{\kappa_0}{\kappa'}, \frac{\kappa_1}{\kappa'}\right\}\int_{\R^n}\|\nabla T_0(x)-\nabla T_1(x)\|_{HS}^2 ~dm(x).
\end{align}
It follows from the assumption (i) that $\int_{\R^n}(T_0 - T_1)~dm = \bary(\mu_0) - \bary(\mu_1) = 0$, which implies that by the Poincar\'{e} inequality for $\mu_0$ (Theorem \ref{PI}), 
\[
\kappa \int_{\R^n}(T_{0i}(x) - T_{1i}(x))^2 ~d\mu_0(x) \leq \int_{\R^n}\|\nabla\left(T_{0i}(x) - T_{1i}(x)\right)\|_2^2~d\mu_0(x)
\]
for any $i = 1,2, \dots, n$, where $T_{ji}$ is the $i$-th component of $T_j$ for $j=0,1$.  
Summing up in $i$,  
we have 
\begin{align*}
\int_{\R^n}\|\nabla\left(T_0(x) - T_1(x)\right)\|_{HS}^2~dm(x) &\geq \kappa\int_{\R^n}\|T_0(x) - T_1(x)\|_2^2~dm(x) \\
                                              &\geq \kappa W_2^2(\mu_0, \mu_1).
\end{align*}
Substituting this inequality to (\ref{bbb}), we  finally have 
\[
(1-t)\Ent_m(\mu_0) + t\Ent_m(\mu_1) - \Ent_m(\mu_t) 
\geq \frac{\kappa}{2}\left(1 + \min\left\{\frac{\kappa_0}{\kappa'}, \frac{\kappa_1}{\kappa'}\right\}\right)t(1-t)W_2^2(\mu_0, \mu_1),
\]
which yields the claim. 
\end{proof}

Corollary \ref{MC2} is proved in the same way as Corollary \ref{MC1}. 
Moreover, we can prove some refined functional inequalities as in Corollary \ref{C13} as well. 
\begin{Cor}\label{C21}
Let $m = e^{-V}\mathcal{L}^n$ be a probability measure on $\R^n$ such that $V \in C^{\infty}(\R^n)$ satisfies  $\kappa \leq \nabla^2 V\leq \kappa'$ for some $\kappa, \kappa'>0$, 
and $\mu=e^{-W}\Lc^n\in P_2(\R^n)$ with $\Ent_m(\mu)<\infty$,
where $W$ is a smooth function on $\R^n$. Moreover, we assume the following two conditions: 
\begin{description}
\item{(i)} $\bary(m)=\bary(\mu)$.
\item{(ii)} $\nabla^2 W \geq \tau$ holds for some $\tau>0$. 
\end{description}
Then we have 
\begin{description}
\item{(1)} (HWI inequality) 
\[
\Ent_m(\mu) \leq W_2(\mu, m)\sqrt{I_m(\mu)} - \frac{\kappa}{2}\left(1 + \min\left\{\frac{\kappa}{\kappa'},\frac{\tau}{\kappa'} \right\} \right)W_2^2(\mu,m).
\]
\item{(2)} (logarithmic Sobolev inequality)
\[
\Ent_m(\mu) \leq \frac{1}{2\kappa}\cdot\frac{1}{1 + \min\left\{\frac{\kappa}{\kappa'},\frac{\tau}{\kappa'} \right\}} I_m(\mu).
\]
\end{description}
\end{Cor}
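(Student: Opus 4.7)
The plan is to import Theorem \ref{MT2} with the specific triple $\mu_0 = \mu$, $\mu_1 = m$, and base $\mu_2 = m$, and then run the standard HWI/LSI machinery. To set up the application, I would first verify the hypotheses: since $\nabla^2 W \geq \tau$ we take $\kappa_0 = \tau$; since $\kappa \leq \nabla^2 V \leq \kappa'$ we have $\nabla^2 V \geq \kappa$ so $\kappa_1 = \kappa$; and the barycenter condition $\bary(\mu_0) = \bary(\mu_1)$ is exactly hypothesis (i). The upper bound $\nabla^2 V \leq \kappa'$ on the base is what provides the constant $\kappa'$ entering the convexity constant. Theorem \ref{MT2} then yields that $\Ent_m$ is $\kappa\bigl(1+\min\{\kappa/\kappa',\tau/\kappa'\}\bigr)$-convex along the generalized geodesic from $\mu$ to $m$ with base $m$.

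The next observation, which is the minor conceptual point, is that this generalized geodesic is nothing other than the ordinary Wasserstein geodesic from $\mu$ to $m$. Indeed, the optimal transport map from the base $m$ to itself is the identity, so with $T_0$ the optimal map from $m$ to $\mu$, the curve reads $\mu_t = ((1-t)T_0 + t\, id)_{\#} m$, which after the change of variables $y = T_0(x)$ coincides with the classical displacement interpolation $((1-t)id + tT_0^{-1})_{\#}\mu$ from $\mu$ to $m$.

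Once $\kappa\bigl(1+\min\{\kappa/\kappa',\tau/\kappa'\}\bigr)$-convexity of $\Ent_m$ along this geodesic is in hand, the proof of part (1) is a verbatim repetition of the argument of Theorem \ref{HWI}: write the convexity inequality with $\mu_0 = \mu$ and $\mu_1 = m$, divide by $t$, bound the difference quotient of the entropy by $t\int \langle \nabla\log\rho, x - T(x)\rangle\, d\mu + o(t)$ using $x\log x - y\log y \leq (\log x+1)(x-y)$, apply Cauchy--Schwarz, and let $t \to +0$. The only constant that changes relative to Theorem \ref{HWI} is that $\kappa$ is replaced by $\kappa\bigl(1+\min\{\kappa/\kappa',\tau/\kappa'\}\bigr)$, giving the claimed HWI bound. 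Part (2) then follows by completing the square in $W_2(\mu,m)$ on the right-hand side, exactly as in Theorem \ref{LSI}.

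I do not anticipate a serious obstacle: the non-trivial work is entirely carried out inside Theorem \ref{MT2}. The only technicalities are (a) checking that $m \in P_2(\R^n)$, which follows from $\nabla^2 V \geq \kappa > 0$ implying at least quadratic growth of $V$, and (b) the standard smoothing/truncation of the density $\rho = e^{V - W}$ of $\mu$ with respect to $m$ needed to justify the differentiation step in the HWI argument; this is identical to the approximation invoked in the proof of Theorem \ref{HWI} and may simply be cited.
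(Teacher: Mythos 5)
Your proposal is correct and takes essentially the same route as the paper, which (implicitly) obtains Corollary~\ref{C21} by applying Theorem~\ref{MT2} with $\mu_0=\mu$, $\mu_1=m$, base $m$, using the paper's own observation that when one endpoint equals the base the generalized geodesic reduces to the ordinary Wasserstein geodesic, and then re-running the proofs of Theorems~\ref{HWI} and~\ref{LSI} with the improved convexity constant $\kappa\bigl(1+\min\{\kappa/\kappa',\tau/\kappa'\}\bigr)$. Your identification of $\kappa_0=\tau$, $\kappa_1=\kappa$, and the change-of-variables argument showing the two parametrizations of the geodesic coincide are all in order.
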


Moreover, (2) yields the following Poincar\'{e} type inequality.
\begin{Cor}\label{C22}
Let $m = e^{-V}\mathcal{L}^n$ be a probability measure on $\R^n$ such that $V \in C^{\infty}(\R^n)$ satisfies  $\kappa \leq \nabla^2 V\leq \kappa'$ for some $\kappa, \kappa'>0$, 
and $f \in H^1(\R^n, m) \cap C^{\infty}(\R^n)$ with bounded second derivatives. 
Moreover, we assume the following two conditions: 
\begin{description}
\item{(i)} $\int_{\R^n}f ~dm =0$.
\item{(ii)} $\int_{\R^n}xf(x) ~dm(x) = 0$.
\end{description}
Then we have 
\[
\kappa\left(1+\frac{\kappa}{\kappa'}\right)\int_{\R^n} f^2 ~dm \leq \int_{\R^n} \|\nabla f\|_2^2 ~dm.
\]
In particular, when $m=\gamma_n$, then for any $f\in H^1(\R^n, \gamma_n) \cap C^{\infty}(\R^n)$ with bounded second derivatives, $\int_{\R^n}f ~d\gamma_n =0$ 
and $\int_{\R^n}\nabla f ~d\gamma_n =0$, we have 
\[
2\int_{\R^n} f^2 ~d\gamma_n \leq \int_{\R^n} \|\nabla f\|_2^2 ~d\gamma_n.
\]
\end{Cor}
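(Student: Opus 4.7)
The plan is to mimic the classical linearization argument that derives a Poincar\'e inequality from a logarithmic Sobolev inequality (exactly as in the proof of Theorem \ref{PI}), but using the refined LSI from Corollary \ref{C21}(2) in place of the classical one. The two orthogonality conditions on $f$ are designed precisely to make the perturbation $\mu := (1+\varepsilon f)m$ satisfy the two hypotheses of Corollary \ref{C21}.

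First, by an initial truncation, I would assume $f$, $\nabla f$ and $\nabla^2 f$ are bounded. Fix a small $\varepsilon>0$ and set $\rho:=1+\varepsilon f$ and $\mu:=\rho\, m$. Assumption (i) gives $\int \rho\, dm = 1$, so $\mu$ is a probability measure, while assumption (ii) gives
\[
\bary(\mu) = \int_{\R^n}x\, dm(x) + \varepsilon \int_{\R^n}x f(x)\, dm(x) = \bary(m),
\]
which is exactly hypothesis (i) of Corollary \ref{C21}. The Lebesgue density of $\mu$ is $(1+\varepsilon f)e^{-V}$, so writing $\mu = e^{-W}\mathcal{L}^n$ we have $W = V - \log(1+\varepsilon f)$ and
\[
\nabla^2 W \;=\; \nabla^2 V \;-\; \frac{\varepsilon \nabla^2 f}{1+\varepsilon f} \;+\; \frac{\varepsilon^2\, \nabla f\otimes \nabla f}{(1+\varepsilon f)^2}.
\]
Since $\nabla^2 V\geq \kappa$ and $f$ has bounded first and second derivatives, for $\varepsilon$ small enough we obtain $\nabla^2 W \geq \kappa - C\varepsilon =: \tau > 0$ for a suitable constant $C$. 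This verifies hypothesis (ii) of Corollary \ref{C21}.

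Next, applying Corollary \ref{C21}(2) to $\mu$:
\[
\int_{\R^n}\rho\log\rho\, dm \;\leq\; \frac{1}{2\kappa}\cdot\frac{1}{1 + \min\{\kappa/\kappa',\, \tau/\kappa'\}}\int_{\R^n}\frac{\|\nabla\rho\|_2^2}{\rho}\, dm.
\]
Expanding both sides at $\varepsilon=0$ exactly as in the proof of Theorem \ref{PI}, the left hand side is $\tfrac{\varepsilon^2}{2}\int f^2\, dm + o(\varepsilon^2)$ and the right hand side is $\tfrac{\varepsilon^2}{2\kappa(1+\min\{\kappa/\kappa',\tau/\kappa'\})}\int \|\nabla f\|_2^2\, dm + o(\varepsilon^2)$. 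Since $\tau = \kappa - C\varepsilon \leq \kappa$, the minimum equals $\tau/\kappa'$ and tends to $\kappa/\kappa'$ as $\varepsilon\to 0$. Dividing by $\varepsilon^2$ and letting $\varepsilon\to+0$ yields the stated inequality. For the Gaussian case, one has $\kappa=\kappa'=1$, which gives the factor $2$, and Gaussian integration by parts $\int_{\R^n}\partial_i f\, d\gamma_n = \int_{\R^n} x_i f\, d\gamma_n$ shows that $\int \nabla f\, d\gamma_n = 0$ is equivalent to condition (ii).

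The main obstacle is the verification that $\mu$ actually satisfies the two hypotheses of Corollary \ref{C21}, and in particular that $\nabla^2 W\geq\tau>0$ holds with $\tau$ tending to $\kappa$ as $\varepsilon\to 0$; this is where the boundedness of $\nabla^2 f$ is essential, and it is also the reason the assumption $\bary(m)=\bary(\mu)$ from Corollary \ref{C21} translates into the condition $\int xf\, dm = 0$ rather than the more familiar $\int f\, dm = 0$ alone. Once this is in place, the rest is the standard expansion argument already used in Theorem \ref{PI}.
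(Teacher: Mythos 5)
Your proposal is correct and follows essentially the same argument as the paper: set $\rho=1+\varepsilon f$, $\mu=\rho m$, use hypothesis (ii) to verify $\bary(\mu)=\bary(m)$, use boundedness of $\nabla^2 f$ to get $\nabla^2 W\geq\kappa-O(\varepsilon)$ so that Corollary \ref{C21}(2) applies, and then carry out the same Taylor expansion as in Theorem \ref{PI}. The only cosmetic difference is that you couple the lower Hessian bound $\tau$ directly to $\varepsilon$ and take a single limit, whereas the paper introduces an auxiliary parameter $\delta$ independent of $\varepsilon$ and takes the limits $\varepsilon\to 0$ and then $\delta\to 0$ separately; both are valid.
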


\begin{proof}
The proof of this claim is the same as Theorem \ref{PI} with a little modification. Let $\varepsilon>0$ be small enough (with respect to $\delta$ later), and set $\rho:=1+\varepsilon f$ and $\mu:=\rho m$. 
Note that $\mu\in P_2(\R^n)$. 
By the assumption (ii), we have $\bary(\mu)=\int_{\R^n}x(1+\varepsilon f) ~dm = \bary(m)$. 
Take $\delta>0$. 
Since $f$ has bounded second derivatives, setting $W:=V - \log\rho$, we may assume that $\nabla^2 W\geq \kappa - \delta$. 
Since $\mu=e^{-W}\Lc^n$, the same argument as Theorem \ref{PI} with Corollary \ref{C21}(2) yields that 
\[
\kappa\left(1+\frac{\kappa-\delta}{\kappa'}\right)\int_{\R^n}f^2 ~dm \leq \int_{\R^n} \|\nabla f\|_2^2 ~dm.
\]
Since $\delta>0$ was arbitrary, letting $\delta\to+0$, we obtain the first assertion. 

The second claim follows from $\int_{\R^n}xf(x) ~d\gamma_n(x)=\int_{\R^n}\nabla f ~d\gamma_n$ by the integration by parts, 
$\kappa=\kappa'=1$ and the first claim. 
\end{proof}

\begin{Rem}
In Corollary \ref{C22}, two conditions of $f$ mean that $f$ is perpendicular to all constants and linear functions in $H^2(\R^n,m)$. 
Thus, Corollary \ref{C22} is related to the second eigenvalue problem (see \cite{CFM}). 
\end{Rem}
\subsection{An alternative proof and an improvement of Theorem \ref{STI}}

In this subsection, we give a new proof of Theorem $\ref{STI}$ and its extension on the real line. 
In order to prove Theorem $\ref{STI}$, Fathi used a fact on moment measures (see \cite{CK}, \cite{S} for details) which follows from deep optimal transport theory, and a 
reverse logarithmic Sobolev inequality (see \cite{CFGLSW}) which follows from convex geometry (more exactly, the Blaschke-Santal\'{o} inequality). 
In the present paper, we only use tools from optimal transport theory, and as its applications, we give generalizations of well-known facts in convex geometry 
(including the Blaschke-Santal\'{o} inequality) in the next section.

Firstly, we give the following lemma which plays an important role in proving the subsequent result. 
\begin{Lem}\label{STI1}
Let $m = e^{-V}\mathcal{L}^1$ be a probability measure on $\R$ such that $V \in C^{\infty}(\R)$ is $\kappa$-convex for some $\kappa>0$, and 
$f,g\in H^1(\R,m)$ be strictly monotone increasing functions. 
\begin{description}
\item{(1)} If $f$ is odd, and $V$ is even, then it holds that 
\begin{align*}
&-\int_{\R}\log (f'g') ~dm + \kappa\int_{\R} fg ~dm + \int_{\R}(f(x) + g(x))(V' (x) - \kappa x) ~dm(x)\notag\\
&\hspace{8cm}  -2 \int_{\R} xV'(x) ~dm(x) +\kappa\int_{\R} x^2 ~dm(x) \geq 0.
\end{align*}
\item{(2)} If $f$ satisfies $\int_{\R}f ~d\gamma_1 =0$, then letting $m = \gamma_1$, we have 
\[
-\int_{\R}\log(f'g')~d\gamma_1 + \int_{\R}fg ~d\gamma_1 - 1\geq 0.
\]
Moreover, the equality holds in (2) if and only if there exist some $a>0$ and $b\in \R$ such that $f(x) = ax$ and $g(x) = x/a + b$ hold on $\R$. 
\end{description}
\end{Lem}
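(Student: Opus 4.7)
The plan is to reduce the inequality to a manifestly nonnegative expression using integration by parts and the odd/even symmetry, and then deduce (2) as a specialization.

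First, I exploit the basic identity $\int h V' \, dm = \int h' \, dm$ for any $h \in H^1(\R, m)$ with suitable decay (a consequence of $V' e^{-V} = -(e^{-V})'$ and integration by parts). In particular $\int x V' \, dm = 1$, $\int f V' \, dm = \int f' \, dm$, and $\int g V' \, dm = \int g' \, dm$. Substituting these into the left-hand side $L$ of the inequality (1) and collecting terms yields the cleaner representation
\[
L = \int \bigl(f' - 1 - \log f'\bigr)\, dm + \int \bigl(g' - 1 - \log g'\bigr)\, dm + \kappa \int (f - x)(g - x) \, dm.
\]
The first two integrands are pointwise nonnegative by the elementary inequality $t - 1 \geq \log t$ for $t > 0$; the third term is bilinear and can have either sign.

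Second, I would use the symmetry. Since $V$ is even, $m$ is symmetric, and combined with $f$ odd this gives $\int f\, dm = 0$. A direct check shows $L[f, g+c] = L[f, g]$ for any $c \in \R$ (because $\int f\, dm = \int (V' - \kappa x)\, dm = 0$), so after replacing $g$ by $g - \int g\, dm$ we may assume $\int g\, dm = 0$ as well. Then both $f - x$ and $g - x$ have zero $m$-mean, so the Poincar\'e inequality for $m$ with constant $\kappa$ (Theorem \ref{PI}) yields $\kappa \int (f - x)^2 \, dm \leq \int (f' - 1)^2 \, dm$ and likewise for $g$. Pairing these with Cauchy--Schwarz controls the negative contribution of the bilinear term.

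The main obstacle is the mismatch between the pointwise bound on the first two terms and the integrated Poincar\'e bound on the third: because $t - 1 - \log t \geq \tfrac12 (t-1)^2$ fails globally on $(0,\infty)$, a crude combination is insufficient. I expect the right fix to come from exploiting the oddness of $f$ more carefully, so that $f'$ is even and only the odd parts of $f - x$ and $g - x$ contribute to $\int (f-x)(g-x)\,dm$; this pairs better with a Poincar\'e--type estimate restricted to odd functions, which still holds with constant $\kappa$ and can be matched against $\int(f'-1-\log f')\,dm$ after splitting $f'$ around its mean. Alternatively, one can reinterpret $\int (f'-1-\log f')\,dm$ as a Bregman divergence and use the logarithmic Sobolev inequality (Theorem \ref{LSI}) for $m$ together with an HWI-style argument.

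Part (2) follows from (1) by specializing to $V(x) = x^2/2 + \tfrac12 \log(2\pi)$, for which $V'(x) = x$ and $\kappa = 1$ give $V' - \kappa x \equiv 0$ and $\int x V'\, d\gamma_1 = 1 = \kappa \int x^2\, d\gamma_1$; the hypothesis $\int f\, d\gamma_1 = 0$ replaces the oddness of $f$ in exactly the steps where $\int f\, dm = 0$ was used (notably the translation invariance in $g$). The equality case is pinned down by tracing tightness through the argument: Cauchy--Schwarz forces $f - x$ and $g - x$ to be proportional, Gaussian Poincar\'e is tight only on affine functions, and together these compel $f(x) = ax$ and $g(x) = x/a + b$ for some $a > 0$ and $b \in \R$.
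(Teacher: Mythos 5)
Your rewrite of the left-hand side is correct: after integrating $\int hV'\,dm=\int h'\,dm$ by parts one indeed gets
\[
L=\int(f'-1-\log f')\,dm+\int(g'-1-\log g')\,dm+\kappa\int(f-x)(g-x)\,dm,
\]
and the first two integrands are nonnegative. But the strategy you sketch for the bilinear term does not close. Poincar\'e plus Cauchy--Schwarz gives at best
\[
\kappa\left|\int(f-x)(g-x)\,dm\right|\le\left(\int(f'-1)^2\,dm\right)^{1/2}\left(\int(g'-1)^2\,dm\right)^{1/2},
\]
and to absorb this into the first two terms you would need something like $\tfrac12(t-1)^2\le t-1-\log t$. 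That inequality fails for every $t>1$; concretely, with $f(x)=2x$ (odd, strictly increasing) one has $\tfrac12\int(f'-1)^2\,dm=\tfrac12$ while $\int(f'-1-\log f')\,dm=1-\log 2\approx 0.31$. You correctly identify this mismatch as the obstacle, but neither of the two fixes you suggest (restricting Poincar\'e to odd functions, or an HWI/Bregman reinterpretation) produces a workable bound; the odd-function Poincar\'e constant is still $\kappa$, and the $t-1-\log t$ versus $(t-1)^2$ issue is unchanged.

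The paper takes a genuinely different route, and it is worth seeing why it dodges exactly this difficulty. Set $\alpha=g(0)$; since $f$ is odd and strictly increasing, $f$ vanishes only at $0$, so $f(g-\alpha)\ge 0$. Define $h\ge 0$ by $h^2=f(g-\alpha)$. Differentiating and applying AM--GM to $2hh'=f'(g-\alpha)+fg'$ gives $(h')^2\ge f'g'$, so the two logarithmic terms collapse into a single $-\int\log(h')^2\,dm$, and the bilinear term $\kappa\int f(g-\alpha)\,dm=\kappa\int h^2\,dm$ becomes a genuine square. After splitting the line at $0$ (using that $V'(x)-\kappa x$ has a definite sign on each half-line when $V$ is even) and integrating by parts once more, $L$ becomes a sum of terms of the form $\pm h'-1-\log(\pm h')$ and $(h(x)\pm x)^2$, each manifestly nonnegative, so no Poincar\'e--Cauchy--Schwarz trade-off is needed at all. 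This ``take the geometric mean of $f$ and $g-\alpha$'' step is the missing idea. Note also that part (2) is not a literal specialization of part (1): in (2) $f$ need not be odd, so the splitting point $0$ is replaced by $\xi:=\sup\{x:f(x)\le 0\}$, and the argument has to be rerun with $\alpha=g(\xi)$; the hypothesis $\int f\,d\gamma_1=0$ plays the role of guaranteeing such a $\xi$ exists, not merely of replacing $\int f\,dm=0$ in a translation step.
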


\begin{proof}
(1) Note that since $f$ is an odd and strictly monotone increasing function, $f(x)=0$ is equivalent to $x=0$. 
Set $\alpha := g(0)$. Since $f$ is odd, and $V$ is even, 
we have $\int_{\R}f ~dm = 0$ and $\int_{\R}(V'(x) - \kappa x) ~dm(x) = 0$. Hence, we obtain 
\begin{align}\label{d}
&-\int_{\R}\log (f'g') ~dm + \kappa\int_{\R} fg ~dm + \int_{\R}(f(x) + g(x))(V' (x) - \kappa x) ~dm(x)\notag \\
&\hspace{1cm}  -2 \int_{\R} xV'(x) ~dm(x) +\kappa\int_{\R} x^2 ~dm(x)  \notag \\
&= -\int_{\R}\log (f'g') ~dm + \kappa\int_{\R} f(g - \alpha) ~dm + \int_{\R}(f(x) + g(x) - \alpha)(V' (x) - \kappa x) ~dm(x) \notag \\
&\hspace{1cm}  -2 \int_{\R} xV'(x) ~dm(x) +\kappa\int_{\R} x^2 ~dm(x) .
\end{align}
Since $f,g$ are strictly monotone increasing functions, and $f$ is odd, it follows from the definition of $\alpha$ that $f(g-\alpha)$ is nonnegative on $\R$. 
Thus there exists the nonnegative function $h$ on $\R$ satisfying $h^2 = f(g-\alpha)$. 
Then it holds that $2hh' = f'(g-\alpha) + fg'$ on $\R$. 
This equality and the arithmetic-geometric mean inequality yield that 
\begin{align*}
(h')^2 &= \frac{(f'(g-\alpha))^2 + (fg')^2 + 2f(g-\alpha)f'g'}{4h^2} \\
       &\geq \frac{f(g-\alpha)f'g'}{h^2},
\end{align*} 
which implies that $(h')^2 \geq f'g'$ by the definition of $h$. 
Applying this inequality and the arithmetic-geometric mean inequality again, we obtain an estimate of $(\ref{d})$ from below  such that 
\begin{align*}
&-\int_{\R}\log (f'g') ~dm + \kappa\int_{\R} f(g - \alpha) ~dm  + \int_{\R}(f(x) + g(x) - \alpha)(V' (x) - \kappa x) ~dm(x) \notag \\
&\hspace{1cm}  -2 \int_{\R} xV'(x) ~dm(x) +\kappa\int_{\R} x^2 ~dm(x) \\
&\geq -\int_{\R}\log (h')^2 ~dm + \kappa\int_{\R} h^2 ~dm + 2\int_{\R}|h(x)(V' (x) - \kappa x)| ~dm(x) -2 +\kappa\int_{\R} x^2 ~dm(x) \\~
&= -2\int_{-\infty}^0\log (-h')~dm + \kappa\int_{-\infty}^0h^2 ~dm - 2\int_{-\infty}^0h(x)(V'(x) - \kappa x)~dm(x) + \kappa\int_{-\infty}^0x^2~dm(x)\\
&\hspace{1cm} -2\int_0^{\infty}\log h' ~dm + \kappa\int_0^{\infty}h^2 ~dm + 2\int_0^{\infty}h(x)(V'(x) - \kappa x) ~dm(x) + \kappa\int_0^{\infty}x^2~dm(x) -2 \\
&= 2\int_{-\infty}^0(-h' -1 - \log (-h'))~dm + \kappa\int_{-\infty}^0(h(x) +  x)^2 ~dm(x) \\
&\hspace{3cm} +2\int_0^{\infty}(h' -1 -\log (h')) ~dm + \kappa\int_0^{\infty}(h(x)- x)^2~dm(x). 
\end{align*}
Here, in the first inequality, we used the integration by parts for the fourth term, which yields $\int_{\R}xV'(x) ~dm(x)$ $= 1$. 
The first equality also follows from $V'(x) \leq \kappa x$ on $(-\infty,0]$ and $V'(x) \geq \kappa x$ on $[0,\infty)$, and 
the second equality follows from the integration by parts which yields that $\int_{-\infty}^0h' ~dm = \int_{-\infty}^0hV' ~dm$ and $\int_0^{\infty}h' ~dm = \int_0^{\infty}hV' ~dm$. 
Since the function $x-1-\log x$ on $(0,\infty)$ is nonnegative, it yields the claim. 


(2) The method of the proof is the same as (1), but we need a trick since we do not have $f(0)=0$. 
Set $\xi := \sup\{x\in \R~|~f(x)\leq 0\}$ and $\alpha :=g(\xi)$. 
Note that $\xi$ and $\alpha$ are well-defined since $\int_{\R}f ~d\gamma_1 = 0$. 
By the strict monotonicity of $f,g$ and the definition of $\alpha$, 
it holds that $f(g-\alpha)$ is nonnegative. Let $h$ be the nonnegative function on $\R$ satisfying $h^2 = f(g - \alpha)$. 
Then, as in $(1)$, we have $f'g' \leq (h')^2$, which yields that by $\int_{\R}f ~d\gamma_1 = 0$, 
\begin{align*}
&-\int_{\R}\log(f'g') ~d\gamma_1 + \int_{\R}fg ~d\gamma_1 - 1 \\
&= -\int_{\R}\log(f'g')~d\gamma_1 + \int_{\R}f(g-\alpha) ~d\gamma_1 - 1 \\
&\geq -\int_{\R}\log (h')^2~d\gamma_1 + \int_{\R}h^2~d\gamma_1 - 1 \\
&= 2\int_{-\infty}^{\xi}(-h' -1 -\log(-h'))~d\gamma_1 +2\int_{\xi}^{\infty}(h' -1 -\log h') ~d\gamma_1 + 2\int_{-\infty}^{\xi}h' ~d\gamma_1 \\
&\hspace{2cm} -2 \int_{\xi}^{\infty}h' ~d\gamma_1  + \int_{\R}h^2d\gamma_1 +1 \\ 
&= 2\int_{-\infty}^{\xi}(-h' -1 -\log(-h'))~d\gamma_1 +2\int_{\xi}^{\infty}(h' -1 -\log h')~d\gamma_1 + \int_{-\infty}^{\xi}(h(x)+x)^2~d\gamma_1(x) \\
&\hspace{2cm} + \int_{\xi}^{\infty}(h(x) - x)^2~d\gamma_1(x).
\end{align*}
Here, the last equality follows from $\int_{\R}x^2 ~d\gamma_1(x) =1$ and the integration by parts which yields that $\int_{-\infty}^{\xi}h' ~d\gamma_1 =\int_{-\infty}^{\xi}xh(x) ~d\gamma_1(x)$ and 
$\int_{\xi}^{\infty}h' ~d\gamma_1 =\int_{\xi}^{\infty}xh(x) ~d\gamma_1(x)$. 
Since the function $x-1-\log x$ on $(0,\infty)$ is nonnegative, we have the desired inequality. 

The equality holds if and only if we have $f'(g-\alpha) = fg'$ and $h(x)^2 = x^2$ on $\R$, which imply that $\xi=0$ and 
there exists some constant $a>0$ such that $f(x) = ax$ and $g(x) = x/a +\alpha$ hold on $\R$. 
\end{proof}

\begin{Rem}\label{rSTI1}
Lemma \ref{STI1}(1) is proved under the condition that $V$ is even, but we only used the facts that 
$\int_{\R}(V'(x) - \kappa x) ~dm(x)=0$, $V'(x) \leq \kappa x$ on $(-\infty,0]$ and $V'(x) \geq \kappa x$ on $[0,\infty)$. 
Note that the first equation is equivalent to $\bary(m) =0$ by the integration by parts. 
In addition, although we assume that $f$ is odd, it suffices to prove for $f$ satisfying $\int_{\R}f ~dm =0$ and $f(0)=0$. 
Hence, we can prove Lemma \ref{STI1}(1) under weaker conditions (see Lemma \ref{STI1'} for a more general discussion in detail).
\end{Rem}

\begin{proof}[Proof of Theorem \ref{STI} on the real line]
(1) Let $T_1$ and $T_2$ be the optimal transport maps from $m$ to $\mu$ and $\nu$, respectively. 
Then by the Monge-Amp\`{e}re equation (Theorem \ref{MA}) and the $\kappa$-convexity of $V$, we have 
\begin{align*}
\Ent_m(\mu) &= - \int_{\R} \log T_1' ~dm + \int_{\R}(V(T_1(x)) -V(x)) ~dm(x) \\
 & \geq  - \int_{\R} \log T_1' ~dm + \int_{\R}V' (x)(T_1(x) - x) ~dm(x) + \frac{\kappa}{2}\int_{\R} (T_1(x) -x)^2 ~dm(x).
\end{align*}
Similarly, it holds that 
\[
\Ent_m(\nu) \geq - \int_{\R} \log T_2' ~dm + \int_{\R}V' (x)(T_2(x) - x) ~dm(x) +  \kappa\int_{\R} (T_2(x) -x)^2/2 ~dm(x).
\]
Therefore, using $W_2^2(\mu,\nu) = \int_{\R}(T_1 - T_2)^2 ~dm$ by Theorem \ref{1dim}, we obtain 
\begin{align}\label{o}
&\Ent_m(\mu) + \Ent_m(\nu) - \frac{\kappa}{2}W_2^2(\mu,\nu) \notag \\
&\geq -\int_{\R}\log (T_1'T_2') ~dm + \int_{\R}V' (x)(T_1(x) - x) ~dm(x) + \int_{\R}V' (x)(T_2(x) - x) ~dm(x) \notag\\
&\hspace{1.5cm}+\kappa\int_{\R}T_1T_2 ~dm- \kappa\int_{\R}T_1(x)x ~dm(x) - \kappa\int_{\R}T_2(x)x ~dm(x) +\kappa\int_{\R} x^2 ~dm(x) \notag\\
&=  -\int_{\R}\log (T_1'T_2') ~dm + \kappa\int_{\R} T_1T_2 ~dm + \int_{\R}(T_1(x) + T_2(x))(V' (x) - \kappa x) ~dm(x) \notag\\
&\hspace{1.5cm} -2 \int_{\R} xV'(x) ~dm(x) +\kappa\int_{\R} x^2 ~dm(x) .
\end{align}
Since $m$ and $\nu$ are symmetric, $T_2$ is odd. Thus, it follows from Theorem \ref{1dim} and Lemma \ref{STI1}(1) that 
the right hand side above is nonnegative, which yields the claim. 

(2) Set $m=\gamma_1$, and let $T_1$ and $T_2$ be the same as in the proof of (1). Then by the same calculations as in the proof of (1), we obtain 
\begin{align*}
\Ent_{\gamma_1}(\mu) + \Ent_{\gamma_1}(\nu) - \frac{1}{2}W_2^2(\mu,\nu) 
= -\int_{\R}\log (T_1'T_2') ~d\gamma_1 + \int_{\R}T_1T_2 ~d\gamma_1 -1.
\end{align*}
Since the barycenter of $\nu$ is the origin, $T_2$ satisfies $\int_{\R}T_2 ~dm = 0$. 
Hence, it follows from Theorem \ref{1dim} and Lemma \ref{STI1}(2) that the right hand side above is nonnegative, which yields the claim. 

The equality holds if and only if, by Lemma \ref{STI1}(2), there exist some constants $a>0$ and $b\in\R$ satisfying $T_1(x)=ax + b$ and $T_2(x)=x/a$ on $\R$, 
which imply the desired result since $\mu={T_1}_{\#}\gamma_1$ and $\nu={T_2}_{\#}\gamma_1$. 
\end{proof}

In the end of this subsection, we describe that Theorem \ref{STI}(1) on the real line can be refined by modifying  the above proof. 
As noted in Remark \ref{rSTI1}, we can prove Lemma \ref{STI1}(1) under weaker conditions for the probability measure $m$ and the function $f$. 
Precisely, we obtain the following. 
\begin{Lem}\label{STI1'}
Let $m = e^{-V}\mathcal{L}^1$ be a probability measure on $\R$ such that $V \in C^{\infty}(\R)$ is $\kappa$-convex for some $\kappa>0$, and set $\xi \in \argmin \{V(x) ~|~ x\in \R\}$.
Let $f,g\in H^1(\R,m)$ be strictly monotone increasing functions, and set $a:=f(\xi)$ and $b:=g(\xi)$. 
Then it holds that 
\begin{align}\label{l}
&-\int_{\R}\log (f'g') ~dm + \kappa\int_{\R} fg ~dm + \int_{\R}(f(x) + g(x))(V'(x) - \kappa x+ \kappa \xi) ~dm(x)\notag\\
&\hspace{2cm} -2 \int_{\R} xV'(x) ~dm(x) +\kappa\int_{\R} (x - \xi)^2 ~dm(x) \notag\\
&\geq \kappa\left( b\int_{\R}f ~dm + a\int_{\R}g ~dm - ab - (a+b)(\bary(m)-\xi) \right).
\end{align}
\end{Lem}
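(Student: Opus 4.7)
The plan is to follow the proof of Lemma \ref{STI1}(1), replacing the role of the origin and the vanishing $f(0)=0$ by the minimizer $\xi$ of $V$ together with the constants $a=f(\xi)$ and $b=g(\xi)$.

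First, I record some observations. Since $V'(\xi)=0$ and $V$ is $\kappa$-convex, $V'(x)-\kappa(x-\xi)\geq 0$ on $[\xi,\infty)$ and $\leq 0$ on $(-\infty,\xi]$. Integration by parts gives $\int V'\,dm=0$ and $\int xV'(x)\,dm(x)=1$, whence $\int (V'(x)-\kappa x+\kappa\xi)\,dm(x)=-\kappa(\bary(m)-\xi)$. Because $f$ and $g$ are strictly increasing, both $f-a$ and $g-b$ have the same sign (nonnegative on $[\xi,\infty)$, nonpositive on $(-\infty,\xi]$), so $h:=\sqrt{(f-a)(g-b)}\geq 0$ is well-defined with $h(\xi)=0$, decreasing on $(-\infty,\xi]$ and increasing on $[\xi,\infty)$. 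Differentiating $h^2$ and applying AM--GM exactly as in the original proof yields $(h')^2\geq f'g'$, while $|f-a|+|g-b|\geq 2h$ combined with the sign analysis gives $((f-a)+(g-b))(V'-\kappa x+\kappa\xi)\geq 2h\,|V'-\kappa x+\kappa\xi|$ pointwise on $\R$.

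Next I rewrite the LHS of \eqref{l} algebraically. Using the identity $\int fg\,dm=\int(f-a)(g-b)\,dm+b\int f\,dm+a\int g\,dm-ab$ and the decomposition $f+g=(f-a)+(g-b)+(a+b)$ combined with $\int(V'-\kappa x+\kappa\xi)\,dm=-\kappa(\bary(m)-\xi)$, the inequality \eqref{l} is equivalent to
\begin{align*}
&-\int\log(f'g')\,dm+\kappa\int(f-a)(g-b)\,dm+\int((f-a)+(g-b))(V'-\kappa x+\kappa\xi)\,dm \\
&\qquad -2\int xV'\,dm+\kappa\int(x-\xi)^2\,dm\geq 0.
\end{align*}
Bounding this from below by the estimates of the previous paragraph reduces the task to showing nonnegativity of
\begin{align*}
-\int\log(h')^2\,dm+\kappa\int h^2\,dm+2\int h\,|V'-\kappa x+\kappa\xi|\,dm-2\int xV'\,dm+\kappa\int(x-\xi)^2\,dm.
\end{align*}

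Finally I split the domain at $\xi$. On $(-\infty,\xi]$ one has $h'\leq 0$ and $V'-\kappa x+\kappa\xi\leq 0$; on $[\xi,\infty)$ both quantities are nonnegative. Integration by parts, using the boundary value $h(\xi)=0$, gives $\int_{-\infty}^\xi hV'\,dm=\int_{-\infty}^\xi h'\,dm$ and $\int_\xi^\infty hV'\,dm=\int_\xi^\infty h'\,dm$, while $-2\int xV'\,dm=-2=-2m((-\infty,\xi])-2m([\xi,\infty))$. Completing the square shifted by $x-\xi$ then rewrites the expression as
\begin{align*}
&2\int_{-\infty}^\xi [-h'-1-\log(-h')]\,dm+\kappa\int_{-\infty}^\xi (h(x)+x-\xi)^2\,dm \\
&\qquad +2\int_\xi^\infty [h'-1-\log h']\,dm+\kappa\int_\xi^\infty (h(x)-(x-\xi))^2\,dm,
\end{align*}
which is nonnegative since $t-1-\log t\geq 0$ for $t>0$. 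The main obstacle is the algebraic bookkeeping in the reduction step: one must carefully track how the extra terms on the right-hand side of \eqref{l} arise from expanding $(f-a)(g-b)$ and $f+g=(f-a)+(g-b)+(a+b)$, and then verify that the integration by parts and square-completion identities used in Lemma \ref{STI1}(1) survive intact once the origin is replaced by the minimizer $\xi$.
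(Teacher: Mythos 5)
Your proof is correct and follows essentially the same route as the paper's: subtract the right-hand side of \eqref{l} from the left, obtaining the analogue of the expression in Lemma \ref{STI1}(1) with $f,g$ replaced by $f-a,g-b$, and then run the same $h=\sqrt{(f-a)(g-b)}$ argument (AM--GM, integration by parts splitting at $\xi$, completing the square). The paper states the final step tersely as ``estimate from below as in Lemma \ref{STI1}(1),'' whereas you carry it out explicitly, which is a useful expansion rather than a different approach.
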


\begin{proof}
For simplicity, set $f_a:=f-a$ and $g_b:=g-b$. 
Then, subtracting the right hand side of (\ref{l}) from the left one, we have 
\begin{align}\label{lll}
&-\int_{\R}\log (f'g') ~dm + \kappa\int_{\R} fg ~dm + \int_{\R}(f(x) + g(x))(V'(x) - \kappa x+ \kappa \xi) ~dm(x) -2 \int_{\R} xV'(x) ~dm(x)\notag\\
&\hspace{1cm}  +\kappa\int_{\R} (x - \xi)^2 ~dm(x)  - \kappa\left(b\int_{\R}f ~dm + a\int_{\R}g ~dm -ab -(a+b)(\bary(m)-\xi)\right) \notag\\
&=-\int_{\R}\log (f_a'g_b') ~dm + \kappa\int_{\R} f_ag_b ~dm + \int_{\R}(f_a(x) + g_b(x))(V'(x) - \kappa x+ \kappa \xi) ~dm(x) \notag\\
&\hspace{1cm}  -2 \int_{\R} xV'(x) ~dm(x) +\kappa\int_{\R} (x - \xi)^2 ~dm(x),
\end{align}
where $\int_{\R}V' ~dm =0$ is used. 
It follows from the definitions of $a$ and $b$ that $f_ag_b\geq 0$. 
In addition, the definition of $\xi$ yields that $V'(x)\leq \kappa x -\kappa \xi$ on $(-\infty,\xi)$ and $V'(x)\geq \kappa x -\kappa \xi$ on $(\xi,\infty)$. 
Hence, the desired inequality is given by estimating (\ref{lll}) from below as in Lemma \ref{STI1}(1).
\end{proof}

When $V$ is even, and $f$ is odd, we obtain $\xi=\bary(m)=\int_{\R} f ~dm = a =0$, which imply that Lemma \ref{STI1'} includes Lemma \ref{STI1}(1) on the real line. 

Using Lemma \ref{STI1'} instead of Lemma \ref{STI1}, 
we can extend the symmetrized Talagrand inequality on the real line. 
In order to describe it, we give some notations. 
Let $m$ be a probability measure on $\R$. When the density of $m$ with respect to the Lebesgue measure has a  unique point attaining its maximum, 
let us denote by $\xi_m$ that point. Moreover, when $\mu$ is a probability measure on $\R$, and $T$ is the optimal transport map from $m$ to $\mu$, set $\alpha_{m,\mu}:=T(\xi_m)$.
\begin{Thm}\label{rSTI1'}
Let $\mu,\nu\in P_2(\R)$, and 
$m = e^{-V}\mathcal{L}^1$ be a probability measure on $\R$ such that $V \in C^{\infty}(\R)$ is $\kappa$-convex for some $\kappa>0$.
Then we have 
\[
\frac{1}{2}W_2^2(\mu, \nu)\leq \frac{1}{\kappa}(\Ent_{m}(\mu)+\Ent_{m}(\nu)) + \Phi(m,\mu,\nu).
\]
Here, the last term of the right hand side above is 
\begin{align*}
\Phi(m,\mu,\nu) &:=-\alpha_{m,\nu}\bary(\mu) - \alpha_{m,\mu}\bary(\nu) + \alpha_{m,\mu}\alpha_{m,\nu} + (\alpha_{m,\mu}+\alpha_{m,\nu})(\bary(m)-\xi_m) \\
&\hspace{1cm}+ \left(\xi_m  - \bary(m) + \frac{\bary(\mu) + \bary(\nu)}{2}\right)^2 - \left(\bary(m) - \frac{\bary(\mu) + \bary(\nu)}{2}\right)^2.
\end{align*}
In particular, when $m=\gamma_1$, setting 
\[\Phi(\mu,\nu):=\Phi(\gamma_1,\mu,\nu)=-\alpha_{\gamma_1,\nu}\bary(\mu) - \alpha_{\gamma_1,\mu}\bary(\nu) + \alpha_{\gamma_1,\mu}\alpha_{\gamma_1,\nu},
\] 
we have 
\[
\frac{1}{2}W_2^2(\mu, \nu)\leq \Ent_{\gamma_1}(\mu)+\Ent_{\gamma_1}(\nu) + \min\{\Phi(\mu,\nu),-\bary(\mu)\bary(\nu)\}.
\]
\end{Thm}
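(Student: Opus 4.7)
The plan is to follow the strategy of the alternative proof of Theorem \ref{STI} on the real line, but substitute the stronger Lemma \ref{STI1'} for Lemma \ref{STI1} at the final step. First, let $T_1, T_2$ be the optimal transport maps from $m$ to $\mu$ and $\nu$ respectively; Theorem \ref{1dim} guarantees these are strictly monotone increasing locally Lipschitz maps with $W_2^2(\mu,\nu) = \int_{\R}(T_1-T_2)^2\,dm$. Exactly as in the proof of Theorem \ref{STI}(1), I apply the Monge-Amp\`ere equation (Theorem \ref{MA}) to express $\Ent_m(\mu)$ and $\Ent_m(\nu)$ through $T_i'$ and $V$, and I use the $\kappa$-convexity of $V$ to bound $V(T_i(x))-V(x)$ from below. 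This yields the same integral lower bound as in equation (\ref{o}):
\begin{align*}
\Ent_m(\mu)+\Ent_m(\nu)-\tfrac{\kappa}{2}W_2^2(\mu,\nu) &\geq -\int_{\R}\log(T_1'T_2')\,dm + \kappa\int_{\R}T_1T_2\,dm \\
 &\quad + \int_{\R}(T_1+T_2)(V'(x)-\kappa x)\,dm(x) - 2\int_{\R}xV'(x)\,dm(x) + \kappa\int_{\R}x^2\,dm(x).
\end{align*}

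Next I apply Lemma \ref{STI1'} with $f=T_1$, $g=T_2$, and the auxiliary point $\xi=\xi_m\in\argmin V$. Since the optimal transport map from $m$ to $\mu$ is strictly monotone, the constants $a=f(\xi_m)=T_1(\xi_m)$ and $b=g(\xi_m)=T_2(\xi_m)$ coincide with $\alpha_{m,\mu}$ and $\alpha_{m,\nu}$ in the statement, while $\int_{\R}T_i\,dm=\bary(\mu)$ or $\bary(\nu)$. A short direct calculation shows that the left-hand side of the inequality in Lemma \ref{STI1'} differs from the integral expression in Step~1 by exactly the fixed scalar
\[
\kappa\xi_m(\bary(\mu)+\bary(\nu)) - 2\kappa\xi_m\bary(m) + \kappa\xi_m^2,
\]
obtained by expanding the $\kappa\xi$ correction in $(V'-\kappa x+\kappa\xi)$ and $(x-\xi)^2-x^2$.

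Subtracting this scalar from both sides of the inequality in Lemma \ref{STI1'}, dividing by $\kappa$, and rearranging, I obtain
\[
\tfrac{1}{2}W_2^2(\mu,\nu) \leq \tfrac{1}{\kappa}(\Ent_m(\mu)+\Ent_m(\nu)) + \widetilde\Phi(m,\mu,\nu),
\]
where $\widetilde\Phi$ is the residual scalar. To match the formula for $\Phi(m,\mu,\nu)$ as stated, I rewrite the leftover piece
\[
\xi_m^2 - 2\xi_m\bary(m) + \xi_m(\bary(\mu)+\bary(\nu))
\]
as the telescoping difference $(\xi_m-\bary(m)+s)^2 - (\bary(m)-s)^2$ with $s:=(\bary(\mu)+\bary(\nu))/2$; factoring the difference of squares makes this identification immediate. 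This proves $\widetilde\Phi=\Phi(m,\mu,\nu)$.

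Finally, the case $m=\gamma_1$ uses $\bary(\gamma_1)=\xi_{\gamma_1}=0$, which collapses the square term and gives the stated $\Phi(\mu,\nu)$. The minimum with $-\bary(\mu)\bary(\nu)$ comes for free from Corollary \ref{SSTI} applied in dimension one. The main obstacle is purely bookkeeping in Steps~2 and~3: one must carefully track which correction terms, produced by the auxiliary point $\xi_m$ and the possible non-centering of $m$, survive cancellation with the right-hand side of Lemma \ref{STI1'}, and then recognize the surviving scalar as the square form appearing in $\Phi(m,\mu,\nu)$. No new analytic input beyond Lemma \ref{STI1'} and the one-dimensional optimal transport formalism is required.
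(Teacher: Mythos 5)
Your proposal is correct and follows essentially the same route as the paper: start from the identity that gives inequality \eqref{o}, re-center by $\xi_m\in\argmin V$ so that the integral matches the left side of Lemma \ref{STI1'}, track the scalar correction $\kappa\xi_m(\bary(\mu)+\bary(\nu))-2\kappa\xi_m\bary(m)+\kappa\xi_m^2$, identify $a=T_1(\xi_m)=\alpha_{m,\mu}$, $b=T_2(\xi_m)=\alpha_{m,\nu}$, and rewrite the leftover as the difference of squares; the second assertion is again obtained by combining with Corollary \ref{SSTI}. This is exactly the bookkeeping in the paper's proof, so there is nothing further to add.
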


\begin{proof}
Let $S$ and $T$ be the optimal transport maps from $m$ to $\mu$ and $\nu$, respectively. 
Then it follows from (\ref{o}) that 
\begin{align*}
&\Ent_m(\mu) + \Ent_m(\nu) - \frac{\kappa}{2}W_2^2(\mu,\nu)  \\
&\geq -\int_{\R}\log (S'T') ~dm + \kappa\int_{\R} ST ~dm + \int_{\R}(S(x) + T(x))(V'(x) - \kappa x) ~dm(x) \\
&\hspace{1cm} -2 \int_{\R} xV'(x) ~dm(x) +\kappa\int_{\R} x^2 ~dm(x) \\
&=-\int_{\R}\log (S'T') ~dm + \kappa\int_{\R} ST ~dm + \int_{\R}(S(x) + T(x))(V'(x) - \kappa x+ \kappa \xi_m) ~dm(x) \\
&\hspace{1cm} -2 \int_{\R} xV'(x) ~dm(x) +\kappa\int_{\R} (x - \xi_m)^2 ~dm(x) -\kappa\xi_m\int_{\R}(S+T) ~dm +2\kappa\xi_m\bary(m) -\kappa\xi_m^2 .
\end{align*}
Since $\bary(\mu)=\int_{\R} S ~dm$ and $\bary(\nu)=\int_{\R} T ~dm$, it follows from 
\begin{align*}
&-\kappa\xi_m\int_{\R}(S+T) ~dm +2\kappa\xi_m\bary(m) -\kappa\xi_m^2  \\
&=\kappa\xi_m(2\bary(m) - \bary(\mu) - \bary(\nu))-\kappa\xi_m^2 \\
&=-\kappa\left(\xi_m  - \bary(m) + \frac{\bary(\mu) + \bary(\nu)}{2}\right)^2 + \kappa\left(\bary(m) - \frac{\bary(\mu) + \bary(\nu)}{2}\right)^2 
\end{align*}
and (\ref{l}) that $\Ent_m(\mu) + \Ent_m(\nu) - \kappa W_2^2(\mu,\nu)/2$ is estimated from below by
$-\kappa \Phi(m,\mu,\nu)$, which yields the former inequality. 

The latter claim follows from the former one and Corollary \ref{SSTI}. 
\end{proof}

When $m$ and $\nu$ are symmetric probability measures on $\R$, we have $\xi_m=\bary(m)=\bary(\nu)=\alpha_{m,\nu}=0$, which imply that $\Phi(m,\mu,\nu)=0$. 
Hence, Theorem \ref{rSTI1'} includes Theorem \ref{STI}(1) on the real line. 
On the other hand, when $m=\gamma_1$, for instance under $\bary(\nu)=0$, we have $\Phi(\mu,\nu)=-\alpha_{\gamma_1,\nu}\bary(\mu)  +\alpha_{\gamma_1,\mu}\alpha_{\gamma_1,\nu}$ 
which is negative if $\mu$ satisfies some conditions. 
This fact implies that Theorem \ref{rSTI1'} can give a stronger symmetrized Talagrand inequality than Fathi's one. 
Moreover in this case, we notice that $\alpha_{\gamma_1, \mu}$ (and $\alpha_{\gamma_1, \nu}$) is the L\'{e}vy mean (or median) of $\mu$ (and $\nu$), in other words, 
it holds that $\mu(\{x\in \R ~|~ x\leq \alpha_{\gamma_1, \mu} \})\geq1/2$ and $\mu(\{x\in \R ~|~ x\geq \alpha_{\gamma_1, \mu} \})\geq1/2$. 
\section{Applications to convex geometry}

In this section, we describe two applications of Theorem \ref{rSTI1'}: the concentration of measures and the Blaschke-Santal\'{o} inequality. 
It is well-known that the concentration of measures has deep connections with geometric inequalities such as isoperimetric inequalities, and has applications in many fields. 
The Blaschke-Santal\'{o} inequality is an important and classical inequality in convex geometry. 
These are important in the geometric study of (high-dimensional) Banach spaces (see \cite{BGVV}, \cite{AGM}).

\subsection{Concentration of measures}

There is the following result on the concentration of measures (\cite{V2}): 
When $m = e^{-V}\mathcal{L}^n$ is a probability measure on $\R^n$ with $V \in C^{\infty}(\R^n)$ being $\kappa$-convex for some $\kappa>0$, and 
$A\subset \R^n$ is a Borel subset with $m(A)>0$, then $1-m(A_r) \leq m(A)^{-1} \exp(-\kappa r^2/4)$ holds for any $r>0$, 
where $A_r:=\{x\in\R^n ~|~\|a-x\|_2 \leq r,~\exists a\in A\}$. 
In this subsection, we give a refined version of the concentration of measures on the real line as an application of Theorem \ref{rSTI1'}. 

For simplicity, when $m$ is a probability measure on $\R$, and $A\subset \R$ is a Borel subset with $m(A)>0$, then 
we set $\mu_{m,A}:=m(A)^{-1}m|_A$ and $\bary_m(A):=\bary(\mu_{m,A})= m(A)^{-1}\int_{A}x ~dm(x)$. 
We also set $d(A,B):=\inf\{|a-b| ~|~a\in A,b\in B\}$ for any subsets $A,B\subset\R$. 

\begin{Thm}\label{COM}
Let $m = e^{-V}\mathcal{L}^1$ be a probability measure on $\R$ such that $V \in C^{\infty}(\R)$ is $\kappa$-convex for some $\kappa>0$. 
We also assume that $V$ satisfies $0\in \argmin \{V(x)~|~x\in \R\}$, and set for any Borel subsets $A,B\subset \R$ with $m(A),m(B)>0$, 
$\alpha_{m,A}:=\alpha_{m,\mu_{m,A}}$, $\alpha_{m,B}:=\alpha_{m,\mu_{m,B}}$ and 
\[
\Phi(m,A,B):=-\alpha_{m,B}\bary_m(A) - \alpha_{m,A}\bary_m(B) + \alpha_{m,A}\alpha_{m,B}+(\alpha_{m,A}+\alpha_{m,B})\bary(m).
\] 
Then it holds that
\[
m(A)m(B) \leq \exp\left(-\frac{\kappa }{2}d(A,B)^2 + \kappa\Phi(m,A,B)\right).
\]
In particular, when $m=\gamma_1$, setting 
\[
\Phi(A,B):=\Phi(\gamma_1,A,B)=-\alpha_{\gamma_1,B}\bary_{\gamma_1}(A) - \alpha_{\gamma_1,A}\bary_{\gamma_1}(B) + \alpha_{\gamma_1,A}\alpha_{\gamma_1,B},
\] 
we have 
\[
\gamma_1(A)\gamma_1(B) \leq \exp\left(-\frac{1}{2}d(A,B)^2 + \min\left\{\Phi(A,B), - \bary_{\gamma_1}(A)\bary_{\gamma_1}(B)\right\}\right).
\]
\end{Thm}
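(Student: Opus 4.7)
The plan is to apply the one-dimensional symmetrized Talagrand inequality (Theorem \ref{rSTI1'}) to the conditional measures $\mu_{m,A}$ and $\mu_{m,B}$, following the classical Marton argument that converts transport inequalities into concentration estimates. First I would compute the two quantities appearing in the right-hand side of Theorem \ref{rSTI1'} for this choice. Since $\mu_{m,A}$ has density $m(A)^{-1}\mathbf{1}_A$ with respect to $m$, a direct calculation gives $\Ent_m(\mu_{m,A}) = -\log m(A)$, and likewise for $B$. Next, since any coupling $\pi \in \Pi(\mu_{m,A}, \mu_{m,B})$ is supported in $A \times B$ up to a null set, we have $|x-y| \geq d(A,B)$ $\pi$-a.e., and hence $W_2^2(\mu_{m,A}, \mu_{m,B}) \geq d(A,B)^2$.

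Next I would check that $\Phi(m, \mu_{m,A}, \mu_{m,B})$, as defined in Theorem \ref{rSTI1'}, collapses to the expression $\Phi(m,A,B)$ in the present statement. The hypothesis $0 \in \argmin V$ together with the strict convexity coming from $\kappa>0$ forces $\xi_m = 0$ (so the maximizer of the density $e^{-V}$ is unique and equals $0$). Substituting $\xi_m = 0$, $\bary(\mu_{m,A}) = \bary_m(A)$, $\alpha_{m,\mu_{m,A}} = \alpha_{m,A}$, and the analogous identities for $B$, the penultimate and final terms in the definition of $\Phi(m,\mu,\nu)$ cancel (they differ only by a sign when $\xi_m=0$), while the term $(\alpha_{m,\mu}+\alpha_{m,\nu})(\bary(m)-\xi_m)$ becomes $(\alpha_{m,A}+\alpha_{m,B})\bary(m)$, matching the definition of $\Phi(m,A,B)$.

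Plugging these three ingredients into Theorem \ref{rSTI1'} yields
\[
\tfrac{1}{2}d(A,B)^2 \;\leq\; \tfrac{1}{2}W_2^2(\mu_{m,A},\mu_{m,B}) \;\leq\; \tfrac{1}{\kappa}\bigl(-\log m(A) - \log m(B)\bigr) + \Phi(m,A,B),
\]
and rearranging gives the first inequality after exponentiation. For the sharper statement when $m = \gamma_1$, I would in addition invoke Corollary \ref{SSTI} applied to the same pair $\mu_{\gamma_1,A}, \mu_{\gamma_1,B}$, which produces the alternative bound with $-\bary_{\gamma_1}(A)\bary_{\gamma_1}(B)$ in place of $\Phi(\gamma_1,A,B)$; taking the smaller of the two exponents gives the minimum that appears in the statement.

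There is no real obstacle here beyond careful bookkeeping: the concentration inequality is a routine consequence of the transport inequality once the two entropies are identified as $-\log m(A), -\log m(B)$ and the Wasserstein distance is bounded below by $d(A,B)$. The only step that requires attention is the simplification of $\Phi(m,\mu_{m,A},\mu_{m,B})$ to $\Phi(m,A,B)$, where one has to use the assumption $\xi_m = 0$ to eliminate the two squared terms; this is purely algebraic.
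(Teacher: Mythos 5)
Your proof is correct and takes essentially the same route as the paper: identify $\Ent_m(\mu_{m,A}) = -\log m(A)$, bound $W_2(\mu_{m,A},\mu_{m,B}) \geq d(A,B)$ since every coupling is supported in $A\times B$, verify that the two squared terms in $\Phi(m,\mu,\nu)$ cancel once $\xi_m=0$ so that it reduces to $\Phi(m,A,B)$, and then apply Theorem \ref{rSTI1'} (the paper invokes the latter inequality of Theorem \ref{rSTI1'} directly for the Gaussian case, which is itself derived from Corollary \ref{SSTI}, so your explicit appeal to Corollary \ref{SSTI} amounts to the same thing).
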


\begin{proof}
Substituting $\xi_m=0$ to the former inequality in Theorem \ref{rSTI1'} yields that 
\begin{align}\label{j}
\frac{1}{2}W_2^2(\mu_{m, A}, \mu_{m, B})\leq \frac{1}{\kappa}(\Ent_{m}(\mu_{m, A})+\Ent_{m}(\mu_{m, B})) + \Phi(m,A,B). 
\end{align}
Now, it follows from the definition of $\mu_{m, A}$ that $\Ent_m(\mu_{m, A})=m(A)^{-1}\int_A\log m(A)^{-1} ~dm = -\log m(A)$. 
Similarly, we obtain $\Ent_m(\mu_{m, B})=-\log m(B)$. 
On the other hand, the definition of the Wasserstein distance implies that $W_2(\mu_{m, A},\mu_{m, B})\geq d(A,B)$. 
Hence by (\ref{j}), we have 
\begin{align*}
\frac{1}{2}d(A,B)^2 \leq -\frac{1}{\kappa}(\log m(A) + \log m(B)) +\Phi(m,A,B),
\end{align*}
which yields the first claim. 

The second claim follows from the latter inequality in Theorem \ref{rSTI1'} with the same proof as above. 
\end{proof}

\subsection{The Blaschke-Santal\'{o} inequality}

The classical and geometric Blaschke-Santal\'{o} inequality is as follows: 
for any compact and convex set $K\subset \R^n$ with $0\in\Int(K)$, 
setting $K^{\circ}:=\{x\in \R^n ~|~ \inn{x}{y}\leq 1,~\forall y\in K\}$, we have $\Lc^{n}(K)\Lc^n(K^{\circ})\leq \Lc^n(\mathrm{B}_2^n)^2$ if $\int_{K^{\circ}}x ~dx=0$, 
where $\mathrm{B}_2^n$ is the unit ball in $(\R^n,\|\cdot\|_2)$. 
$K^{\circ}$ coincides with the unit ball of the dual space of $(\R^n,\|\cdot\|_K)$ which is the Banach space whose unit ball is $K$ (where we do not assume the symmetric property of norms). 
This fact naturally implies the relation to the theory of Banach spaces, and in fact it is applied in the local theory of Banach spaces. 

Artstein-Avidan, Klartag and Milman extended this geometric inequality to its functional version (\cite{AKM}), and  
Lehec improved their result with its simpler proof and the condition of its equality (\cite{L}). 
The result of Lehec is as follows: 
For any measurable functions $f,g$ on $\R^n$ satisfying $f(x)+g(y)\leq -\inn{x}{y}$ for any $(x,y)\in\R^n\times\R^n$, 
$\int_{\R^n}e^{f(x)} ~dx < \infty$ and $\int_{\R^n}e^{g(x)} ~dx<\infty$, 
it holds that \[
\int_{\R^n}e^{f(x)} ~dx \int_{\R^n}e^{g(x)} ~dx \leq \left(\int_{\R^n}e^{-\|x\|^2/2} ~dx\right)^2=(2\pi)^n
\] 
if $\int_{\R^n}xe^{f(x)} ~dx=0$. 
When $f(x)=-\|x\|_{K}^2/2$，$g(x)=-\|x\|_{K^{\circ}}^2/2$ where $K\subset \R^n$ is a compact, convex and symmetric subset with $0\in\Int(K)$, 
it is easily checked that Lehec's result yields the classical Blaschke-Santal\'{o} inequality for $K$. 
In \cite{K} and \cite{FM}, 
several extensions of the functional Blaschke-Santal\'{o} inequality are studied. 
In particular, Klartag proved the functional Blaschke-Santal\'{o} inequality, assuming that $f$ and $g$ are even, 
for log-concave probability measures instead of the Lebesgue measure (\cite{K}). 
Moreover, Fathi revealed the dual relation between the functional Blaschke-Santal\'{o} inequality and the symmetrized Talagrand inequality for the standard Gaussian measure (\cite{F}). 
In this subsection, we extend Klartag's result through Fathi's duality on the real line. 

\begin{Thm}\label{A}
Let $m = e^{-V}\mathcal{L}^1$ be a probability measure on $\R$ such that $V \in C^{\infty}(\R)$ is $\kappa$-convex for some $\kappa>0$. 
We assume that $V$ satisfies $0\in \argmin \{V(x)~|~x\in \R\}$. 
Take measurable functions $F,G$ on $\R$ satisfying $0<\int_{\R}e^{\kappa F} ~dm<\infty$, $\int_{\R}x^2e^{\kappa F} ~dm<\infty$, 
$0<\int_{\R}e^{\kappa G} ~dm<\infty$ and $\int_{\R}x^2e^{\kappa G} ~dm<\infty$. 
Set $\alpha_{m,F}:=\alpha_{m,\mu_F}$, $\alpha_{m,G}:=\alpha_{m,\mu_G}$ and 
\[
\Phi(m,F,G):=-\alpha_{m,G}\bary(\mu_F) - \alpha_{m,F}\bary(\mu_G) + \alpha_{m,F}\alpha_{m,G}+(\alpha_{m,F}+\alpha_{m,G})\bary(m),
\] 
where $\mu_F:=e^{\kappa F} m/\int_{\R} e^{\kappa F} ~dm$ and $\mu_G:=e^{\kappa G} m/\int_{\R} e^{\kappa G} ~dm$. 
If $F(x) + G(y) \leq (x-y)^2/2$ holds for any $(x,y)\in \R\times\R$, then we have 
\begin{align}\label{k}
\int_{\R}e^{\kappa F} ~dm\int_{\R}e^{\kappa G} ~dm\leq e^{\kappa\Phi(m,F,G)}.
\end{align}
In particular, when $m=\gamma_1$, 
setting 
\[
\Phi(F,G):=\Phi(\gamma_1,F,G)=-\alpha_{\gamma_1,G}\bary(\mu_F) - \alpha_{\gamma_1,F}\bary(\mu_G) + \alpha_{\gamma_1,F}\alpha_{\gamma_1,G},
\] 
if $F(x) + G(y) \leq (x-y)^2/2$ holds for any $(x,y)\in \R\times\R$, then 
\begin{align}\label{kk}
\int_{\R}e^{F} ~d\gamma_1\int_{\R}e^{G} ~d\gamma_1\leq \exp\left( \min\{\Phi(F,G),-\bary(\mu_F)\bary(\mu_G)\}\right).
\end{align}
\end{Thm}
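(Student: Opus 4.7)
The plan is to exploit the Legendre-type duality between the symmetrized Talagrand inequality (Theorem \ref{rSTI1'}) and the functional Blaschke-Santaló inequality, implemented through the Kantorovich duality (Theorem \ref{KD}). The natural test measures, suggested by the exponential form of the conclusion, are the tilted probability measures $\mu_F$ and $\mu_G$ already defined in the statement. The integrability hypotheses $\int_\R x^2 e^{\kappa F}\,dm<\infty$ and $\int_\R x^2 e^{\kappa G}\,dm<\infty$ ensure $\mu_F,\mu_G\in P_2(\R)$, so Theorem \ref{rSTI1'} applies to them.

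First I would perform the routine calculation of the relative entropies,
\[
\Ent_m(\mu_F)=\kappa\int_\R F\,d\mu_F-\log\!\int_\R e^{\kappa F}\,dm,\qquad \Ent_m(\mu_G)=\kappa\int_\R G\,d\mu_G-\log\!\int_\R e^{\kappa G}\,dm.
\]
Next, the hypothesis $F(x)+G(y)\le (x-y)^2/2$, doubled to $2F(x)+2G(y)\le(x-y)^2$, combined with Theorem \ref{KD} applied to $(\mu_F,\mu_G)$, yields
\[
\int_\R F\,d\mu_F+\int_\R G\,d\mu_G\;\le\;\frac{1}{2}W_2^2(\mu_F,\mu_G).
\]
Finally, Theorem \ref{rSTI1'} applied to the pair $(\mu_F,\mu_G)$ bounds the right-hand side above by $\kappa^{-1}(\Ent_m(\mu_F)+\Ent_m(\mu_G))+\Phi(m,\mu_F,\mu_G)$.

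Chaining the three displays and substituting the formulas for the entropies makes the integrals $\int F\,d\mu_F$ and $\int G\,d\mu_G$ cancel exactly, leaving
\[
0\;\le\;-\frac{1}{\kappa}\!\left(\log\!\int_\R e^{\kappa F}\,dm+\log\!\int_\R e^{\kappa G}\,dm\right)+\Phi(m,\mu_F,\mu_G).
\]
Under the hypothesis $0\in\argmin V$ we have $\xi_m=0$, so the last two squared terms in the definition of $\Phi(m,\mu,\nu)$ from Theorem \ref{rSTI1'} cancel, and the remaining expression agrees with $\Phi(m,F,G)$ as defined in the statement. Exponentiating yields \eqref{k}. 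For the $\gamma_1$ case, the argument is identical with $\kappa=1$, except that one invokes the sharper form of Theorem \ref{rSTI1'} that includes $\min\{\Phi(\mu,\nu),-\bary(\mu)\bary(\nu)\}$, yielding \eqref{kk}.

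The main obstacle is not analytic but organizational: checking that $F$ and $G$ are genuinely integrable against $\mu_F$ and $\mu_G$ so that Theorem \ref{KD} applies in the stated $L^1$ form, and verifying that the quantity $\Phi(m,F,G)$ in the statement coincides with $\Phi(m,\mu_F,\mu_G)$ from Theorem \ref{rSTI1'} under $\xi_m=0$. Integrability follows from the pointwise bounds $F(x)\le x^2/2-G(0)$ and $G(x)\le x^2/2-F(0)$ (take $y=0$ in the cost constraint) together with the second-moment hypotheses, since $|t\,e^{\kappa t}|$ is bounded below uniformly on $\R$ and above by a multiple of $x^2 e^{\kappa F}$ on the relevant range.
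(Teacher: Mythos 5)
Your proof is correct and follows essentially the same route as the paper's: express $\Ent_m(\mu_F)$ and $\Ent_m(\mu_G)$ via the tilting formula, bound $\tfrac12 W_2^2(\mu_F,\mu_G)$ from below by $\int F\,d\mu_F+\int G\,d\mu_G$ using Kantorovich duality, and from above via Theorem \ref{rSTI1'}, then watch the integrals of $F$ and $G$ cancel. You are in fact slightly more careful than the paper in two small respects: you spell out that $0\in\argmin V$ forces $\xi_m=0$ (so the squared terms in $\Phi(m,\mu,\nu)$ from Theorem \ref{rSTI1'} cancel and the formula collapses to the stated $\Phi(m,F,G)$), and you sketch why $F\in L^1(\mu_F)$, $G\in L^1(\mu_G)$ (the negative parts are controlled because $te^{\kappa t}$ is bounded below, the positive parts by the second-moment hypotheses via $F(x)\le x^2/2-G(0)$); the paper merely asserts these integrability facts.
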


\begin{proof}
Note that $\mu_F,\mu_G\in P_2(\R)$, and $\int_{\R}F ~d\mu_F<\infty$ and $\int_{\R}G ~d\mu_G<\infty$ since $F(x) + G(y) \leq (x-y)^2/2$ for any $(x,y)\in \R\times\R$. 
Then by the former inequality in Theorem \ref{rSTI1'}, we obtain 
\begin{align}\label{jj}
\frac{1}{2}W_2^2(\mu_F, \mu_G)\leq \frac{1}{\kappa}(\Ent_{m}(\mu_F)+\Ent_{m}(\mu_G)) + \Phi(m,F,G).
\end{align}
Now, it follows from the definition of $\mu_F$ that 
\[
\Ent_m(\mu_F)=\int_{\R}\log\left(\left(\int_{\R}e^{\kappa F} ~dm\right)^{-1}e^{\kappa F}\right) ~d\mu_F =\kappa\int_{\R}F ~d\mu_F - \log \int_{\R}e^{\kappa F} ~dm.
\]
Similarly, we obtain 
\[
\Ent_m(\mu_G) =\kappa\int_{\R}G ~d\mu_G - \log \int_{\R}e^{\kappa G} ~dm.
\]
On the other hand, by the Kantorovich duality (Theorem \ref{KD}), we obtain 
\[
\frac{1}{2}W_2^2(\mu_F, \mu_G) \geq \int_{\R}F ~d\mu_F + \int_{\R}G ~d\mu_G. 
\]
Hence, it follows from (\ref{jj}) that 
\[
\int_{\R}F ~d\mu_F + \int_{\R}G ~d\mu_G \leq \frac{1}{\kappa}\left(\kappa\int_{\R}F ~d\mu_F - \log \int_{\R}e^{\kappa F} ~dm + \kappa\int_{\R}G ~d\mu_G - \log \int_{\R}e^{\kappa G} ~dm\right) +\Phi(m,F,G),
\]
which yields the desired inequality. 

The latter claim follows from the latter inequality in Theorem \ref{rSTI1'} with the same proof as above. 
\end{proof}

When $m$ satisfies $\int_{\R}e^{\kappa x^2/2} ~dm<\infty$, setting $\widetilde{m}:=e^{\kappa x^2/2}m/\int_{\R}e^{\kappa x^2/2} ~dm$, $f(x):=F(x)-x^2/2$ and $g(x):=G(x)-x^2/2$, 
the conditions on $F,G$ are equivalent to $f(x) + g(y) \leq -xy$ for any $(x,y)\in \R\times\R$, and since $\int_{\R}e^{-\kappa x^2/2} ~d\widetilde{m} =\left(\int_{\R}e^{\kappa x^2/2} ~dm\right)^{-1}$, 
$(\ref{k})$ can be represented as 
\[
\int_{\R}e^{\kappa f} ~d\widetilde{m}\int_{\R}e^{\kappa g} ~d\widetilde{m}\leq \left(\int_{\R}e^{-\frac{\kappa}{2}x^2 + \frac{\kappa}{2}\Phi(m,F,G)} ~d\widetilde{m}\right)^2.
\]
Let $\widetilde{V}$ be the function on $\R$ satisfying $\widetilde{m}=e^{-\widetilde{V}}\Lc^1$. Then $\widetilde{V}''\geq 0$ holds, and hence $\widetilde{m}$ is a log-concave probability measure. 
In particular, if $V$ and $F$ (or $V$ and $G$) are even, it follows that $\Phi(m,F,G)=0$, which implies Klartag's result. 


\end{document}